\def\ge{\geqslant}
\def\le{\leqslant}
\def\geq{\geqslant}
\def\leq{\leqslant}
\renewcommand{\Re}{\mathop{\mathrm{Re}}}
\renewcommand{\Im}{\mathop{\mathrm{Im}}}
\newcommand{\C}{{\mathbb{C}}}
\newcommand{\R}{{\mathbb{R}}}
\newcommand{\eps}{{\varepsilon}}
\newcommand{\ph}{{\varphi}}
\newcommand{\HHH}{\mathcal{H}}
\newcommand{\KKK}{\mathcal{K}}
\newcommand{\Rmnum}[1]{\expandafter\@slowromancap\romannumeral #1@}
\theoremstyle{plain}
\newtheorem{theorem}{Theorem}
\newtheorem{proposition}[theorem]{Proposition}
\newtheorem{property}[theorem]{Property}
\newtheorem{lemma}[theorem]{Lemma}
\theoremstyle{definition}
\newtheorem{remark}[theorem]{Remark}
\numberwithin{equation}{section}
\numberwithin{theorem}{section}
\numberwithin{equation}{section}
\begin{document}


\title[Blow up versus scattering for the focusing NLH with potential]{Blow up versus scattering below the mass-energy threshold for the focusing NLH with potential}

\author[S. Ji]{Shuang Ji}
\address{College of Science, China Agricultural University, \ Beijing, \ China, \ 100193,}
\email{jishuang@cau.edu.cn}

\author[J. Lu]{Jing Lu}
\address{College of Science, China Agricultural University, \ Beijing, \ China, \ 100193, }
\email{lujing@cau.edu.cn}

\subjclass[2000]{Primary 35Q55; Secondary 47J35}

\date{\today}

\keywords{NLH with a potential, scatter, global well-posedness, blow up.}

\begin{abstract}\noindent
In this paper, we study the blow up and scattering result of the solution to the focusing nonlinear Hartree equation with potential
$$i\partial_t u +\Delta u - Vu = - (|\cdot|^{-3} \ast |u|^2)u, \qquad (t, x) \in \R \times \R^5 $$
in the energy space ${H}^1(\R^5)$ below the mass-energy threshold. 
The potential $V$ we considered is an extension of Kato potential in some sense.  
We extend the results of Meng  \cite{meng} to nonlinear Hartree equation with potential $V$ under some conditions. 
By establishing a Virial-Morawetz estimate and a scattering criteria, we obtain the  scattering theory based on the method from Dodson-Murphy \cite{dodson}. 
\end{abstract}

\maketitle

\section{Introduction}
In this paper, we consider the Cauchy problem of the following nonlinear Hartree equation with potential:
 \begin{equation}\label{NLHv}\tag{$\text{NLH}_{\text{V}}$}
   \left\{ \aligned
   & i\partial_t u +\Delta u - Vu = \mu (|\cdot|^{-\gamma} \ast |u|^2)u, \\
   & u(0)=u_0\in {H}^1(\R^d),
   \endaligned
  \right. \qquad (t, x) \in \R \times \R^d
\end{equation}
where $ u: \R \times \R^d \rightarrow \C $ is the wave function, $ V : \R^d \to \R $ is a  potential, $ \mu \ne 0 $, $ 0 < \gamma < d $, and $ \ast $ denotes the convolution of spacial variable. 
\eqref{NLHv} is divided into two cases according to the sign of $ \mu $, which is called focusing if $ \mu < 0 $ and defocusing if $ \mu > 0 $.

The solution to \eqref{NLHv} satisfies the laws of mass conservation and energy conservation, which can be expressed respectively by 
\begin{equation}\label{mass}
	M(u) = \int_{\R^d} \vert u(t,x) \vert ^2 dx = M(u_0),
\end{equation}
\begin{equation}\label{energy}
	E(u) = \frac{1}{2} \int_{\R^d} \big( \vert \nabla u \vert ^2 + V |u|^2 \big) dx + \frac{\mu}{\gamma+1}\int_{\R^d}\int_{\R^d} \frac{|u(x)|^2 |u(y)|^2}{|x-y|^{\gamma}}dxdy = E(u_0).
\end{equation}

Before showing our main theorem, we first recall some known results for the nonlinear Hartree equation without potential, that is
 \begin{equation}\label{NLH}\tag{$\text{NLH}$}
	\left\{ \aligned
	& i\partial_t u +\Delta u = \mu (|\cdot|^{-\gamma} \ast |u|^2)u,  \\
	& u(0)=u_0\in {H}^1(\R^d).
	\endaligned
	\right. \qquad (t,x)\in \R\times\R^d,
\end{equation}
\eqref{NLH} enjoys the scaling symmetry
\begin{equation}\label{scaling}
	u(t,x) \mapsto u^\lambda(t,x) : = \lambda^{\frac{d+2-\gamma}{2}} u(\lambda^2t, \lambda x).
\end{equation}
This symmetry identifies $\dot H_x^{s_{c}}(\R^d)$ as the scaling-critical space of initial data, where $s_{c}=\frac{\gamma}{2}-1$. When  $s_c<0$ (or $\gamma<2$), we call \eqref{NLH} as \emph{mass-subcritical} problem. When  $s_c=0$ (or $\gamma=2$), we call \eqref{NLH} as \emph{mass-critical} problem. When  $0 < s_c < 1$ (or  $2<\gamma<4$), we call \eqref{NLH} as  \emph{energy-subcritical} problem. When  $s_c=1$ (or $\gamma=4$), we call \eqref{NLH} as \emph{energy-critical} problem.

Formally, the solution of \eqref{NLH} satisfies the conservation of mass and energy:
\begin{equation*}\label{mass1}
	M_0(u) = \int_{\R^d} \vert u(t,x) \vert ^2 dx = M_0(u_0),
\end{equation*}
\begin{equation*}\label{energy1}
	E_0(u) = \frac{1}{2} \int_{\R^d} \vert \nabla u \vert ^2 dx + \frac{\mu}{\gamma+1}\int_{\R^d}\int_{\R^d} \frac{|u(x)|^2 |u(y)|^2}{|x-y|^{\gamma}}dxdy = E_0(u_0).
\end{equation*}

The Hartree equation is an important dispersive equation with famous scientific 
research background. Its study  arises in  Boson stars theory, and  can be described as  a continuous-limit model for mesoscopic molecular structures    in Chemistry. There has been a dramatic increase in the research of nonlinear Hartree equation \eqref{NLH}. 
The well-posedness, ill-posedness and the small scattering result  of \eqref{NLH} in $H^1(\R^d)$ have been studied by Miao-Xu-Zhao in \cite{cauchy} with $0<\gamma<d$. 
\begin{itemize}
    \item For the defocusing ones: In the energy-subcritical case, Miao-Xu-Zhao obtained  the scattering theory by I-method in \cite{energysubcritical}. 
    Later, Miao-Xu-Zhao in \cite{gwpandscatter} obtained the global well-posedness and scattering for energy-critical Hartree equation using the concentration compactness argument from \cite{KM}. 
    For the energy-critical case, Li-Miao-Zhang \cite{limiaozhang} established the small data scattering result for \eqref{NLH} in $d\ge5$. 
    Later, Miao-Xu-Zhao \cite{mxzblowup} got the scattering and blow up results for spherically symmetric initial data. 
    \item For the focusing ones: For the mass-critical case, Miao-Xu-Zhao \cite{masscritical} drew our attention to the scattering result for \eqref{NLH} in the radial solution under the condition that the initial data are strictly less than that of ground state. 
    Miao-Wu-Xu \cite{miaowuxu} studied the dynamics of the radial solutions with the threshold energy by the moving plane method in the global form of a positive solution. 
    In particular, Gao-Wu in \cite{gao} got the scattering result  and the blow up result for the $\dot{H}^{\frac{1}{2}}$-critical \eqref{NLH} with initial data in energy space $H_x^1$. 
    Later, using the method from Dodson-Murphy \cite{dodson}, Meng in \cite{meng} gave a new proof of the scattering below the ground state that avoids the use of concentration compactness in \cite{gao}. 
\end{itemize}

We summarize their result as follows: 
\begin{theorem}[\cite{meng}]\label{Meng-Gao-Wu}
    For $ (\mu, \gamma, d) = (-1, 3, 5) $ and $ V \equiv 0 $ in \eqref{NLHv}, if  $u_0 \in H^1(\R^5)$ is radially symmetric satisfying
    \begin{equation*}\label{ground}
		E(u_0)M(u_0)<E(Q)M(Q), \quad\quad || u_0||_{\dot{H}^1}||u_0||_{L^2}<||\nabla Q||_{L^2}||Q||_{L^2},
    \end{equation*}
    then the global solution scatters in $H^1$  in both time directions, where $Q$ is the unique radial Schwartz solution to
    \begin{equation}\label{elliptic}
    -\Delta Q+Q=(|\cdot|^{-3}\ast|Q|^2)Q.
    \end{equation}
\end{theorem}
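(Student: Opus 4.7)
The plan is to follow the Dodson--Murphy road map used by Meng, which replaces the concentration/compactness ingredient of Gao--Wu by a direct virial-type Morawetz bound combined with a scattering criterion. I would organise the proof around four steps.

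\textbf{Step 1 (Variational analysis and global bounds).} Starting from the sharp Gagliardo--Nirenberg-type inequality associated with \eqref{elliptic} (with extremiser $Q$), the two sub-threshold assumptions propagate in time: by continuity of $t\mapsto \|\nabla u(t)\|_{L^2}\|u(t)\|_{L^2}$ together with conservation of mass and energy, one obtains
\[
\|\nabla u(t)\|_{L^2}\|u(t)\|_{L^2}<\|\nabla Q\|_{L^2}\|Q\|_{L^2}\qquad \text{for all }t\in I_{\max},
\]
together with a coercivity (``energy trapping'') inequality of the form
\[
\|\nabla u(t)\|_{L^2}^2-\tfrac{5}{8}\!\iint\tfrac{|u(t,x)|^2|u(t,y)|^2}{|x-y|^3}\,dx\,dy\;\ge\;\delta\,\|\nabla u(t)\|_{L^2}^2,
\]
for some $\delta>0$ depending only on the ratio to the threshold. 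This gives a uniform $H^1$ bound and, through the local theory of \cite{cauchy}, global existence.

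\textbf{Step 2 (Scattering criterion).} Following Dodson--Murphy, I would establish that if the radial global solution satisfies
\[
\|u\|_{L^4_tL^{r}_x(J\times\R^5)}<\eta
\]
on every sufficiently long time interval $J$, for a suitable admissible pair $(4,r)$ and a small constant $\eta=\eta(\|u_0\|_{H^1})$, then a stability/perturbation argument applied to the associated Duhamel integral together with dispersive/Strichartz estimates for $e^{it\Delta}$ yields finite global Strichartz norms and hence scattering in $H^1$. The nonlocal term $(|\cdot|^{-3}\ast|u|^2)u$ is handled via Hardy--Littlewood--Sobolev, so this is structurally the same as Dodson--Murphy.

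\textbf{Step 3 (Virial--Morawetz estimate).} Let $\phi_R$ be a smooth radial cutoff equal to $|x|^2/2$ on $\{|x|\le R\}$, flattening outside to have bounded second derivatives and $\Delta^2\phi_R\lesssim R^{-2}\mathbf 1_{|x|>R}$. For the localised virial
\[
M_R(t):=2\,\Im\!\int_{\R^5}\overline{u}\,\nabla\phi_R\cdot\nabla u\,dx,
\]
a standard computation gives $\tfrac{d}{dt}M_R(t)$ as the sum of a positive quadratic-in-$\nabla u$ term, a positive potential term matching the nonlinear part of the energy on $\{|x|\le R\}$, and error terms supported in $\{|x|>R\}$. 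The coercivity of Step~1 upgrades the main term to control a nonlocal quantity
\[
\int_J\!\iint_{|x|\le R}\!\tfrac{|u(t,x)|^2|u(t,y)|^2}{|x-y|^3}\,dx\,dy\,dt\;\lesssim\; R+(\text{errors}),
\]
while the errors, after splitting the convolution by $|x|\gtrless R$ and $|y|\gtrless R$, are tamed by the radial Sobolev decay $\|u\|_{L^\infty(|x|>R)}\lesssim R^{-2}\|u\|_{H^1}$ and Step~1 bounds. Averaging in $J$ gives a Morawetz-type smallness on long intervals.

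\textbf{Step 4 and main difficulty.} Feeding the averaged Morawetz bound into the criterion of Step~2, with an appropriate tuning $R=R(\eta)$, completes the proof. The hard part is the Morawetz step: the Hartree convolution couples distant points, so the virial error analysis is more delicate than for a local nonlinearity. The mixed region where one integration variable is inside $\{|x|\le R\}$ and the other is outside must be handled using both the Hardy--Littlewood--Sobolev inequality and the radial $L^\infty$ decay, and one needs to confirm that the Morawetz gain dominates all such errors uniformly under the sub-threshold coercivity of Step~1. Once this is in place, the scattering conclusion follows as in \cite{dodson,meng}.
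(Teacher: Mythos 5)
Your road map is essentially the one the paper itself follows: this theorem is only quoted from \cite{meng}, but the paper's proof of its own Theorem \ref{conclusion}(i) specialises to it when $V\equiv 0$, and that proof runs exactly through your four steps (coercivity from the sharp Gagliardo--Nirenberg inequality, a scattering criterion, a truncated virial--Morawetz estimate, and the combination of the last two). Two points are worth flagging. First, your coercivity constant is off: the virial identity gives $\tfrac{d}{dt}M_R(t)=8\|\nabla u\|_{L^2}^2-6P(u)+(\text{errors})=8\bigl(\|\nabla u\|_{L^2}^2-\tfrac34 P(u)\bigr)+(\text{errors})$ with $P(u)=\iint |u(x)|^2|u(y)|^2|x-y|^{-3}\,dx\,dy$, so the quantity that must be coerced is $\|\nabla u\|_{L^2}^2-\tfrac34P(u)$ (see Property \ref{coer} and Lemma \ref{kulemma}); the $\tfrac58$ version you wrote is strictly weaker and does not by itself imply positivity of the $\tfrac34$ combination, although the sub-threshold hypotheses do yield the correct $\tfrac34$ coercivity directly. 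Second, your scattering criterion is the interval--Strichartz-smallness variant, whereas the paper (following Dodson--Murphy) uses the ``small mass in a ball at large times'' version, $\lim_{t\to\infty}\int_{B(0,R)}|u(t,x)|^2\,dx\le\eps^{1+}$, and bridges it to the Morawetz bound through the elementary pointwise inequality $\|\chi_{R_0}u(t)\|_{L^2}^4\le 8R_0^3\,P(\chi_R u)(t)$ for $R_0\ll R$; this makes the passage from the time-averaged Morawetz estimate to the criterion immediate, while your variant would require an extra interpolation and epsilon-management step that you leave unspecified in Step 4. Neither point is fatal, but both should be repaired before the argument closes.
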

%

In this paper, we assume the potential $ V $ in \eqref{NLHv} satisfies the following hypothesis. 
\begin{enumerate}
\item[(\text{\textbf{H1}})]\label{H1}  
 $ V : \R^d \to \R $ is real-valued and $V \in L^{\frac{d}2}( \R^d )$,
\item[(\text{\textbf{H2}})]\label{H2} the operator $ \HHH := -\Delta + V $ has no eigenvalues. 
\end{enumerate}
\begin{remark} A natural question is that why the potential $V$ should satisfy $\mathbf{(H1)}$ and $\mathbf{(H2)}$?    
    \begin{itemize}
        \item A typical potential satisfying $\mathbf{(H1)}$ and $\mathbf{(H2)}$ is Kato potential under some conditions. 
        It follows from \cite{Hong} that if the potential $V$ satisfies 
        \begin{equation}\label{v2}
        V \in \KKK_0 \cap L^{\frac{d}2}(\R^d), \quad d=3
        \end{equation} 
        with the Kato norm
$
\Vert V \Vert_{\KKK_0} := \sup_{x \in \R^d} \int_{\R^d} |V(y)| |x-y|^{2-d} dy 
$ 
and 
\begin{equation}\label{v1}
\Vert V_- \Vert_{\KKK_0} < 4 \pi
\end{equation}
for $ V_- := \min \{ V, 0 \} $, then the Schr\"odinger operator $ \HHH$ is positive definite when the negative part of a potential is small (It has been proved in Lemma A.1 \cite{Hong}). Since  $ \HHH$  is positive, and thus it has no negative eigenvalue. 
Besides, there is no positive eigenvalue or resonance by the theory of \cite{IJ}. 
Therefore, in this paper we just need $\mathbf{(H2)}$ instead of the conditions $V \in \KKK_0$ and $\Vert V_- \Vert_{\KKK_0} < 4 \pi$. In some sense, the potential we considered is an extension of Kato potential.
   \item Because of the absence of the potential $V$, then the standard Sobolev norms and the Sobolev norms associated with $ \HHH$ are different. 
   So we should find the relations of them. 
   Thus it is necessary to establish the norm equivalence. As seen in $\mathbf{(H1)}$, we need $V \in L^{\frac{d}2}( \R^d )$. 
   This condition is just used to prove the Sobolev inequalities and equivalence of Sobolev spaces, which will be used in the whole paper(see Lemma \ref{si}, \ref{equivalence} for details).
\end{itemize}
\end{remark} 

 In recent decades, there are also some work to study the nonlinear Schr\"odinger equations with potential $V$ satisfying \eqref{v2} and \eqref{v1}, 
\begin{equation}\label{NLSv}\tag{$ \text{NLS}_{\text{V}} $}
   \left\{ \aligned
   & i\partial_t u +\Delta u - Vu = \mu |u|^{\alpha}u, \\
   & u(0)=u_0\in {H}^1(\R^3),
   \endaligned
  \right. \qquad (t, x) \in \R \times \R^3
\end{equation} 
The local well-posedness result and the energy scattering of \eqref{NLSv}  in the  defocusing case were proved by \cite{Hong}.  
For the focusing case, the energy scattering of \eqref{NLSv} below the threshold was first proved by \cite{Hong} with $\alpha=2$ using the concentration compactness argument of Kenig-Merle in \cite{KM}. 
Later, using the method form Murphy-Dodson \cite{dodson}, \cite{masaru1} extended the results in \cite{Hong} to the intercritical case $\frac43<\alpha<4$ with radial symmetric initial data. 
Besides, they prove the blow up criteria for the equation by applying the argument of Du-Wu-Zhang in \cite{DWZ}. For the other kinds of potentials, such as the inverse square potential and Coulomb potential, see \cite{AA,ben,boni,BPS,campos,chen,dynamics,GWY,masaru,KMVZZ,KMVZZ1,KMVZ,orbital,cj,coulomb,mizutani,ZZ,Z} for details.

In this paper, we study the scattering of \eqref{NLHv} and extend Meng's result of focusing ($ \mu = -1 $) energy-subcritical ($ s_c = \frac12 $) Hartree equation in 5D in \cite{meng} to the one with potential $ V $. 
Besides, we consider the blow up theory about the equation \eqref{NLHv}. For convenience, we define the Hilbert space with the inner product
\begin{equation}
	\|\nabla_V  f\|_{L_x^2}^2:=<\HHH f,f>=\int | \nabla f|^2dx+\int V|f|^2dx,\quad f\in H_x^1(\R^d).
\end{equation}
Then our main results are as follows. 
\begin{theorem}\label{conclusion}
For $ (\mu, \gamma, d) = (-1, 3, 5) $ in \eqref{NLHv}, we assume $ V \ge 0 $ and satisfies $\mathbf{(H1)}$ and $\mathbf{(H2)}$. 
Let $ u_0 \in H^1 $  and satisfies
\begin{equation}\label{u0q}
E(u_0)M(u_0) < E_0(Q)M_0(Q),
\end{equation}	
where $ Q $ is the solution to \eqref{elliptic}.
  \begin{enumerate}[\rm (i)]
      \item {\rm (Global existence and Scattering)} If $ u_0$ is radial and
	\begin{equation}\label{lambda0q}
		||\nabla_V u_0||_{L^2}||u_0||_{L^2}<||\nabla Q||_{L^2}||Q||_{L^2},
	\end{equation}
	then  the solution u to \eqref{NLHv} exists globally in time and satisfies
	\begin{equation}\label{lambdatq}
		||\nabla_V u(t)||_{L^2}||u(t)||_{L^2}<||\nabla Q||_{L^2}||Q||_{L^2},
	\end{equation}
	for all $t \in \R$. 
        Moreover, if $\ x \cdot \nabla V \leq 0 $ and $ x \cdot \nabla V \in L^{\frac52} $, then the global solution scatters in $H^1$  in both time directions, that is, there exist $u_\pm \in H^1$ such that
	\begin{equation}\label{inf}
		\lim_{t \to \pm\infty }||u(t)-e^{-it\HHH}u_\pm||_{H^1}=0.
	\end{equation}
	\item {\rm (Blow up)} If
	\begin{equation}\label{lambda0>q}
		||\nabla_V u_0||_{L^2}||u_0||_{L^2}>||\nabla Q||_{L^2}||Q||_{L^2},
	\end{equation}
	then the solution u to \eqref{NLHv}  satisfies
	\begin{equation}\label{lambdat>q}
		||\nabla_V u(t)||_{L^2}||u(t)||_{L^2}>||\nabla Q||_{L^2}||Q||_{L^2},
	\end{equation}
	for all $t \in (-T_*,T^*)$, where $(-T_*,T^*)$ is the maximal time interval of existence. Moreover, if $\ x \cdot  \nabla V \in L^{\frac{5}{2}},\  2V+ x\cdot \nabla V \geq 0$, then either $T^*<+\infty$ and 
    $$\lim_{t \to T^*}\Vert \nabla u(t)\Vert_{L^2}=\infty,$$  
 or $T^*=+\infty$ and there exists a time sequence $t_n \to +\infty$ such that $$\lim_{n \to +\infty } \Vert \nabla u(t_n)\Vert_{L^2}=\infty.$$ A similar conclusion holds for $T_*$.
  \end{enumerate}
\end{theorem}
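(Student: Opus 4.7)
\medskip
\noindent\textbf{Proof proposal.} My plan is to follow the Dodson--Murphy strategy as in \cite{dodson,meng,masaru1}, adapted to accommodate the potential $V$, with two separate arguments for parts (i) and (ii).

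For part (i), I would first establish the energy trapping \eqref{lambdatq}. Let $g(t) = \|\nabla_V u(t)\|_{L^2}\|u(t)\|_{L^2}$. Since $V \geq 0$ we have $E_0(u(t)) \leq E(u(t))$ and $\|\nabla u\|_{L^2} \leq \|\nabla_V u\|_{L^2}$, while the sharp Gagliardo--Nirenberg-type inequality for the Hartree convolution nonlinearity (characterised by $Q$) yields a two-branch lower bound for $E_0$ in terms of $\|\nabla u\|_{L^2}\|u\|_{L^2}$. Combined with the strict subthreshold assumptions \eqref{u0q}--\eqref{lambda0q}, a standard continuity argument forbids $g(t)$ from crossing $\|\nabla Q\|_{L^2}\|Q\|_{L^2}$, producing \eqref{lambdatq} and, by local well-posedness in $H^1$, global existence. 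For scattering I would: (a) establish a Dodson--Murphy-type scattering criterion saying that if the Duhamel tail from large times is small in an appropriate Strichartz norm then $u$ scatters; (b) prove a Virial--Morawetz estimate with radial truncated weight $a_R(x) = R\,\psi(|x|/R)$, $\psi'(r)=r$ for $r \leq 1$. Differentiating twice the generalised virial $M_R(t) = 2\,\mathrm{Im}\int \nabla a_R \cdot \nabla u\,\bar u\,dx$ produces a coercive nonlinear contribution (via the trapping and a localised Gagliardo--Nirenberg), a non-negative kinetic term, a radial-Strauss-type error from the boundary of the cutoff, and a potential contribution of the form $-2\int \nabla a_R \cdot \nabla V\,|u|^2\,dx$; the hypothesis $x\cdot\nabla V \leq 0$ gives this term a favourable sign on $|x|\leq R$, while $x\cdot\nabla V \in L^{5/2}$ controls the tail via H\"older and the equivalence of the Sobolev norms associated with $\HHH$. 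Averaging the resulting Morawetz inequality in time feeds the scattering criterion.

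For part (ii), the analogous energy trapping \eqref{lambdat>q} follows from the same two-branch sharp Gagliardo--Nirenberg and continuity, since the supercritical and subcritical sides are separated by the forbidden value $\|\nabla Q\|_{L^2}\|Q\|_{L^2}$. For blow-up without finite variance, I would adapt the non-radial truncated-virial argument of Du--Wu--Zhang \cite{DWZ}. Pick a radial cutoff $\phi_R$ with $\phi_R(x) = |x|^2$ on $|x|\leq R$ and quadratic-type decay outside, set $I_R(t) = \int \phi_R |u|^2 dx$, and compute
\begin{equation*}
I_R''(t) \leq 16\,E(u_0) - 4\int \bigl(2V + x\cdot\nabla V\bigr)|u|^2\,dx - \delta + \mathrm{err}(R,u(t)),
\end{equation*}
where $-\delta<0$ comes from the coercivity produced by \eqref{lambdat>q} together with the Hartree Pohozaev identity for $Q$, the potential contribution is non-positive by $2V+x\cdot\nabla V \geq 0$, and the error (from the region $|x|\geq R$) is handled by $\|x\cdot\nabla V\|_{L^{5/2}}\|u\|_{H^1}^{2}$-type bounds as long as $\|\nabla u\|_{L^2}$ stays bounded. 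A standard dichotomy then yields either finite-time blow-up with $\|\nabla u(t)\|_{L^2}\to\infty$, or a sequence $t_n\to+\infty$ along which $\|\nabla u(t_n)\|_{L^2}\to\infty$, since otherwise $I_R''(t) \leq -\delta/2$ for large $R$ would contradict the a priori bound $|I_R'(t)| \lesssim R\,\|\nabla u(t)\|_{L^2}\|u_0\|_{L^2}$ on a sufficiently long interval.

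The main obstacles I expect are twofold. First, in part (i), ensuring that the Virial--Morawetz estimate closes under the mere pointwise hypothesis $x\cdot\nabla V \leq 0$ and the integrability $x\cdot\nabla V \in L^{5/2}$: the coercivity must survive the competition between the nonlinear term, the radial boundary error, and the potential remainder on $|x|>R$, which requires carefully exploiting the Sobolev-norm equivalence granted by $V\in L^{d/2}$ and \textbf{(H2)} (Lemma \ref{equivalence}). Second, in part (ii), verifying that the cutoff error in the truncated virial genuinely sits below the coercivity $\delta$ over a controllable time window; this is the standard delicate step in the Du--Wu--Zhang scheme, and I anticipate that the weak decay of $V$ will force a careful calibration of $R$ relative to the time interval on which the blow-up alternative is tested.
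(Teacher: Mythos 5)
Your proposal follows essentially the same route as the paper: energy trapping via the sharp Gagliardo--Nirenberg inequality for the Hartree nonlinearity and a continuity argument (Property \ref{coer}), scattering via a Dodson--Murphy scattering criterion (Proposition \ref{scattercriterion}) combined with a radial truncated Virial--Morawetz estimate (Proposition \ref{morawetz}) in which $x\cdot\nabla V\le 0$ gives the good sign on $|x|\le R$ and $x\cdot\nabla V\in L^{5/2}$ controls the tail, and blow-up via the Du--Wu--Zhang truncated virial with the uniform bound $K(u(t))\le-\delta$ obtained from $2V+x\cdot\nabla V\ge 0$ (Lemma \ref{kulemma}, Proposition \ref{blowupcriteria}). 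The plan is correct and matches the paper's proof in all essential steps.
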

\begin{remark}Compared with \cite{masaru1}, our difficulty lies in the fact that Hartree
equation \eqref{NLHv} has a nonlocal term. And we extend the result in Meng \cite{meng} to the nonlinear Hartree equation with a potential. A natural question is that the dynamics of the solution to the equation \eqref{NLHv}
when beyond the mass-energy threshold. We will answer this problem in the later paper. \end{remark}
%

$\mathit{\textbf{The~sketch~of~scattering:}}$  In order to prove the scattering theory, we first prove a scattering criteria (see Section 3 for details). That is if $u \in H^1$ is a solution to  \eqref{NLHv} with $(\mu,\gamma,d) =(-1,3,5)$ satisfying
	$\sup_t \Vert u(t) \Vert_{H^1} :=\mathcal{E} < +\infty,$
	then there exist two constants $R>0$  and $\eps > 0$, depending only on $\mathcal{E}$, such that if
	\begin{equation}\label{scatter-1}
		\lim_{t \to \infty} \int_{B(0,R)}|u(t,x)|^2dx \leq \eps^{1+},
	\end{equation}
	then $u$ scatters forward in time in $H^1(\R^5)$.  Thus, our aim is to prove \eqref{scatter-1} holds. Indeed, for a fixed $0<R_0\ll R$, we have
\begin{align*}
	\Vert \chi_{R_0}u(t)\Vert_{L_x^2(\R^5)}^4 &= \int_{\R^5}|\chi_{R_0}u(t,x)|^2dx\int_{\R^5}|\chi_{R_0}u(t,y)|^2dy\\ \notag
	&\leq 8 R_0^3\int_{|x|\leq R_0}\int_{|y|\leq R_0}\frac{|u(t,x)|^2|u(t,y)|^2}{|x-y|^3}dxdy\\ \notag
	&\leq 8 R_0^3\int_{|x|\leq R}\int_{|y|\leq R}\frac{|u(t,x)|^2|u(t,y)|^2}{|x-y|^3}dxdy\\ \notag
	&=8R_0^3P(\chi_Ru).
\end{align*}
Making use of the Morawetz estimate (see Proposition \ref{morawetz} for details), we deduce
\begin{align*}
	\frac{1}{T}\int_{0}^{T}\Vert \chi_{R_0}u(t)\Vert_{L_x^2(\R^5)}^4 dt& \lesssim \frac{R_0^3}{T}\int_{0}^{T}P(\chi_Ru)dt\\ \notag
	&\lesssim_{\delta,u} \frac{R_0^3}{T}+\frac{R_0^3}{R}+o_R(1) \to 0,\  when \ T,R \to \infty,
\end{align*}
which means \eqref{scatter-1} holds. So we derive that $u$ scatters in $H_x^1$.

$\mathit{\textbf{The~sketch~of~blow~up:}}$
 If $u_0 \in L^2(|x|^2dx)$, then the solution will blow up in finite time by the classical method from Glassey in \cite{glassey}. If $u_0 \in H_x^1$, then we make use of the blow up criteria (see Proposition \ref{blowupcriteria} for details), we deduce that $u$ will blow up in finite or infinite time in the sense of $$\lim_{n \to +\infty } \Vert \nabla u(t_n)\Vert_{L^2}=\infty.$$ 

\begin{remark} 
    The condition that the initial data  $u_0$ is radial is applied to the proof of Proposition \ref{scattercriterion} (scattering criteria) and Proposition \ref{morawetz} (Morawetz estimate). 
    The positivity $V \geq 0$ is used to prove Property \ref{coer} (coercivity), Proposition \ref{morawetz} (Morawetz estimate) and Proposition \ref{blowupcriteria}  (blow up criteria).
    The condition $x \cdot \nabla V \leq 0$  is used to prove Proposition \ref{morawetz} (Morawetz estimate) to obtain the boundedness of the Strichartz norm. 
    The condition $2V + x \cdot \nabla V \geq 0$ is employed to establish Lemma \ref{kulemma} and Proposition \ref{blowupcriteria} (blow up criteria).  
    The condition $x \cdot \nabla V \in L^{\frac{5}{2}}$ is significant to establish the key results related to Proposition \ref{morawetz} (Morawetz estimate) and Proposition \ref{blowupcriteria} (blow up criteria).
\end{remark}

\textbf{Outline of the paper:} In Section 2, we introduce some useful inequalities, including Sobolev inequalities and equivalence  of Sobolev spaces. In Section 3, we concentrate on local well-posedness and the properties ground state. In Section 4, we give the proof of scattering by establishing scattering criteria and Morawetz estimate. In Section 5,  we consider 
 the blow up criteria in order to prove blow up result.
 
\section{Preliminaries}
Firstly, we introduce the notation and several fundamental lemmas needed in this paper.
The notation $A\lesssim B$ means that
${A}\leqslant{CB}$ for some constant $C>0$. Likewise, if ${A}\lesssim{B}\lesssim{A}$,
we say that ${A}\thicksim{B}$. We use $L^{r}_{x}(\mathbb{R}^d)$ to denote the Lebesgue
space of functions $f:\mathbb{R}^{d}\rightarrow{\mathbb{C}}$ with norm
$$\|f\|_{L^{r}}:=\Big(\int_{\mathbb{R}^d}|f(x)|^{r}dx\Big)^{\frac{1}{r}}$$
finite, with the usual modifications when $r=\infty$.
We also use the space-time Lebesgue spaces $L^{q}_{t}L^{r}_{x}$ which are equipped with the norm
$$\|f\|_{L^{q}_{t}L^{r}_{x}}:=\Big(\int_{I}\|f\|^{q}_{L^r_{x}}dt\Big)^{\frac{1}{q}}$$
for any space-time slab $I\times{\mathbb{R}^d}$.

\subsection{Some useful inequalities}
In this subsection, we will show some fundamental inequalities, which will be essential for the following sections.
\begin{lemma}[H\"older's inequality, \cite{holder}]\label{holder}
	Let $\Omega \subset \R^d$ is an open set, $f\in L^p(\Omega)$, $g \in L^{p'}(\Omega)$, $1\leq p \leq +\infty$. Then $f \cdot g \in L^1(\Omega)$, and
	\[\|fg\|_{L^1(\Omega)} \leq \|f\|_{L^p(\Omega)}\|g\|_{L^{p'}(\Omega)} ,\]
	where $(p,p')$ are conjugate pairs i.e. $\frac{1}{p}+\frac{1}{p'}=1$.
\end{lemma}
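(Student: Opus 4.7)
The plan is to prove Hölder's inequality by the classical normalization-plus-Young argument. First I would dispose of the degenerate cases. If $p=1$ and $p'=\infty$, the bound follows from the pointwise a.e. estimate $|f(x)g(x)| \leq |f(x)|\,\|g\|_{L^\infty}$ and integration (symmetrically when $p=\infty$, $p'=1$). If $\|f\|_{L^p}=0$ or $\|g\|_{L^{p'}}=0$, then $fg=0$ almost everywhere and both sides vanish; if either norm is $+\infty$, the inequality is trivial. So it suffices to treat $1<p<\infty$ with $0<\|f\|_{L^p},\|g\|_{L^{p'}}<\infty$.

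The key scalar ingredient is Young's inequality: for $a,b\ge 0$ and conjugate exponents $p,p'$,
\[
ab \leq \frac{a^p}{p} + \frac{b^{p'}}{p'}.
\]
I would derive this from the concavity of $\log$ on $(0,\infty)$: for $a,b>0$,
\[
\log\!\left(\tfrac{1}{p}a^p + \tfrac{1}{p'}b^{p'}\right) \geq \tfrac{1}{p}\log a^p + \tfrac{1}{p'}\log b^{p'} = \log(ab),
\]
and exponentiating gives the claim, while the cases $a=0$ or $b=0$ are immediate.

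With Young in hand, I normalize: set $F(x) := f(x)/\|f\|_{L^p}$ and $G(x) := g(x)/\|g\|_{L^{p'}}$, so $\|F\|_{L^p}=\|G\|_{L^{p'}}=1$. Apply Young pointwise with $a=|F(x)|$, $b=|G(x)|$:
\[
|F(x)G(x)| \leq \tfrac{1}{p}|F(x)|^p + \tfrac{1}{p'}|G(x)|^{p'} \quad \text{for a.e. } x\in\Omega.
\]
Measurability of the product $FG$ is automatic. Integrating the above over $\Omega$,
\[
\int_{\Omega} |F(x)G(x)|\,dx \leq \tfrac{1}{p}\|F\|_{L^p}^{p} + \tfrac{1}{p'}\|G\|_{L^{p'}}^{p'} = \tfrac{1}{p} + \tfrac{1}{p'} = 1,
\]
which, after multiplying through by $\|f\|_{L^p}\|g\|_{L^{p'}}$, yields $fg\in L^1(\Omega)$ together with the asserted bound $\|fg\|_{L^1}\le \|f\|_{L^p}\|g\|_{L^{p'}}$.

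There is no substantive obstacle, since the statement is a standard textbook result. The only points requiring care are verifying measurability of $fg$ (automatic for measurable $f,g$) and handling the endpoint cases $p\in\{1,\infty\}$ and the vanishing/infinite norm cases separately so that the normalization step is legitimate.
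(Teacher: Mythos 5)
Your proof is correct and is essentially the classical argument (Young's inequality via concavity of $\log$, followed by normalization) found in the textbook \cite{holder} that the paper cites for this lemma without reproducing a proof. The careful handling of the endpoint cases $p\in\{1,\infty\}$ and of vanishing or infinite norms makes the normalization step fully legitimate, so nothing is missing.
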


\begin{lemma}[Hardy-Littlewood-Sobolev inequality, \cite{hls}]\label{hls}
	If $1 < p < q < +\infty$, $0 < \gamma < d$ and
	\[\frac{1}{q}=\frac{1}{p}+\frac{\gamma}{d}-1,\]
	then
	\[\||\cdot|^{-\gamma} \ast f\|_{L^q(\R^d)} \lesssim \|f\|_{L^p(\R^d)}.\]
\end{lemma}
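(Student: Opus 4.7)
The plan is to prove this classical inequality by establishing a weak-type $(p,q)$ bound on the Riesz potential and then invoking the Marcinkiewicz interpolation theorem. Write $Tf(x) = (|\cdot|^{-\gamma} \ast f)(x)$, and split the kernel as $K = K_1 + K_2$ where $K_1 = |x|^{-\gamma} \mathbf{1}_{|x| \leq R}$ and $K_2 = |x|^{-\gamma} \mathbf{1}_{|x| > R}$ for a parameter $R > 0$ to be chosen. The goal is to show the weak-type estimate
\[
\bigl|\{x \in \R^d : |Tf(x)| > \lambda\}\bigr|^{1/q} \lesssim \frac{\|f\|_{L^p}}{\lambda}
\]
for $f \in L^p(\R^d)$.

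First I would compute, using polar coordinates, that $K_1 \in L^1(\R^d)$ with $\|K_1\|_{L^1} \sim R^{d-\gamma}$ (using $\gamma < d$), and that $K_2 \in L^{p'}(\R^d)$ with $\|K_2\|_{L^{p'}} \sim R^{d/p' - \gamma}$ provided $\gamma p' > d$, which corresponds to $q < \infty$. Then Young's inequality gives $\|K_1 \ast f\|_{L^p} \lesssim R^{d-\gamma}\|f\|_{L^p}$, while Hölder gives $\|K_2 \ast f\|_{L^\infty} \lesssim R^{d/p'-\gamma}\|f\|_{L^p}$. The crucial observation from the scaling hypothesis $1/q = 1/p + \gamma/d - 1$ is that $d/p' - \gamma = -d/q$, so that $K_2 \ast f$ is $L^\infty$-small when $R$ is large.

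The next step is to choose $R$ depending on $\lambda$ to balance the two pieces. Setting $R^{d/p'-\gamma}\|f\|_{L^p} \sim \lambda$, i.e.\ $R \sim (\|f\|_{L^p}/\lambda)^{q/d}$, makes $\|K_2 \ast f\|_{L^\infty} \leq \lambda/2$, so $\{|Tf| > \lambda\} \subset \{|K_1 \ast f| > \lambda/2\}$. Chebyshev's inequality then yields
\[
\bigl|\{|K_1 \ast f| > \lambda/2\}\bigr| \lesssim \lambda^{-p} \|K_1 \ast f\|_{L^p}^p \lesssim \lambda^{-p} R^{(d-\gamma)p} \|f\|_{L^p}^p,
\]
and substituting the chosen $R$ gives precisely $(\|f\|_{L^p}/\lambda)^q$ after the exponents collapse using the HLS scaling relation — this is where the algebraic identity $p/q + (d-\gamma)p/d = 1$ is used. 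This establishes the weak-type $(p,q)$ bound.

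Finally, to pass from weak-type to strong-type one applies the Marcinkiewicz interpolation theorem. Pick two exponents $p_1 < p < p_2$ with associated conjugates $q_1, q_2$ satisfying the same HLS relation; the weak-type $(p_i, q_i)$ bounds proved above then interpolate to the strong $(p,q)$ estimate, since $1 < p < q < \infty$ allows one to choose $p_1, p_2$ in the admissible range where $d/p_i' > \gamma$. The main technical point, and the only place the strict inequalities $1 < p$ and $q < \infty$ are used, is guaranteeing that $K_2 \in L^{p_i'}$ so that the weak-type estimates are available at two endpoints straddling $p$; the endpoint cases $p = 1$ or $q = \infty$ genuinely fail and are correctly excluded by the hypotheses.
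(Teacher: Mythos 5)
Your proof is correct, but there is nothing internal to compare it against: the paper does not prove this lemma at all — it is quoted as a classical result with a citation to the Miao--Zhang harmonic analysis textbook. Your argument is the standard one (essentially Stein's proof of the Hardy--Littlewood--Sobolev theorem): split the Riesz kernel at radius $R$, bound the near piece $K_1$ by Young's inequality and the far piece $K_2$ by H\"older, then optimize $R \sim (\|f\|_{L^p}/\lambda)^{q/d}$ to get the weak-type $(p,q)$ bound, and finish with Marcinkiewicz interpolation along the scaling line. Your exponent bookkeeping checks out: $d/p'-\gamma=-d/q$ and $p/q+(d-\gamma)p/d=1$ are both equivalent to the relation $\frac1q=\frac1p+\frac{\gamma}{d}-1$, and the Chebyshev step does collapse to $(\|f\|_{L^p}/\lambda)^q$ exactly. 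One small inaccuracy in your closing commentary: the strict inequality $p>1$ is not in fact needed for the weak-type estimate, since at $p=1$ one has $p'=\infty$ and $K_2\in L^\infty$ with $\|K_2\|_{L^\infty}=R^{-\gamma}$, so the operator is of weak type $(1,d/\gamma)$ and you could take $p_1=1$ as the lower interpolation endpoint; what genuinely fails at $p=1$ (and at $q=\infty$) is only the strong-type bound, which Marcinkiewicz does not require at the endpoints. The constraint that matters for your upper endpoint is $p_2<d/(d-\gamma)$, which keeps $q_2<\infty$ and is available precisely because $q<\infty$. None of this affects the validity of the proof, only the explanation of where the hypotheses enter.
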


\begin{lemma}[Gagliardo-Nirenberg inequality, \cite{gn}]\label{gn}
	Let $1 \leq q,r \leq +\infty$, $0 < \sigma <s <+\infty$, then
	\[\|f\|_{\dot{W}^{\sigma,p}} \lesssim \|f\|_{L^q}^{\theta}\|f\|_{\dot{W}^{s,r}}^{1-\theta},\]
	where $\frac{1}{p}=\frac{\theta}{q}+\frac{1-\theta}{r}$ and $\sigma=s(1-\theta).$
\end{lemma}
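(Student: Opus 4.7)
My plan is to prove this interpolation inequality via a homogeneous Littlewood--Paley decomposition combined with Bernstein's inequality, with a frequency cutoff chosen to balance the two competing estimates. First I would pass to the equivalent formulation $\|f\|_{\dot{W}^{\sigma,p}} \sim \|(-\Delta)^{\sigma/2} f\|_{L^p}$ and decompose $f = \sum_{j \in \Z} \Delta_j f$ via the homogeneous Littlewood--Paley projectors $\Delta_j$ adapted to frequencies of size $2^j$. Using the elementary identity $\|(-\Delta)^{\sigma/2} \Delta_j g\|_{L^p} \sim 2^{j\sigma}\|\Delta_j g\|_{L^p}$, I would reduce to estimating
\begin{equation*}
\|f\|_{\dot{W}^{\sigma,p}} \lesssim \sum_{j \in \Z} 2^{j\sigma} \|\Delta_j f\|_{L^p}.
\end{equation*}

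Next I would estimate each $\|\Delta_j f\|_{L^p}$ two different ways. Bernstein's inequality supplies the ``low-frequency'' bound $\|\Delta_j f\|_{L^p} \lesssim 2^{jd(1/q - 1/p)}\|f\|_{L^q}$ and the ``high-frequency'' bound $\|\Delta_j f\|_{L^p} \lesssim 2^{-js}\cdot 2^{jd(1/r - 1/p)}\|(-\Delta)^{s/2} f\|_{L^r}$. I would then fix a cutoff $J \in \Z$ and use the first bound on $\{j \le J\}$ and the second on $\{j > J\}$. The two scaling relations $\sigma = s(1-\theta)$ and $1/p = \theta/q + (1-\theta)/r$ force the exponent on $2^j$ in the low-frequency sum to be strictly positive and the one in the high-frequency sum to be strictly negative, so both geometric series converge, producing contributions of the form $2^{J\alpha}\|f\|_{L^q}$ and $2^{-J\beta}\|f\|_{\dot{W}^{s,r}}$ for suitable $\alpha,\beta>0$. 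Optimizing over $J\in\Z$ (balancing the two terms) delivers exactly the weighted geometric-mean bound $\|f\|_{L^q}^\theta \|f\|_{\dot{W}^{s,r}}^{1-\theta}$ with the correct exponents.

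The main obstacle is that the Bernstein bounds above are only available when $q \leq p$ and $r \leq p$, whereas the hypothesis $1/p = \theta/q + (1-\theta)/r$ only forces $p$ to lie between $q$ and $r$. When $q > p$ or $r > p$, the naive Bernstein step must be replaced by the Littlewood--Paley square function characterization $\|f\|_{\dot{W}^{\sigma,p}} \sim \bigl\|\bigl(\sum_j 2^{2j\sigma}|\Delta_j f|^2\bigr)^{1/2}\bigr\|_{L^p}$ combined with Fefferman--Stein's vector-valued maximal inequality, or by complex interpolation applied to the analytic family of Riesz potentials $(-\Delta)^{z/2}$; these routes are more technical but recover the full range. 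The endpoint cases $\theta\in\{0,1\}$ are trivial, and the case $p=\infty$ requires the standard modification using BMO in the interpolation scale.
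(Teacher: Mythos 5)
The paper itself gives no proof of this lemma --- it is quoted directly from the textbook \cite{gn} --- so your proposal can only be measured against the standard literature argument, and your Littlewood--Paley framework is indeed the standard one. The problem is where the weight of your argument actually sits. The two-sided Bernstein scheme at the core of your sketch requires $q \le p$ \emph{and} $r \le p$ simultaneously, but since $1/p = \theta/q + (1-\theta)/r$ with $\theta \in (0,1)$ places $p$ strictly between $q$ and $r$ whenever $q \neq r$, exactly one of the two Bernstein bounds is unavailable in \emph{every} nondegenerate instance; the scheme as written closes only when $q = p = r$. So what you describe as an obstacle arising ``when $q > p$ or $r > p$'' is not an edge case but the generic case, and the ``patch'' is the entire theorem. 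Note also that the obvious substitute --- block-wise H\"older, $\|\Delta_j f\|_{L^p} \le \|\Delta_j f\|_{L^q}^{\theta}\|\Delta_j f\|_{L^r}^{1-\theta}$ --- produces a bound that is \emph{constant} in $j$ (precisely because $\sigma = s(1-\theta)$ makes everything scale-invariant), so it controls each block but gives no summability; a genuine two-bound dichotomy per block is indispensable, and H\"older alone cannot supply it.

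The repairs you name are correct, and the maximal-function route can be made concrete enough to carry the whole proof uniformly in $1 < q, r \le \infty$, without ever changing integrability on blocks: use the pointwise bounds $|\Delta_j f(x)| \lesssim Mf(x)$ and $|\Delta_j f(x)| \lesssim 2^{-js} M\bigl((-\Delta)^{s/2}f\bigr)(x)$, where $M$ is the Hardy--Littlewood maximal operator (the second follows since $\Delta_j(-\Delta)^{-s/2}$ has kernel $2^{-js}$ times an $L^1$-normalized Schwartz bump at scale $2^{-j}$). Performing your balancing step pointwise rather than in norm, with $\theta = 1 - \sigma/s$,
\begin{equation*}
\sum_{j\in\Z} 2^{j\sigma}\,|\Delta_j f(x)| \;\lesssim\; \bigl(Mf(x)\bigr)^{\theta}\,\Bigl(M\bigl((-\Delta)^{s/2}f\bigr)(x)\Bigr)^{1-\theta},
\end{equation*}
and then a \emph{single} application of H\"older with exponents $q/\theta$ and $r/(1-\theta)$, together with the $L^q$ and $L^r$ boundedness of $M$, yields the inequality; passing from $\|f\|_{\dot{W}^{\sigma,p}}$ to this sum costs only the standard bound $|(-\Delta)^{\sigma/2}\Delta_j f| \lesssim 2^{j\sigma} M(\tilde{\Delta}_j f)$ with fattened projectors, i.e.\ an iterated maximal function, which changes nothing. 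Finally, your endpoint caution is apt and should be extended: the lemma as stated allows $q, r \in \{1, \infty\}$, where this family of inequalities is delicate ($L^1$ and $L^\infty$ must typically be replaced by the Hardy space and BMO); the maximal-function argument requires $q, r > 1$, which covers all exponents actually used in this paper.
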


\begin{lemma}[Interpolation inequality]\label{interpolation}
Let $1 < p,p_0,p_1 < +\infty$ and $\theta \in [0,1]$, then it holds that 
\[
\Vert u \Vert_{L^p} \lesssim \Vert u \Vert_{L^{p_0}}^{1-\theta} \Vert u \Vert_{L^{p_1}}^\theta
\]
where $\frac{1}{p}=\frac{1-\theta}{p_0}+ \frac{\theta}{p_1}$.
\end{lemma}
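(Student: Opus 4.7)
The plan is to derive this inequality as a direct consequence of Hölder's inequality (Lemma \ref{holder}), which is already stated in the excerpt. First I would dispose of the boundary cases $\theta = 0$ and $\theta = 1$, in which the claimed estimate reduces to $\Vert u \Vert_{L^p} \lesssim \Vert u \Vert_{L^{p_0}}$ or $\Vert u \Vert_{L^p} \lesssim \Vert u \Vert_{L^{p_1}}$ with $p = p_0$ or $p = p_1$; both hold trivially (in fact with constant $1$). I may assume $\theta \in (0,1)$ in the rest of the argument.

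The key algebraic step is to split the integrand multiplicatively so that each factor naturally carries one of the norms on the right. Writing
\[
|u(x)|^{p} = |u(x)|^{p(1-\theta)} \cdot |u(x)|^{p\theta},
\]
I would choose the conjugate exponents $q_0 := \frac{p_0}{p(1-\theta)}$ and $q_1 := \frac{p_1}{p\theta}$. Their reciprocals sum to $\frac{p(1-\theta)}{p_0} + \frac{p\theta}{p_1} = p\bigl(\frac{1-\theta}{p_0} + \frac{\theta}{p_1}\bigr) = p \cdot \frac{1}{p} = 1$, where the last equality uses precisely the hypothesis on $p, p_0, p_1, \theta$. Hence $(q_0, q_1)$ is a Hölder-conjugate pair, and both are strictly greater than $1$ since $p_0, p_1 \in (1, \infty)$ and $\theta \in (0,1)$.

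Applying Lemma \ref{holder} to the product $|u|^{p(1-\theta)} \cdot |u|^{p\theta}$ with the pair $(q_0, q_1)$ gives
\[
\int_{\R^d} |u(x)|^{p}\, dx \leq \Bigl( \int_{\R^d} |u(x)|^{p(1-\theta) q_0} dx \Bigr)^{1/q_0} \Bigl( \int_{\R^d} |u(x)|^{p\theta q_1} dx \Bigr)^{1/q_1} = \Vert u \Vert_{L^{p_0}}^{p(1-\theta)} \, \Vert u \Vert_{L^{p_1}}^{p\theta},
\]
since by construction $p(1-\theta)q_0 = p_0$ and $p\theta q_1 = p_1$. Taking the $1/p$-th power on both sides yields the claimed estimate, in fact with implicit constant equal to $1$.

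There is really no serious obstacle here; the only point requiring care is the bookkeeping that verifies $(q_0, q_1)$ is a conjugate pair, which is exactly where the scaling relation $\frac{1}{p} = \frac{1-\theta}{p_0} + \frac{\theta}{p_1}$ is consumed. The finiteness hypotheses $1 < p_0, p_1 < \infty$ are used only to ensure $q_0, q_1 \in (1, \infty)$ so that Lemma \ref{holder} applies; if either $\Vert u \Vert_{L^{p_0}}$ or $\Vert u \Vert_{L^{p_1}}$ is infinite the inequality is vacuous, and otherwise the computation above is valid for any measurable $u$.
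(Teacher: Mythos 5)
Your proof is correct, but it takes a genuinely different route from the paper's. The paper does not actually prove the lemma: its ``proof'' consists of citing Lemma 2.3 of \cite{interpolation} (a Sobolev-space interpolation estimate) and specializing the regularity parameters to $s=s_1=0$, so the Lebesgue-norm interpolation falls out as the derivative-free case of a more general result. You instead give a self-contained elementary argument: split $|u|^p=|u|^{p(1-\theta)}\cdot|u|^{p\theta}$ and apply the paper's own Lemma \ref{holder} with the pair $q_0=\frac{p_0}{p(1-\theta)}$, $q_1=\frac{p_1}{p\theta}$, the hypothesis $\frac{1}{p}=\frac{1-\theta}{p_0}+\frac{\theta}{p_1}$ being exactly the statement that $\frac{1}{q_0}+\frac{1}{q_1}=1$. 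Your bookkeeping is sound: the endpoint cases $\theta\in\{0,1\}$ are disposed of first, and for $\theta\in(0,1)$ the relation $\frac{1}{p}>\frac{1-\theta}{p_0}$ (since $\theta>0$ and $p_1<\infty$) indeed forces $q_0>1$, and symmetrically $q_1>1$, so H\"older applies; the finiteness caveat when a right-hand norm is infinite is also handled correctly. What your approach buys over the paper's citation: an explicit constant equal to $1$ rather than an unspecified implicit constant, validity over an arbitrary measure space, and independence from the Sobolev machinery of \cite{interpolation}; what the paper's route buys is only brevity.
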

\begin{proof}
Refer to Lemma 2.3 in \cite{interpolation}, we let $s=s_1=0$. Thus we obtain the inequality.
\end{proof}


\begin{lemma}[Radial Sobolev embedding, \cite{gao}]\label{rse}
	Let $f\in H_x^1(\R^d)$ be radically symmetric. Then
	\[\||x|^{\frac{d-1}{2}}f\|^2_{L_x^{\infty}}\lesssim \|f\|_{H_x^1} \| \nabla f \|_{L^2}.\]
\end{lemma}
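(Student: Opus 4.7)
\textbf{Proof proposal for Lemma \ref{rse}.}

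The plan is to reduce the claim to a one-dimensional fundamental-theorem-of-calculus computation on the radial profile, then apply Cauchy–Schwarz. By a standard density argument it suffices to establish the inequality for $f\in C_c^\infty(\R^d)$ that is radial; the conclusion then extends to $H^1_x(\R^d)$ since both sides are continuous in the $H^1$ norm (the left side is bounded by a continuous seminorm via the pointwise estimate we will produce). Write $f(x)=g(r)$ with $r=|x|$, so that $|\nabla f(x)|=|g'(r)|$ almost everywhere and, in radial coordinates,
\begin{equation*}
\int_{\R^d} |f|^2\,dx = \omega_{d-1}\int_0^\infty |g(r)|^2 r^{d-1}\,dr,\qquad \int_{\R^d} |\nabla f|^2\,dx = \omega_{d-1}\int_0^\infty |g'(r)|^2 r^{d-1}\,dr,
\end{equation*}
where $\omega_{d-1}$ is the surface area of the unit sphere in $\R^d$.

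Next, for each $r>0$, I would use the fundamental theorem of calculus together with the compact support of $g$ to write
\begin{equation*}
r^{d-1}|g(r)|^2 = -\int_r^\infty \frac{d}{ds}\bigl(s^{d-1}|g(s)|^2\bigr)\,ds = -\int_r^\infty \bigl[(d-1)s^{d-2}|g(s)|^2 + 2s^{d-1}\Re(\bar g(s)g'(s))\bigr]\,ds.
\end{equation*}
The crucial observation is that the first term inside the integral is pointwise nonnegative, so dropping it only enlarges the right-hand side (after accounting for the overall minus sign):
\begin{equation*}
r^{d-1}|g(r)|^2 \leq 2\int_r^\infty s^{d-1}|g(s)||g'(s)|\,ds.
\end{equation*}
Applying Cauchy–Schwarz to the $s^{d-1}$-weighted integral and extending the region of integration back to $(0,\infty)$ then yields
\begin{equation*}
r^{d-1}|g(r)|^2 \leq 2\Bigl(\int_0^\infty s^{d-1}|g(s)|^2\,ds\Bigr)^{1/2}\Bigl(\int_0^\infty s^{d-1}|g'(s)|^2\,ds\Bigr)^{1/2} = \frac{2}{\omega_{d-1}}\|f\|_{L^2}\|\nabla f\|_{L^2}.
\end{equation*}

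Taking the supremum over $r>0$ produces $\||x|^{(d-1)/2}f\|_{L^\infty}^2 \lesssim \|f\|_{L^2}\|\nabla f\|_{L^2}$, which is in fact slightly stronger than the stated inequality and implies it because $\|f\|_{L^2}\le\|f\|_{H^1}$. A final density step (approximating a radial $H^1$ function by radial Schwartz functions, for which the pointwise bound above holds uniformly) transfers the estimate to all radial $f\in H^1_x(\R^d)$. I do not anticipate a real obstacle: the only delicate point is recognizing the favorable sign of the $(d-1)s^{d-2}|g|^2$ contribution so that it can be discarded, which is what gives a clean Cauchy–Schwarz bound without having to control $\int r^{d-2}|g|^2$ separately.
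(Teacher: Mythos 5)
Your proposal is correct and follows essentially the same route as the standard proof in the cited reference \cite{gao} (the classical Strauss radial lemma): express $r^{d-1}|g(r)|^2$ by the fundamental theorem of calculus, discard the nonnegative $(d-1)s^{d-2}|g(s)|^2$ contribution, and conclude by Cauchy--Schwarz. In fact you prove the slightly stronger estimate $\||x|^{\frac{d-1}{2}}f\|_{L^\infty}^2 \lesssim \|f\|_{L^2}\|\nabla f\|_{L^2}$, which immediately implies the stated inequality since $\|f\|_{L^2}\leq \|f\|_{H^1}$.
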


\begin{lemma}[Continuity method, \cite{meng}]\label{continuity}
	Let $a,b>0$, $p>1$, $b<\frac{a}{(2a)^p}$. Then it holds that
	\[
	f(x)=a-x+bx^p<0, \quad for \ some\  x>0.
	\]
	Suppose $I \subset \R$ is a interval, $\psi(t) \in C(I;\R^+)$ satisfying
	\[
	\psi(t) \leq a+b\psi^p(t), \quad \forall \ t \in I.
	\]
	If $\psi(t_0)\leq x_0$, $t_0 \in I$, then
	\[
	\psi(t) \leq x_0, \quad \forall \  t \in I,
	\]
	where $x_0$ is the first positive zero of $f(x)$.
\end{lemma}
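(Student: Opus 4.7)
The plan is to first verify the existence claim about $f$, then carry out a standard continuity (bootstrap) argument using connectedness of $I$.

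For the first assertion, evaluate $f$ at $x = 2a$: $f(2a) = a - 2a + b(2a)^p = -a + b(2a)^p$. The hypothesis $b < a/(2a)^p$ gives $b(2a)^p < a$, hence $f(2a) < 0$. This already establishes the existence claim. Since $f(0) = a > 0$ and $f$ is continuous with $f''(x) = bp(p-1)x^{p-2} > 0$ on $(0,\infty)$, the function $f$ is strictly convex on $(0,\infty)$, so it has at most two positive zeros. Combined with $f(0) > 0$ and $f(2a) < 0$, we conclude that $f$ has exactly two zeros $0 < x_0 < x_1$ on $(0,\infty)$, with $f \geq 0$ on $[0, x_0] \cup [x_1, \infty)$ and $f < 0$ on $(x_0, x_1)$. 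This structural fact is the key input for the bootstrap.

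Next I would rewrite the differential/integral inequality satisfied by $\psi$ as a pointwise statement about $f$: the assumption $\psi(t) \leq a + b\psi(t)^p$ is equivalent to $f(\psi(t)) \geq 0$ for every $t \in I$. Hence, for each $t \in I$, the value $\psi(t)$ lies in the closed set $[0, x_0] \cup [x_1, \infty)$. Define
\[
S := \{\, t \in I : \psi(t) \leq x_0 \,\}.
\]
The hypothesis $\psi(t_0) \leq x_0$ shows $t_0 \in S$, so $S$ is nonempty. Since $\psi$ is continuous, $S = \psi^{-1}([0, x_0])$ is closed in $I$.

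Finally, I would show $S$ is also open in $I$, so that $S = I$ by connectedness. Fix $t \in S$, so $\psi(t) \leq x_0 < x_1$. By continuity of $\psi$, there is a neighborhood $U$ of $t$ in $I$ on which $\psi(s) < x_1$. But $\psi(s)$ must lie in $[0, x_0] \cup [x_1, \infty)$, and $\psi(s) < x_1$ forces $\psi(s) \leq x_0$, i.e., $U \subset S$. Thus $S$ is open, closed, and nonempty in the connected set $I$, which gives $S = I$ and proves the conclusion $\psi(t) \leq x_0$ for all $t \in I$. The only subtle point — and the place where the strict inequality $b < a/(2a)^p$ is essential — is ensuring the strict separation $x_0 < x_1$, which creates the ``gap'' $(x_0, x_1)$ that a continuous curve cannot cross. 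Beyond this, the argument is routine and requires no further estimates.
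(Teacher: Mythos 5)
Your proof is correct: evaluating $f$ at $x=2a$, using strict convexity of $f$ to get exactly two positive zeros $x_0<x_1$ with $f<0$ precisely on the gap $(x_0,x_1)$, rewriting the hypothesis as $f(\psi(t))\ge 0$, and running the open-closed connectedness bootstrap is exactly the standard continuity-method argument; the paper itself states this lemma without proof (citing \cite{meng}), and your argument is the same canonical proof. The only point left implicit is that $f(x)\to+\infty$ as $x\to\infty$ (from $b>0$, $p>1$), which is what guarantees the second zero $x_1>2a$ exists — one line would make this explicit.
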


\subsection{Harmonic analysis of operator $ \HHH $}
The Fourier transform on $\R^d$ is defined by
\begin{align*}
	\hat{f}(\xi):=(2\pi)^{-\frac{d}{2}}\int_{\R^d}e^{-ix\cdot\xi}f(x)\,dx,
\end{align*}
giving rise to the fractional differentiation operator $|\nabla|^s$, defined by
\begin{align*}
	|\nabla|^sf(x):=\mathcal{F}^{-1}_{\xi}(|\xi|^s\hat{f}(\xi))(x).
\end{align*}
Thus define the standard homogeneous Sobolev space norms:
\begin{align*}
	\|f\|_{\dot{W}_x^{s,p}(\R^d)}:=\||\nabla|^sf\|_{L_x^p(\R^d)}.
\end{align*}
We define the norms of the homogeneous and inhomogeneous Sobolev spaces associated to $\HHH := -\Delta +V$ as the closure of $C_0^{\infty}(\R^d)$
\[
\|f\|_{\dot{W}_V^{s,r}}:=\|\nabla_V^{s}f \|_{L^r}, \quad
\|f\|_{W_V^{s,r}}:=\| \langle \nabla_V \rangle^{s}f \|_{L^r}, \quad
\nabla_V:=\sqrt{\HHH}, 
\]
where $ \langle a \rangle := \sqrt{1 + a^2} $. 
Then we denote $\dot{H}_V^{s}:=\dot{W}_V^{s,2}$ and $H_V^{s}:=W_V^{s,2}$.

Taking this definition, we have the following Lemma \ref{si} and Lemma \ref{equivalence} which show us the equivalence of the standard Sobolev space and the Sobolev spaces associated with $\HHH$.  The Sobolev inequalities and equivalence of Sobolev spaces are crucial and are frequently used in the whole paper.
\begin{lemma}[Sobolev inequalities]\label{si}
For $ d \ge 3 $, let $V: \R^d \rightarrow \R$ satisfy  $\mathbf{(H1)}$ and $\mathbf{(H2)}$. Then it holds that
	\[\|f\|_{L^q} \lesssim \|f\|_{{\dot{W}}_V^{s,p}}, \quad \|f\|_{L^q} \lesssim \|f\|_{W_V^{s,p}},\]
	where $1<p<q<\infty$, $1<p<\frac{d}{s}$, $0 \leq s \leq 2$ and $\frac{1}{q}=\frac{1}{p}-\frac{s}{d}$.
\end{lemma}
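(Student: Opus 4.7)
The plan is to reduce the estimate to the classical Sobolev embedding $\|f\|_{L^q} \lesssim \||\nabla|^s f\|_{L^p}$, which already holds for the stated relation $\tfrac{1}{q} = \tfrac{1}{p} - \tfrac{s}{d}$ and $1 < p < d/s$. It then suffices to show that the comparison operator $T_s := |\nabla|^s \HHH^{-s/2}$ is bounded on $L^p$; indeed, writing $g = \HHH^{s/2} f$ gives
\[
\|f\|_{L^q} \lesssim \||\nabla|^s f\|_{L^p} = \|T_s g\|_{L^p} \lesssim \|g\|_{L^p} = \|f\|_{\dot{W}^{s,p}_V},
\]
which is the homogeneous claim. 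The inhomogeneous version follows from the same argument with $\HHH$ replaced by $\HHH + I$, using the analogous bound for $\langle |\nabla| \rangle^{s} \langle \nabla_V \rangle^{-s}$.

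To prove the $L^p$-boundedness of $T_s$ I would proceed in two stages. At the endpoints, $s = 0$ is trivial, and $s = 2$ reduces via the resolvent identity
\[
|\nabla|^2 \HHH^{-1} \;=\; (-\Delta)(-\Delta + V)^{-1} \;=\; I - V(-\Delta + V)^{-1}
\]
to the $L^p$-boundedness of $V(-\Delta + V)^{-1}$. By H\"older and (H1),
\[
\|V g\|_{L^p} \leq \|V\|_{L^{d/2}}\, \|g\|_{L^{r}}, \qquad \tfrac{1}{p} = \tfrac{2}{d} + \tfrac{1}{r},
\]
while (H2) --- combined with the absence of zero-energy resonances afforded by the Kato-type extension highlighted in the remark --- guarantees that $(-\Delta + V)^{-1}$ maps $L^p$ into $L^r$ by a perturbative Neumann-type argument starting from the classical mapping property of $(-\Delta)^{-1}$. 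Once the endpoint $s = 2$ is secured, Stein's complex interpolation applied to the analytic family $z \mapsto \HHH^{-z/2}$ (with $L^2$-bounds on the imaginary axis coming from the spectral theorem applied to the self-adjoint $\HHH$) fills in the intermediate range $0 < s < 2$.

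The principal obstacle is the endpoint $s = 2$ together with $p$ near $d/2$: here $V \in L^{d/2}$ is only barely integrable enough for the H\"older--Sobolev composition to close, so one must genuinely use (H2) to invert $\HHH$ and to prevent accumulation of errors in the Neumann perturbation. Once the multiplier-type $L^p$-boundedness of $T_s$ has been established, the conclusion is immediate, and both the homogeneous and inhomogeneous Sobolev inequalities for $\HHH$ follow in parallel, in perfect analogy with the perturbative arguments of Hong cited in the introduction.
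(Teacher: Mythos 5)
Your strategy --- reduce to the classical Sobolev embedding and prove that the comparison operator $T_s=|\nabla|^s\HHH^{-s/2}$ is bounded on $L^p$ --- is really the route to the \emph{norm equivalence} (Lemma \ref{equivalence}), and as written it has two genuine gaps. (a) The endpoint $s=2$ is not closed and is in fact circular: after the resolvent identity you need $V\HHH^{-1}$ bounded on $L^p$, for which your H\"older step requires $\HHH^{-1}:L^p\to L^r$ with $\tfrac1r=\tfrac1p-\tfrac2d$ --- but that mapping property is precisely the $s=2$ case of the lemma you are proving. Breaking the circle by a Neumann series around $(-\Delta)^{-1}$ fails because $\|V\|_{L^{d/2}}$ need not be small, and hypothesis (H2) (no eigenvalues) does not by itself give invertibility of $I+V(-\Delta)^{-1}$ on $L^p$-based spaces without a further compactness/Fredholm argument that you flag but do not supply. (b) The interpolation step does not give the claimed range. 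With only the spectral-theorem $L^2$ bound for the imaginary powers on the line $\Re z=0$ and an $L^p$ bound at $\Re z=2$, Stein interpolation produces estimates only on the intermediate exponents $p_\theta$ with $\tfrac{1}{p_\theta}=\tfrac{1-\theta}{2}+\tfrac{\theta}{p}$, so the Lebesgue exponent is tied to $s=2\theta$ and you do not recover $T_s$ bounded on \emph{every} $L^p$ with $1<p<d/s$. What is missing is an $L^r$ bound for the imaginary powers with polynomial growth in $|\Im z|$, e.g. $\|(a+\HHH)^{ib}\|_{L^r\to L^r}\lesssim\langle b\rangle^{d/2}$ for all $1<r<\infty$; this is exactly the Sikora--Wright input the paper uses in the proof of Lemma \ref{equivalence}.

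The paper's own proof avoids the comparison with $|\nabla|^s$ entirely and is much shorter: under (H1)--(H2) the semigroup $e^{-t(a+\HHH)}$ satisfies a Gaussian upper bound (Takeda), so the subordination formula $(a+\HHH)^{-s/2}=c_s\int_0^\infty t^{s/2-1}e^{-t(a+\HHH)}\,dt$ yields the pointwise kernel bound $|(a+\HHH)^{-s/2}(x,y)|\lesssim|x-y|^{s-d}$, and the Hardy--Littlewood--Sobolev inequality immediately gives $\|(a+\HHH)^{-s/2}f\|_{L^q}\lesssim\|f\|_{L^p}$, with $a=0$ and $a=1$ producing the homogeneous and inhomogeneous statements respectively. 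If you wish to keep your reduction, the honest repair is to first establish the equivalence $\|f\|_{\dot W^{s,p}_V}\sim\|f\|_{\dot W^{s,p}}$ (which needs the imaginary-powers bounds above) and then quote the classical embedding --- but that makes this lemma a corollary of the next one rather than an independent result.
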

\begin{proof}
Let $0 \leq a \leq 1$ and $A_1,A_2>0$. 
According to Theorem 2 in \cite{mt}, we have the following Gaussian heat kernel estimate,
\begin{equation}\label{Gaussian}
0 \leq e^{-t(a+\HHH)}(x,y) \leq \frac{A_1}{t^{d/2}} e^{-A_2\frac{|x-y|^2}{t}} 
\end{equation}
for any $t>0$ and any $x,y \in \R^d$.
Applying \eqref{Gaussian} to 
\[(a+\HHH)^{-\frac{s}{2}}(x,y) = \frac{1}{\Gamma(s)} \int_0^{\infty} t^{\frac{s}{2}-1-\frac{d}{2}}e^{-\frac{|x-y|^2}{t}}dt.\]
Let $t'=  \frac{|x-y|^2}{t}$, we have 
\begin{align*}
    \bigl| (a+\HHH)^{-\frac{s}{2}}(x,y) \bigr| 
    &= \frac{1}{\Gamma(s)} \int_0^{\infty} \biggl( \frac{|x-y|^2}{t'}  \biggr)^{\frac{s-2-d}{2}}e^{-t'}|x-y|^2(t')^{-2}dt'\\
    &=\frac{1}{\Gamma(s)} \int_0^{\infty}|x-y|^{s-d}e^{-t'}(t')^{\frac{d-s}{2}-1}dt'\\
    &= \frac{1}{|x-y|^{d-s}} \biggl( \frac{\Gamma(\frac{d-s}2)}{\Gamma(s)} \biggr) \lesssim \frac{1}{|x-y|^{d-s}}.
\end{align*}
By Lemma \ref{hls}, it implies 
\begin{equation}\label{a+H}
\Vert (a+\HHH)^{-\frac{s}{2}}f \Vert_{L^q} \lesssim \Vert f \Vert_{L^p} 
\end{equation} 
with $\frac{1}{q}=\frac{1}{p}-\frac{s}{d}$. The proof is completed.
In fact, $a=0$ in \eqref{a+H} corresponds to $\|f\|_{L^q} \lesssim \|f\|_{{\dot{W}}_V^{s,p}}$ and $a=1$ in \eqref{a+H} corresponds to $\|f\|_{L^q} \lesssim \|f\|_{{{W}}_V^{s,p}}$.
\end{proof}

\begin{lemma}[Equivalence of Sobolev spaces]\label{equivalence}
For $ d \ge 3 $, let $V: \R^d \rightarrow \R$ satisfy  $\mathbf{(H1)}$ and $\mathbf{(H2)}$. Then it holds that
	\[
	\|f\|_{{\dot{W}}_V^{s,p}} \sim \|f\|_{{\dot{W}}^{s,p}}, \quad \|f\|_{W_V^{s,p}} \sim \|f\|_{W^{s,p}},
	\]
	where $1<p<\frac{3}{s}$ and $0\leq s \leq 2$.
\end{lemma}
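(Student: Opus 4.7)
The strategy is to establish the endpoint equivalences at $s = 0$ and $s = 2$, then cover the intermediate range by Stein's complex interpolation theorem. I focus on one direction, $\|\nabla_V^s f\|_{L^p} \lesssim \||\nabla|^s f\|_{L^p}$; the reverse inequality follows by running the same argument with the roles of $\HHH$ and $-\Delta$ interchanged, invoking the $\HHH$-Sobolev embedding from Lemma \ref{si} in place of the classical Sobolev embedding. The inhomogeneous equivalence then follows either by the same scheme applied to $1 + \HHH$ in place of $\HHH$, or directly from the decomposition $\|f\|_{W_V^{s,p}} \sim \|f\|_{L^p} + \|f\|_{\dot W_V^{s,p}}$.

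At the endpoints, $s = 0$ is trivial. For $s = 2$, I would split $\HHH f = -\Delta f + V f$, apply the triangle inequality, and control the potential term via H\"older and the classical Sobolev embedding:
$$\|\HHH f\|_{L^p} \le \|\Delta f\|_{L^p} + \|V\|_{L^{d/2}} \|f\|_{L^{dp/(d-2p)}} \lesssim \|\Delta f\|_{L^p},$$
valid for $1 < p < d/2$ thanks to hypothesis $\mathbf{(H1)}$. The reverse inequality at $s = 2$ is identical in form, with Lemma \ref{si} replacing the classical Sobolev embedding in the last step.

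For the interpolation, consider the analytic family
$$T_z := \HHH^{z/2}(-\Delta)^{-z/2}, \qquad 0 \le \Re z \le 2.$$
The endpoint work above gives $L^p$-boundedness of $T_2$. At $\Re z = 0$, I would invoke uniform bounds on the imaginary powers: $(-\Delta)^{iy}$ is $L^p$-bounded by Mikhlin's classical multiplier theorem, and $\HHH^{iy}$ by a spectral multiplier theorem for operators satisfying the Gaussian heat-kernel bound \eqref{Gaussian}, both with at most polynomial growth in $y$. Stein's complex interpolation theorem then yields the $L^p$-boundedness of $T_s$ for all $0 \le s \le 2$, which is exactly the claimed estimate $\|f\|_{\dot W_V^{s,p}} \lesssim \|f\|_{\dot W^{s,p}}$.

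The main obstacle is the $L^p$-boundedness of the imaginary powers $\HHH^{iy}$ with polynomial control in $y$. Although this is a well-known consequence of Gaussian heat-kernel estimates, a self-contained justification requires either Hebisch-type spectral multiplier machinery or an $H^\infty$-calculus argument, neither of which is entirely elementary. A subtler point to watch is that one must verify that the analytic family $T_z$ is admissible in Stein's sense (growth $\le e^{a|y|^b}$ with $b < \pi/2$ on vertical strips), which follows from the same Gaussian bound once the imaginary-power estimates are granted. Once these analytic ingredients are in place, the remainder is routine endpoint bookkeeping.
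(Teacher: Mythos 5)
Your proposal is correct and follows essentially the same route as the paper: the $s=2$ endpoint via H\"older with $V\in L^{d/2}$ plus Sobolev embedding, uniform polynomially-growing bounds on the imaginary powers of both $-\Delta$ and $\HHH$ (the paper cites Sikora--Wright for the bound $\lesssim\langle b\rangle^{d/2}$, resolving the analytic ingredient you flagged), and Stein--Weiss complex interpolation on the strip. The only cosmetic difference is that the paper treats the homogeneous and inhomogeneous cases simultaneously through the shifted operators $a+\HHH$ and $a-\Delta$ with $a\in\{0,1\}$, which is one of the two options you already mention.
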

\begin{proof}
    Let $0 \leq a \leq 1$. On one hand, using Lemma \ref{holder}, we have 
    \begin{align*}
        \Vert (a+\HHH)f \Vert_{L^r}&\leq \Vert (a-\Delta)f \Vert_{L^r}+\Vert Vf \Vert_{L^r}\\
        &\leq \Vert (a-\Delta)f \Vert_{L^r}+\Vert V \Vert_{L^{\frac{d}{2}}}\Vert f \Vert_{L^{\frac{dr}{d-2r}}}\\
        &\lesssim  \Vert (a-\Delta)f \Vert_{L^r}
    \end{align*}
    and 
    \begin{align*}
        \Vert (a-\Delta)f \Vert_{L^r}&\leq \Vert (a+\HHH)f \Vert_{L^r}+\Vert Vf \Vert_{L^r}\\
        &\leq \Vert (a+\HHH)f \Vert_{L^r}+\Vert V \Vert_{L^{\frac{d}{2}}}\Vert f \Vert_{L^{\frac{dr}{d-2r}}}\\
        &\lesssim \Vert (a+\HHH)f \Vert_{L^r}.
    \end{align*}
    On the other hand, according to the boundness of the imaginary power operator in \cite{aj}, we have 
    \[ \Vert(a+\HHH)^{ib} \Vert_{L^r \to L^r} \lesssim \langle b \rangle^{\frac{d}{2}} \]
    and
    \[ \Vert(a-\Delta)^{ib} \Vert_{L^r \to L^r} \lesssim \langle b \rangle^{\frac{d}{2}} \]
    for any $b\in \R$ and $1<r <\infty$. 
    
    Combining the above two parts, we have 
    \begin{align*}
        &\Vert (a+\HHH)^zf\Vert_{L^r} \lesssim \langle \Im z \rangle^{\frac{d}{2}}\Vert (a-\Delta)^zf\Vert_{L^r},\\
        &\Vert (a-\Delta)^zf\Vert_{L^r} \lesssim \langle \Im z \rangle^{\frac{d}{2}} \Vert (a+\HHH)^zf\Vert_{L^r}
    \end{align*}
    for $1<r<\infty$ when $\Re z =0$ and $1<r<\frac{d}{2}$ when $\Re z =1$. By the Stein-Weiss complex interpolation, we obtain the equivalence
    \[ 
    \Vert (a+\HHH)f\Vert_{L^r} \sim \Vert (a-\Delta)f\Vert_{L^r}, ~1<r<\frac{d}{2}.
    \]
    In particular, $a=0$ corresponds to $\|f\|_{{\dot{W}}_V^{s,p}}$ and $a=1$ corresponds to $\|f\|_{{{W}}_V^{s,p}}$.
\end{proof}

\section{LWP and the  Gorund State}
Hereafter, we focus on the special case $ (\mu,\gamma,d) = (-1,3,5) $ of \eqref{NLHv}. We will firstly introduce the dispersive and Strichartz estimates for the equation \eqref{NLHv}. Having Strichartz estimate, we establish the local well-posedness result of \eqref{NLHv} using the Banach fixed point theory. Finally, we recall the properties of the ground state, which will be used in the proof of scattering theory and blow up result.
\subsection{Strichartz estimates}
A pair $(q,r) \in \Lambda_s$ is denoted as
\begin{align*}
	\frac{2}{q} = 5 (\frac{1}{2}-\frac{1}{r})-s
\end{align*}
where $2 \leq q \leq \infty, ~ 2\leq r < \frac{2d}{d-2}, ~ (q,r,d) \neq (2,\infty, 2)$.
Then we can define the Strichartz norm for any interval $I \subset \R$
\begin{equation*}
	\Vert u \Vert_{S(L^2,I)}:=\sup_{\substack{(q,r)\in \Lambda_0}}\Vert u \Vert_{L^q(I,L^r)}, \quad \Vert v \Vert_{S'(L^2,I)}:=\inf_{\substack{(q,r)\in \Lambda_0}}\Vert v  \Vert_{L^{q'}(I,L^{r'})},
\end{equation*}
where $(q,q')$ and $(r,r')$  are H\"older's conjugate pairs.

Once $ \HHH$ doesn’t have an eigenvalue, then the dispersive estimate holds by Beceanu-Goldberg \cite{BG}. As we all known, the dispersive estimate is the key tool to establish the Strichartz estimate, which is crucial in the nonlinear dispersive equations. 
\begin{lemma}[Dispersive estimate, \cite{dispersive}]\label{dispersive}
	Let $V: \R^5 \rightarrow \R$ satisfy  $\mathbf{(H1)}$ and $\mathbf{(H2)}$. Then it holds that
	\[
	\|e^{-it\HHH}\|_{L^1 \to L^{\infty}} \lesssim |t|^{-\frac{5}{2}}.
	\]
\end{lemma}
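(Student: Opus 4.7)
The plan is to derive the dispersive estimate by combining Stone's formula with a resolvent expansion, reducing the problem to the classical free dispersive bound. This follows the Beceanu--Goldberg strategy for Kato-type potentials, adapted to our hypotheses (H1)--(H2).

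Under (H1)--(H2), together with the absence of positive embedded eigenvalues and resonances for $L^{d/2}$-potentials guaranteed by \cite{IJ}, the operator $\HHH$ has purely absolutely continuous spectrum on $[0,\infty)$. Stone's formula then expresses the propagator as
\[
e^{-it\HHH} f = \frac{1}{\pi i} \int_0^\infty e^{-it\lambda^2} \lambda \bigl[R^+(\lambda^2) - R^-(\lambda^2)\bigr] f\, d\lambda,
\]
where $R^\pm(\lambda^2) := (\HHH - \lambda^2 \mp i0)^{-1}$ are the limiting resolvents. I would then split the $\lambda$-integral with a smooth cutoff into a high-frequency piece ($\lambda \gtrsim 1$) and a low-frequency piece ($\lambda \lesssim 1$), and analyze each separately.

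For the high-frequency piece, iterate the second resolvent identity $R^\pm = R_0^\pm - R_0^\pm V R^\pm$ to obtain the Born series $R^\pm(\lambda^2) = \sum_{k \ge 0} R_0^\pm(\lambda^2)\bigl(-V R_0^\pm(\lambda^2)\bigr)^k$, where $R_0^\pm$ is the free resolvent. Substituted into Stone's formula, the $k=0$ term reproduces the free Schr\"odinger propagator $e^{it\Delta}$ on $\R^5$, which satisfies the classical bound $\|e^{it\Delta}\|_{L^1 \to L^\infty} \lesssim |t|^{-5/2}$. Each higher-order term is controlled by combining stationary phase in $\lambda$ with operator-norm decay of $V R_0^\pm(\lambda^2)$ in $\lambda$; the hypothesis $V \in L^{5/2}(\R^5)$ supplies precisely the Kato-type mapping bound needed to make the series converge absolutely and to recover the rate $|t|^{-5/2}$.

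The low-frequency regime is where I expect the main obstacle. Near $\lambda = 0$ the Born series diverges, so one must instead factor
\[
R^\pm(\lambda^2) = \bigl(I + R_0^\pm(\lambda^2) V\bigr)^{-1} R_0^\pm(\lambda^2)
\]
and prove that this inverse exists and remains uniformly bounded as $\lambda \to 0$ on suitable weighted $L^2$ spaces. The absence of a zero eigenvalue comes directly from (H2); the absence of a zero resonance in dimension five for $V \in L^{5/2}$ follows from the theory of \cite{IJ}. A Fredholm alternative together with Agmon--Kato--Kuroda weighted resolvent estimates then yields the required bounded inverse and hence control of the low-frequency contribution. Combining this with the high-frequency estimate produces the global bound $\|e^{-it\HHH}\|_{L^1 \to L^\infty} \lesssim |t|^{-5/2}$.
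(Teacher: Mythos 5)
The paper does not actually prove this lemma: it is quoted as a black box from Schlag's survey \cite{dispersive}, with the surrounding text asserting that the estimate ``holds by Beceanu--Goldberg \cite{BG}'' once $\HHH$ has no eigenvalues. So there is no in-paper argument to match yours against; the question is whether your sketch closes on its own, and it does not. The decisive gap is the high-frequency step. In dimension $5$ the free resolvent kernel is not uniformly bounded in $\lambda$ --- it behaves like $c\,e^{\pm i\lambda|x-y|}\bigl(|x-y|^{-3}+\lambda|x-y|^{-2}\bigr)$ --- so the $k$-th Born term accumulates positive powers of $\lambda$, and a single stationary-phase estimate in $\lambda$ only produces $|t|^{-1/2}$; upgrading to $|t|^{-5/2}$ requires repeated integration by parts in $\lambda$, which puts derivatives on the resolvent kernels and demands smoothness and pointwise decay of $V$, not merely $V\in L^{5/2}$. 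This is not a technicality: Goldberg and Visan constructed compactly supported potentials of H\"older regularity just below $C^{(d-3)/2}$ (hence in $L^{d/2}$, with $\HHH$ free of eigenvalues and zero resonances) for which the $L^1\to L^{\infty}$ dispersive estimate \emph{fails} for $d\ge 4$. Your claim that ``$V\in L^{5/2}$ supplies precisely the Kato-type mapping bound needed'' is therefore unsupported and false in general; the three-dimensional Kato-class machinery of \cite{BG} that you are implicitly transplanting does not survive the passage to $d=5$ without additional regularity hypotheses on $V$. (This is really a criticism of the lemma as stated in the paper, but your proof inherits the problem rather than resolving it.)

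Two smaller points. The reference \cite{IJ} rules out positive embedded eigenvalues and says nothing about a zero-energy resonance; in $d=5$ resonances at zero are excluded for a different reason (any distributional zero-energy solution with the relevant decay is automatically an $L^2$ eigenfunction because $|x-y|^{-3}$ is locally square integrable), and you should argue that rather than cite \cite{IJ}. Also, the low-frequency Fredholm/Agmon--Kato--Kuroda argument, as outlined, yields weighted-$L^2$ resolvent bounds; converting these into a pointwise $L^1\to L^{\infty}$ bound again requires the kernel-level analysis that is missing from the high-frequency part.
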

 The Schr\"odinger operator $\HHH$ is self-adjoint into $L^2$ because $ \HHH$ doesn’t have an eigenvalue. By Stone’s theorem, the Schr\"odinger evolution group $e^{-it\HHH}$ is generated on $L^2$. Thus we have the following Strichartz estimate combing the dispersive estimate.
\begin{lemma}[Strichartz estimates, \cite{dynamics}]\label{strichartz}
	Let $V: \R^5 \rightarrow \R$ satisfy   $\mathbf{(H1)}$ and $\mathbf{(H2)}$. Then it holds that
	\begin{equation*}
		\begin{aligned}
			\|e^{-it\HHH}f\|_{L^q(\R,L^r)} &\lesssim \|f\|_{L^2}, \\
			\biggl\Vert\int_{0}^{t}e^{-i(t-s)\HHH}f(s)ds\biggl \Vert_{L^q(\R,L^r)} &\lesssim \|f\|_{L^{m'}(\R,L^{n'})},
		\end{aligned}		
	\end{equation*}
	for any $(q,r),(m,n) \in S$, where $(m,m')$ and $(n,n')$ are H\"older's conjugate pairs.
\end{lemma}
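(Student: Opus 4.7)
\textbf{Proof plan for Lemma \ref{strichartz}.} The approach is the standard Keel--Tao abstract Strichartz argument, using Lemma \ref{dispersive} together with the unitarity of $e^{-it\HHH}$ on $L^2$. First I would note that under hypothesis $\mathbf{(H2)}$ the operator $\HHH=-\Delta+V$ is self-adjoint on $L^2(\R^5)$ (the assumption $V\in L^{5/2}$ makes $V$ a Kato-class perturbation so that $\HHH$ is essentially self-adjoint on $C_0^\infty$), and Stone's theorem produces a unitary propagator $\{e^{-it\HHH}\}_{t\in\R}$ on $L^2$. In particular $\|e^{-it\HHH}f\|_{L^2}=\|f\|_{L^2}$ for all $t\in\R$.

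Next I would combine the energy bound with the dispersive estimate $\|e^{-it\HHH}\|_{L^1\to L^\infty}\lesssim |t|^{-5/2}$ from Lemma \ref{dispersive} and interpolate via the Riesz--Thorin theorem, obtaining for every $r\in[2,\infty]$ the pointwise-in-time decay
\begin{equation*}
\|e^{-it\HHH}f\|_{L^r(\R^5)}\lesssim |t|^{-\frac{5}{2}(1-\frac{2}{r})}\,\|f\|_{L^{r'}(\R^5)}.
\end{equation*}
These are exactly the hypotheses of the abstract Keel--Tao theorem with sharp admissibility condition $\frac{2}{q}+\frac{5}{r}=\frac{5}{2}$, i.e.\ $(q,r)\in\Lambda_0$.

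Then I would invoke the Keel--Tao $TT^*$ framework. The $TT^*$ identity for $T f:= e^{-it\HHH}f$ reduces the homogeneous estimate $\|Tf\|_{L^q_tL^r_x}\lesssim \|f\|_{L^2}$ to boundedness of the bilinear form $(F,G)\mapsto \iint \langle e^{-i(t-s)\HHH}F(s),G(t)\rangle\,ds\,dt$ from $L^{q'}_tL^{r'}_x\times L^{q'}_tL^{r'}_x$ to $\C$. Using the decay displayed above together with the Hardy--Littlewood--Sobolev inequality in the time variable (non-endpoint case) one obtains the required bound; the endpoint $(q,r)=(2,\frac{10}{3})$ follows from the full Keel--Tao bilinear dyadic decomposition. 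The retarded inhomogeneous estimate with independent admissible pair $(m,n)$ follows either from a second $TT^*$ duality step combined with the Christ--Kiselev lemma to truncate to $s<t$ (valid whenever $q>m'$, so not at the double endpoint), or directly from Keel--Tao's bilinear estimate, which handles all admissible pairs simultaneously.

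The main obstacle is really the verification of the two ingredients feeding into Keel--Tao: the $L^2$ unitarity (which rests on self-adjointness, hence on $\mathbf{(H2)}$) and the dispersive bound (which rests on $\mathbf{(H1)}$--$\mathbf{(H2)}$ via \cite{BG,dispersive}). Once these are in place, the passage to Strichartz is purely abstract and independent of the potential, so the proof is a direct quotation of \cite{dynamics,dispersive}. I would write up the argument by stating the two ingredients, appealing to the abstract Keel--Tao theorem, and remarking that the endpoint is permitted since $d=5>2$ so $(2,10/3)\neq(2,\infty)$.
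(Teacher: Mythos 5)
Your proposal is correct and follows essentially the same route as the paper, which itself gives no proof but only the same two ingredients — the dispersive estimate of Lemma \ref{dispersive} plus $L^2$ unitarity via self-adjointness and Stone's theorem — before quoting the result from \cite{dynamics}. Your write-up simply fills in the standard Keel--Tao $TT^*$ machinery that the paper leaves implicit, so there is nothing to reconcile.
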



\subsection{Local well-posedness} 
In this subsection,we will prove the \eqref{NLHv} is locally well-posed in $H^1$ under the assumptions $\mathbf{(H1)}$ and $\mathbf{(H2)}$ using Banach contraction mapping principle. Here Strichartz estimates plays an important role in the proofs. Besides, the only difference in the proofs is
that the norm equivalence (Lemma \ref{equivalence}) is used, compared with \cite{meng}.

\begin{proposition}[Local well-posedness]\label{lwp}
	Let \ $V: \R^5 \rightarrow \R$ satisfy  $\mathbf{(H1)}$ and $\mathbf{(H2)}$. Then the equation \eqref{NLHv} is locally well-posed in $H^1$ for $(\mu, \gamma, d)=(-1,3,5)$.
\end{proposition}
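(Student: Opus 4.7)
The plan is to apply Banach's fixed point theorem to the Duhamel formulation
\[
\Phi(u)(t) := e^{-it\HHH} u_0 + i \int_0^t e^{-i(t-s)\HHH} \bigl[ (|\cdot|^{-3} * |u|^2) u \bigr](s)\, ds
\]
in a closed ball of a suitable Strichartz-type space on a short time interval $I = [0, T]$. A convenient setting in five dimensions is
\[
X(I) := C(I; H^1(\R^5)) \cap L^q(I; W^{1, r}(\R^5))
\]
for a Strichartz admissible pair such as $(q, r) = (2, 10/3)$ (which satisfies $2/q + 5/r = 5/2$). The crucial new ingredient compared with the potential-free argument of \cite{meng} is that the Strichartz estimates of Lemma \ref{strichartz} are naturally stated for the propagator $e^{-it\HHH}$ and the $\HHH$-adapted Sobolev norm $W_V^{1,r}$, whereas the nonlinear estimate is most cleanly performed with the flat norm $W^{1,r}$; the norm equivalence in Lemma \ref{equivalence} lets us pass between the two freely under hypotheses \textbf{(H1)}--\textbf{(H2)}.

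The core analytical step is to control the Hartree nonlinearity $F(u) = (|\cdot|^{-3} * |u|^2) u$ in a dual Strichartz norm. By Hardy--Littlewood--Sobolev (Lemma \ref{hls}) one has $\bigl\||\cdot|^{-3} * |u|^2\bigr\|_{L_x^{5/2}} \lesssim \|u\|_{L_x^{5/2}}^2$, and H\"older with exponents $5/2$ and $10/3$ yields, pointwise in time,
\[
\|F(u)\|_{L_x^{10/7}} \lesssim \|u\|_{L_x^{5/2}}^2 \,\|u\|_{L_x^{10/3}} \lesssim \|u\|_{H_x^1}^2\, \|u\|_{L_x^{10/3}},
\]
where the last step uses the Sobolev embedding $H_x^1(\R^5) \hookrightarrow L_x^{5/2}$. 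A parallel estimate for $\nabla F(u)$, obtained by the Leibniz rule and a second application of HLS, H\"older and Sobolev embedding, produces a bound of the form $\|\nabla F(u)\|_{L_x^{10/7}} \lesssim \|u\|_{H_x^1}^2\, \|u\|_{W_x^{1, 10/3}}$. Integrating in $t$ and inserting a small factor $T^\theta$ via H\"older in time (using a slightly non-sharp admissible pair, or interpolating against $L_t^\infty L_x^2$), Strichartz gives
\[
\|\Phi(u)\|_{X(I)} \le C \|u_0\|_{H^1} + C\, T^\theta \|u\|_{X(I)}^3,
\]
and the analogous estimate for $\Phi(u) - \Phi(v)$, based on the algebraic identity $F(u) - F(v) = \bigl(|\cdot|^{-3} * (|u|^2 - |v|^2)\bigr) u + \bigl(|\cdot|^{-3} * |v|^2\bigr)(u - v)$, yields the matching contraction bound.

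Choosing $T$ small enough in terms of $\|u_0\|_{H^1}$ then makes $\Phi$ a strict contraction on the ball $\{u \in X(I) : \|u\|_{X(I)} \le 2C\|u_0\|_{H^1}\}$, whose unique fixed point is the desired local $H^1$ solution. Uniqueness in $X(I)$, continuous dependence on the data and the blow-up alternative then follow by the same estimates applied to differences and by a standard continuation argument, and mass/energy conservation pass to $H^1$-solutions by approximation using \eqref{mass}--\eqref{energy}. I do not expect a serious obstacle here: the argument is a direct transcription of the $V \equiv 0$ Hartree local theory, with the one substantive novelty being the systematic use of Lemma \ref{equivalence} so that $e^{-it\HHH}$-Strichartz can be applied with standard Sobolev norms; the potential is otherwise invisible to the nonlinear estimate.
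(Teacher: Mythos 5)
Your proposal is correct and follows essentially the same route as the paper: a contraction mapping argument for the Duhamel map in a Strichartz-type ball, with the Hartree nonlinearity controlled in the dual norm $L_t^2 L_x^{10/7}$ via Hardy--Littlewood--Sobolev, H\"older and Sobolev embedding, a $T^{\theta}$ gain from H\"older in time, and Lemma \ref{equivalence} used to pass between the $\HHH$-adapted and flat Sobolev norms. The only cosmetic difference is your choice of working norm ($C_t H^1 \cap L_t^2 W^{1,10/3}$ versus the paper's full $\Vert \langle \nabla_V \rangle \cdot \Vert_{S(L^2,I)}$), which does not affect the argument.
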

\begin{proof}
	Our proof is in view of a complete  Banach space $(X,d)$, where
	\begin{align*}X:\{u:\Vert \langle \nabla_V \rangle u\Vert_{S(L^2,I)} \leq M\}\end{align*}
	and
	\[d(u,v):=\Vert u-v \Vert_{S(L^2,I)}.\] $I=[0,T]$ with $T,M>0$ will be chosen later. We demonstrate the integral transformation as
	\begin{equation}\label{transform}
		\Phi(u(t)):=e^{-it\HHH}u_0+i\int_{0}^{t}e^{-i(t-s)\HHH}(|\cdot|^{-3} \ast |u|^2)u(s)ds.
	\end{equation}
	The chief aim is to prove that $\Phi$ is a contraction on $(X,d)$.
	According to Lemma \ref{holder},  Lemma \ref{hls}, and Lemma \ref{strichartz}, we have
	\begin{align*}
			&~\Vert \langle \nabla_V \rangle \Phi (u) \Vert_{S(L^2,I)}\\	
               \leq &~ \Vert e^{-it\HHH}\langle \nabla_V \rangle u_0 \Vert_{S(L^2,I)} + 	\biggl \Vert \int_{0}^{t}e^{-i(t-s)\HHH}\langle \nabla_V \rangle(|\cdot|^{-3} \ast |u|^2)u(s)ds \biggl \Vert_{S(L^2,I)}\\
			\lesssim &~ \Vert\langle \nabla_V \rangle u_0 \Vert_{L_x^2(\R^5)} + 	\Vert \langle \nabla_V \rangle(|\cdot|^{-3} \ast |u|^2)u \Vert_{L_t^2L_x^{\frac{10}{7}}(I \times \R^5)}\\
			\sim &~\Vert u_0 \Vert_{H_x^1(\R^5)}+\biggl\Vert \nabla \biggl(( |\cdot|^{-3} \ast |u|^2)u \biggr)\biggr\Vert_{L_t^2L_x^{\frac{10}{7}}(I \times \R^5)}\\
			\lesssim &~ \Vert u_0 \Vert_{H_x^1(\R^5)}+\biggl\Vert  3\Vert u\Vert_{L_x^{\frac{10}{3}}(\R^5)}^2 \Vert u \Vert_{H_x^1(\R^5)}\biggl\Vert_{L_t^2(I)}\\
			\lesssim &~ \Vert u_0 \Vert_{H_x^1(\R^5)}+3T^{\frac{1}{4}}\Vert u \Vert_{L_t^2L_x^{\frac{10}{3}}(I \times \R^5)}^2 \Vert u \Vert_{L_t^{\infty}H_x^1(I \times \R^5)} \lesssim \Vert u_0 \Vert_{H_x^1(\R^5)}+3T^{\frac{1}{4}}M^3.
	\end{align*}
	Similarly, we have
	\begin{align*}	
			&~d(\Phi(u)-\Phi(v))=\Vert \Phi(u)-\Phi(v) \Vert_{S(L^2, I)}\\
			 \leq &~ \biggl \Vert \int_{0}^{t}e^{-i(t-s)\HHH}[(|\cdot|^{-3} \ast |u|^2)u-(|\cdot|^{-3} \ast |v|^2)v](s)ds \biggl \Vert_{S(L^2,I)}\\
			 \lesssim &~ \Vert (|\cdot|^{-3} \ast |u|^2)u-(|\cdot|^{-3} \ast |v|^2)v \Vert_{S'(L^2,I)}\\
			 \lesssim  &~ 2T^{\frac{1}{4}}\biggl( \Vert u \Vert_{L_t^2L_x^{\frac{10}{3}}(I \times \R^5)}^2+\Vert u+v \Vert_{L_t^2L_x^{\frac{10}{3}}(I \times \R^5)}\Vert v \Vert_{L_t^{\infty}L_x^2(I \times \R^5)} \biggr)\Vert u-v \Vert_{S(L^2,I)}\\
			 \lesssim  &~ 6T^{\frac{1}{4}} \Vert u \Vert_{S(L^2,I)}^2d(u,v) \lesssim 6T^{\frac{1}{4}} M^2d(u,v).
	\end{align*}
	It implies that for any $u,v \in X$, there exists the constant $C>0$ independent of $u_0$ and $T$ such that
	\[\Vert \langle \nabla_V \rangle \Phi (u) \Vert_{S(L^2,I)} \leq C\Vert u_0 \Vert_{H_x^1(\R^5)}+3CT^{\frac{1}{4}}M^3,\]
	\[d(\Phi(u)-\Phi(v))\leq 6CT^{\frac{1}{4}} M^2d(u,v).\]
	If we choose $M=2C\Vert u_0 \Vert_{H_x^1(\R^5)}$ and take $T>0$ sufficiently small, then we have
	\[\Vert \langle\nabla_V \rangle \Phi (u) \Vert_{S(L^2,I)} \leq M , ~ d(\Phi(u)-\Phi(v)) \leq d(u,v). \]
    The proof of Local well-posedness is completed.
\end{proof}

\subsection{Ground state}
In this subsection, we recall the properties of the ground state, which plays a critical role in  understanding the mass-energy threshold of the solution.

Let us start with the elliptic equation
\begin{equation*}
	\Delta Q-Q+(|\cdot|^{-3} \ast |Q|^2)Q=0,
\end{equation*}
where $Q$ is called as the ground state.  And $u(t,x)=e^{it}Q(x)$ is a time-global non-scattering
solution to $$iu_t +\Delta u+(|\cdot|^{-3} \ast |u|^2)u=0$$
and is called a standing wave solution. 
First of all, it follows from \cite{meng} that we can compute the sharp constant  for Gagliardo-Nirenberg inequality of convolution type
\begin{equation}\label{p<c}
	P(u):=\int_{\R^5}\int_{\R^5}\frac{|u(x)^2|u(y)|^2|}{|x-y|^3}dxdy\leq C_{GN}\Vert u \Vert_{L_x^2(\R^5)}\Vert u \Vert_{\dot{H}_x^1(\R^5)}^3
\end{equation}
which is attained at $u(t,x)=e^{it}Q(x)$.
We can compute the ratios between $ E(Q), P(Q) $ and $ M(Q) $ by the standard Pohozaev's identities as in \cite{meng}, as described below.
\begin{lemma}[Relation]
For the ground state $ Q $ of \eqref{elliptic}, we have
    \begin{equation}\label{pq4}
	\Vert Q \Vert_{\dot{H}_x^1(\R^5)}^2=3\Vert Q \Vert_{L_x^2(\R^5)}^2, \quad P(Q)=4\Vert Q \Vert_{L_x^2(\R^5)}^2.
    \end{equation}
    Then, associating \eqref{pq4} with mass \eqref{mass} and energy \eqref{energy} conservation, we have
    \begin{equation}\label{mqeq}
	\begin{aligned}
		M(Q)E(Q)
		=\frac{1}{6}\Vert Q \Vert_{L_x^2(\R^5)}^2\Vert Q \Vert_{\dot{H}_x^1(\R^5)}^2.
	\end{aligned}
\end{equation}
\end{lemma}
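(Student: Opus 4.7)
The plan is to derive two independent Pohozaev-type identities from the elliptic equation $-\Delta Q + Q = (|\cdot|^{-3}\ast|Q|^2)Q$ and then solve the resulting $2\times 2$ linear system for $\|\nabla Q\|_{L^2}^2$ and $\|Q\|_{L^2}^2$ in terms of $P(Q)$. The third identity (the one concerning $M(Q)E(Q)$) will then follow by direct substitution into the energy functional.

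First, I would multiply the elliptic equation by $Q$ and integrate by parts to obtain the "virial" relation
\begin{equation*}
\|\nabla Q\|_{L^2}^2 + \|Q\|_{L^2}^2 = P(Q).
\end{equation*}
For the second identity, the cleanest route is to use a scaling argument: with $Q_\lambda(x):=Q(\lambda x)$ one has
\begin{equation*}
\|\nabla Q_\lambda\|_{L^2}^2=\lambda^{-3}\|\nabla Q\|_{L^2}^2,\qquad \|Q_\lambda\|_{L^2}^2=\lambda^{-5}\|Q\|_{L^2}^2,\qquad P(Q_\lambda)=\lambda^{-7}P(Q),
\end{equation*}
because the Hartree term scales with exponent $\gamma-2d=-7$ in dimension $d=5$ with $\gamma=3$. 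Since $Q$ is a critical point of the action $S(u)=\tfrac12\|\nabla u\|_{L^2}^2+\tfrac12\|u\|_{L^2}^2-\tfrac14 P(u)$, differentiating $S(Q_\lambda)$ at $\lambda=1$ and setting the result to zero yields
\begin{equation*}
\tfrac{3}{2}\|\nabla Q\|_{L^2}^2+\tfrac{5}{2}\|Q\|_{L^2}^2=\tfrac{7}{4}P(Q).
\end{equation*}
(Alternatively, one can multiply the equation by $x\cdot\nabla Q$ and integrate by parts, using the standard computations $\int(-\Delta Q)(x\cdot\nabla Q)\,dx=(1-d/2)\|\nabla Q\|_{L^2}^2$, $\int Q(x\cdot\nabla Q)\,dx=-\tfrac{d}{2}\|Q\|_{L^2}^2$, and a symmetrization of the convolution integral to get the factor $-\tfrac{2d-\gamma}{4}P(Q)$; the two approaches agree.)

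Eliminating $P(Q)$ between the two displayed identities gives $6\|\nabla Q\|_{L^2}^2+10\|Q\|_{L^2}^2=7\|\nabla Q\|_{L^2}^2+7\|Q\|_{L^2}^2$, i.e.\ $\|\nabla Q\|_{L^2}^2=3\|Q\|_{L^2}^2$, and substituting back into the first identity yields $P(Q)=4\|Q\|_{L^2}^2$, which are the two formulas in \eqref{pq4}. Finally, since $V\equiv 0$ here, $E(Q)=\tfrac12\|\nabla Q\|_{L^2}^2-\tfrac14 P(Q)=\tfrac32\|Q\|_{L^2}^2-\|Q\|_{L^2}^2=\tfrac12\|Q\|_{L^2}^2$, so that
\begin{equation*}
M(Q)E(Q)=\tfrac{1}{2}\|Q\|_{L^2}^{4}=\tfrac{1}{6}\|Q\|_{L^2}^{2}\cdot 3\|Q\|_{L^2}^{2}=\tfrac{1}{6}\|Q\|_{L^2}^{2}\|Q\|_{\dot H^1}^{2},
\end{equation*}
which is \eqref{mqeq}.

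There is no real analytic obstacle here; the whole argument is bookkeeping of scaling exponents. The one spot to be careful is the scaling exponent of the Hartree potential energy, where a sign or factor error would propagate through: one must verify that $P(Q_\lambda)=\lambda^{\gamma-2d}P(Q)$ with $\gamma-2d=-7$ in our setting, which amounts to a change of variables in the double integral. Once those exponents are correct, the system is linear and the computation is essentially automatic.
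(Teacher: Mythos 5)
Your proposal is correct: both Pohozaev-type identities are derived accurately (the scaling exponents $\lambda^{-3}$, $\lambda^{-5}$, $\lambda^{-7}$ and the resulting linear system all check out), and the final substitution into the potential-free energy $E_0(Q)=\tfrac12\Vert\nabla Q\Vert_{L^2}^2-\tfrac14 P(Q)$ gives \eqref{mqeq} as claimed. This is exactly the route the paper intends — it simply defers to ``standard Pohozaev's identities as in \cite{meng}'' without writing them out — so your argument supplies the details of the same approach rather than a different one.
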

 
Thus by the property of \eqref{p<c}, we get
\begin{equation}\label{cgn}
	C_{GN}=\frac{P(Q)}{\Vert Q \Vert_{L_x^2(\R^5)}\Vert Q \Vert_{\dot{H}_x^1(\R^5)}^3}=\frac{4}{3\sqrt{3}\Vert Q \Vert_{L_x^2(\R^5)}^2}.
\end{equation}
Now we show some conclusion about the coercivity condition, which will be used in the proof of scattering theory.
\begin{property}[Coercivity]\label{coer}
Assume that $u:I \times \R^5 \to C$ is the maximal-lifespan solution to \eqref{NLHv} for $(\mu, \gamma
 ,d)=(-1,3,5)$ and  $ V \ge 0 $ satisfies $\mathbf{(H1)}$ and $\mathbf{(H2)}$. If 
 $M(u_0)E(u_0)<(1-\delta)M(Q)E(Q)$
 and
 $\Vert u_0 \Vert_{L_x^2(\R^5)}\Vert \nabla_V u_0 \Vert_{L_x^2(\R^5)} \leq \Vert Q \Vert_{L_x^2(\R^5)}\Vert \nabla Q \Vert_{L_x^2(\R^5)}$.
 \begin{itemize} 
  \item Then there exists $\delta'=\delta'(\delta)>0$. 
 So that for all $t \in I$, we have
     \begin{equation}\label{coer1}
		\Vert u(t) \Vert_{L_x^2(\R^5)}\Vert \nabla_V u(t) \Vert_{L_x^2(\R^5)}\leq (1-\delta')\Vert Q \Vert_{L_x^2(\R^5)}\Vert \nabla Q \Vert_{L_x^2(\R^5)}.
	\end{equation}
	In particular, $I=\R^5$ and $u$ is uniformly bounded in $H_x^1(\R^5)$.
  \item Suppose $\Vert u \Vert_{L_x^2(\R^5)}\Vert \nabla_V u \Vert_{L_x^2(\R^5)}\leq (1-\delta)\Vert Q \Vert_{L_x^2(\R^5)}\Vert \nabla Q \Vert_{L_x^2(\R^5)}$, then there exists $\delta'=\delta'(\delta)>0$ such that
	\begin{equation}\label{coer2}
		\Vert \nabla_V u \Vert_{L_x^2(\R^5)}^2-\frac{3}{4}P(u) \geq \delta'P(u).
	\end{equation}
  \item There exists $R=R(\delta, M(u), Q)>0$ sufficiently large such that
	\begin{equation}\label{coer3}
		\sup_{t \in \R}\Vert \chi_Ru(t)\Vert_{L_x^2(\R^5)}\Vert \nabla_V\chi_Ru(t)\Vert_{L_x^2(\R^5)}<(1-\delta)\Vert Q \Vert_{L_x^2(\R^5)}\Vert \nabla Q \Vert_{L_x^2(\R^5)}.
	\end{equation}
	In particular, by \eqref{coer2}, there exists $\delta'=\delta'(\delta)>0$ so that
	\begin{equation}
		\Vert \nabla_V \chi_Ru(t) \Vert_{L_x^2(\R^5)}^2-\frac{3}{4}P(\chi_Ru(t)) \geq \delta'P(\chi_Ru(t))
	\end{equation}
	uniformly for $t\in\R.$
 \end{itemize}
\end{property}

\begin{proof}
The proof is similar to that of \cite{meng}, hence we here only give the proof of \eqref{coer1}. Set
	\begin{equation*}
		f(t)=\frac{\Vert u \Vert_{L_x^2(\R^5)}\Vert\nabla_V u\Vert_{L_x^2(\R^5)}}{\Vert Q \Vert_{L_x^2(\R^5)}\Vert \nabla Q \Vert_{L_x^2(\R^5)}}
		\sim \frac{\Vert u \Vert_{L_x^2(\R^5)}\Vert u\Vert_{\dot{H}_x^1(\R^5)}}{\Vert Q \Vert_{L_x^2(\R^5)}\Vert  Q \Vert_{\dot{H}_x^1(\R^5)}} \in C(I).
	\end{equation*}
	It follows from \eqref{mass}, \eqref{energy} and \eqref{p<c} that we have
	\begin{align*}
		M(u)E(u)&=\Vert u 
            \Vert_{L_x^2(\R^5)}^2\biggl(\frac{1}{2}\Vert u \Vert_{\dot{H}_x^1(\R^5)}^2+\frac{1}{2}\int_{\R^5}V|u|^2dx-\frac{1}{4}P(u)\biggr)\\
            & \geq \Vert u \Vert_{L_x^2(\R^5)}^2\biggl(\frac{1}{2}\Vert u \Vert_{\dot{H}_x^1(\R^5)}^2-\frac{1}{4}P(u)\biggr)\\
		&\geq \Vert u \Vert_{L_x^2(\R^5)}^2\biggl(\frac{1}{2}\Vert u \Vert_{\dot{H}_x^1(\R^5)}^2-\frac{1}{4}C_{GN}\Vert u \Vert_{L_x^2(\R^5)}\Vert u \Vert_{\dot{H}_x^1(\R^5)}^3\biggr)\\
		&=\frac{1}{2}\Vert u \Vert_{L_x^2(\R^5)}^2\Vert u \Vert_{\dot{H}_x^1(\R^5)}^2-\frac{1}{4}C_{GN}\Vert u \Vert_{L_x^2(\R^5)}^3\Vert u \Vert_{\dot{H}_x^1(\R^5)}^3.
	\end{align*}
	Combining \eqref{mqeq} and \eqref{cgn}, we have
	\begin{align*}
		\frac{M(u)E(u)}{M(Q)E(Q)}&=\frac{\Vert u \Vert_{L_x^2(\R^5)}^2\Vert u\Vert_{\dot{H}_x^1(\R^5)}^2}{2M(Q)E(Q)}-\frac{C_{GN}\Vert u \Vert_{L_x^2(\R^5)}^3\Vert u\Vert_{\dot{H}_x^1(\R^5)}^3}{4M(Q)E(Q)}\\
		&=\frac{3\Vert u \Vert_{L_x^2(\R^5)}^2\Vert u\Vert_{\dot{H}_x^1(\R^5)}^2}{\Vert Q \Vert_{L_x^2(\R^5)}^2\Vert Q \Vert_{\dot{H}_x^1(\R^5)}^2}-\frac{4}{3\sqrt{3}\Vert Q \Vert_{L_x^2(\R^5)}^2} \cdot \frac{3\Vert u \Vert_{L_x^2(\R^5)}^3\Vert u\Vert_{\dot{H}_x^1(\R^5)}^3}{2\Vert Q \Vert_{L_x^2(\R^5)}^2\Vert Q \Vert_{\dot{H}_x^1(\R^5)}^2}\\
		&=3f^2-2f^3 \in (0,1-\delta).
	\end{align*}
	So there exists $0<\rho< 1$ such that $f(t) \in (0,\rho),\forall \ t\in I$, and then \eqref{coer1} holds.\end{proof}

\begin{remark}
If $\delta=0$ in Property \ref{coer}, we will see  $f$ is strictly less than 1. Actually, let $g(f)=3f^2-2f^3$, then $g(0)=0$ and $g(1)=1$, the figure  is shown as Figure.\ref{fig:enter-label}. Under this condition, $g$ is monotonically increasing in (0,1). 
Thus \eqref{lambdatq}  holds. Moreover, we further find that $f>1$ when $g(f(t)) \in (0,1)$ consistently holds. Hence \eqref{lambdat>q} is reasonable.
\begin{figure}[ht]
    \centering
    \includegraphics[scale=0.35]{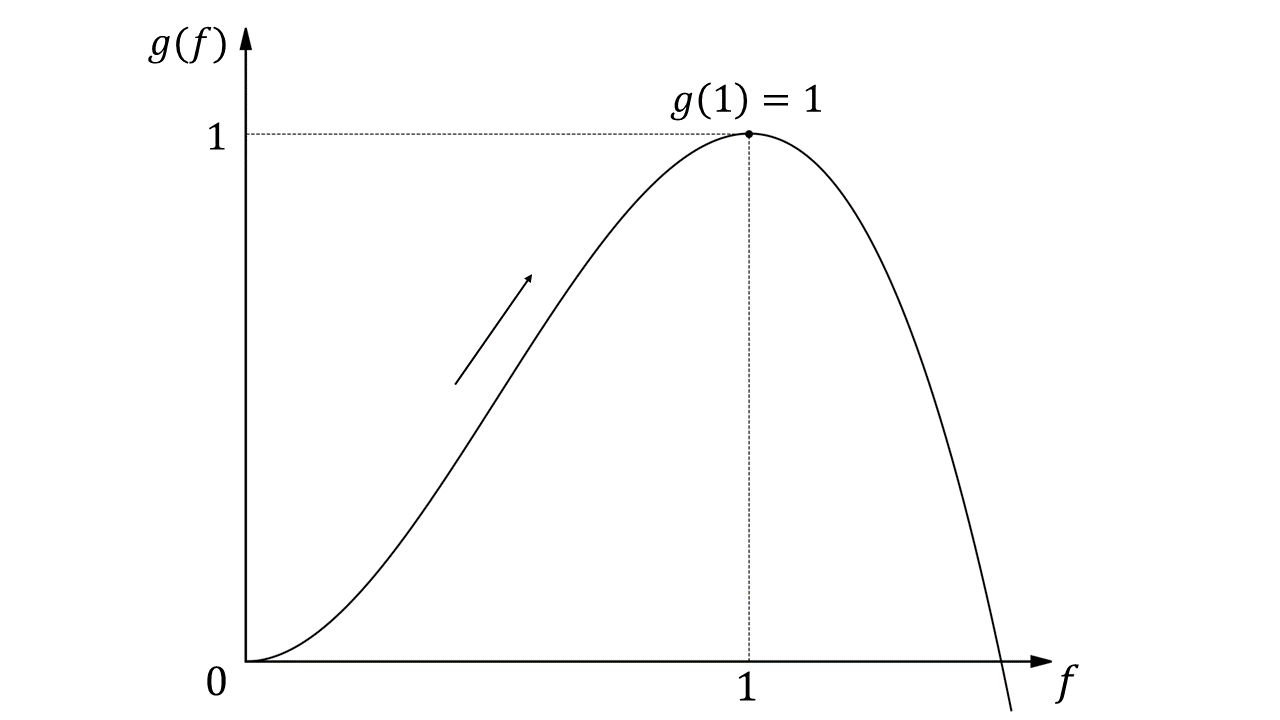}
    \caption{The graph of $g(f)$.}
    \label{fig:enter-label}
\end{figure}

	
\end{remark}

\section{Proof of Scattering}
In this section, we prove scattering criteria firstly. Then we prove the Morawetz estimate, and combine it with scattering criteria to prove the scattering part in Theorem \ref{conclusion}. The proofs are based on the argument of Dodson-Murphy \cite{dodson}.
\subsection{Proof of scattering criteria}
In this subsection, we will prove a scattering criteria. That is, if the solution is bounded in $H^1$, then the solution scatters only if the mass of the solution inside a ball is sufficiently small in some sense. 
\begin{proposition}[Scattering criteria]\label{scattercriterion}
	For $(\mu,\gamma,d) =(-1,3,5)$ in \eqref{NLHv}, let $V: \R^5 \rightarrow \R$ satisfy $\mathbf{(H1)}$ and $\mathbf{(H2)}$. If $u \in H^1$ is a solution to  \eqref{NLHv}   satisfying
	\begin{equation}\label{sup}
		\sup_t \Vert u(t) \Vert_{H^1} :=\mathcal{E} < +\infty,
	\end{equation}
	then there exist two constants $R>0$  and $\eps > 0$, depending only on E, such that if
	\begin{equation}\label{lim}
		\lim_{t \to \infty} \int_{B(0,R)}|u(t,x)|^2dx \leq \eps^{1+},
	\end{equation}
	then u scatters forward in time in $H^1(\R^5)$.
	
\end{proposition}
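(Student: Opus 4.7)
\emph{Proof plan.} The strategy follows Dodson--Murphy. By Strichartz estimates (Lemma \ref{strichartz}) together with the equivalence of Sobolev spaces (Lemma \ref{equivalence}) and the $H^1$-local theory of Proposition \ref{lwp}, forward scattering in $H^1$ reduces to proving that, for every $\eta>0$, there exists $T=T(\eta)>0$ with
\[
\bigl\|\langle\nabla_V\rangle\bigl((|\cdot|^{-3}\ast|u|^2)u\bigr)\bigr\|_{S'(L^2,[T,\infty))}<\eta.
\]
Indeed, a standard stability/bootstrap argument then upgrades this tail smallness to a global Strichartz bound on $[T,\infty)$, which in combination with the Duhamel formula produces an asymptotic state $u_+\in H^1$.

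To obtain the tail smallness I would fix a smooth radial cutoff $\chi_R$ with $\chi_R\equiv 1$ on $B(0,R/2)$ and supported in $B(0,R)$, and split
\[
|u|^2=|\chi_R u|^2+\bigl(|u|^2-|\chi_R u|^2\bigr).
\]
The $|\chi_R u|^2$ contribution is controlled by the hypothesis \eqref{lim}: combined with Sobolev embedding and interpolation (Lemma \ref{interpolation}), one obtains $\|\chi_R u\|_{L^p}\lesssim \eps^{\theta}\mathcal{E}^{1-\theta}$ for suitable exponents, which by the Hardy--Littlewood--Sobolev inequality (Lemma \ref{hls}) transfers smallness onto the Hartree convolution. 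The complementary piece uses radial symmetry: Lemma \ref{rse} in dimension five yields the pointwise bound $|u(t,x)|\lesssim |x|^{-2}\mathcal{E}$ on $|x|\ge R/2$, so each cross term and the pure exterior term pick up a factor $R^{-\alpha}$ for some $\alpha>0$ after Hölder and HLS. Summing the four pairings $vv$, $vw$, $wv$, $ww$ with $v=\chi_R u$ and $w=(1-\chi_R)u$, and treating the $\nabla_V$-derivative via the Leibniz rule together with Lemma \ref{equivalence} (which reduces $\nabla_V$ to $\nabla$), yields a bound of the form
\[
\bigl\|\langle\nabla_V\rangle\bigl((|\cdot|^{-3}\ast|u|^2)u\bigr)\bigr\|_{S'(L^2,[T,\infty))}\lesssim_{\mathcal{E}} R^{-\alpha}+\eps^{\theta'},
\]
valid once $T$ is large enough that \eqref{lim} is in force on $[T,\infty)$. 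Choosing $R$ large and then $\eps$ small makes this smaller than $\eta$.

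The main obstacle is the nonlocality of the Hartree nonlinearity: the interior/exterior splitting produces four convolution pairings rather than the two that appear for the local power-type NLS in the Dodson--Murphy framework, and each cross term requires a delicate balance between the radial decay $|u(x)|\lesssim |x|^{-2}$ and the singular kernel $|x-y|^{-3}$ in $\R^5$. A secondary technical point is that $\nabla_V$ falling on the nonlinearity becomes $\nabla$ after Lemma \ref{equivalence}, and the commutator $[\nabla,\chi_R]$ produces a boundary-layer term supported in $\{R/2\le|x|\le R\}$ which must be absorbed using the exterior pointwise estimate together with the uniform energy $\mathcal{E}$, so that the final error still decays as $R\to\infty$.
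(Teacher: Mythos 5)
There is a genuine gap at the very first step of your plan, the reduction to
\[
\bigl\|\langle\nabla_V\rangle\bigl((|\cdot|^{-3}\ast|u|^2)u\bigr)\bigr\|_{S'(L^2,[T,\infty))}\lesssim_{\mathcal{E}}R^{-\alpha}+\eps^{\theta'}.
\]
The dual Strichartz exponents satisfy $q'\le 2<\infty$, so a bound $\|F(u(t))\|_{L_x^{r'}}\le\delta$ holding uniformly for $t\ge T$ integrates to $+\infty$ over the infinite interval $[T,\infty)$: uniform-in-time spatial smallness does not give smallness, or even finiteness, of the space-time dual norm. The only way to close such an estimate is a bootstrap of the form $\|F(u)\|_{S'}\lesssim\delta^{\kappa}\|u\|_{S}^{m}$, and a bootstrap needs a small seed, namely that $\|e^{i(t-T)\HHH}u(T)\|_{L_t^4L_x^{10/3}([T,\infty))}$ is small; plain Strichartz only gives $\lesssim\|u(T)\|_{H^1}\sim\mathcal{E}$, which is not small. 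This seeding step is the heart of the Dodson--Murphy scheme and is exactly what your plan omits. The paper obtains it by writing $e^{i(t-T)\HHH}u(T)=e^{it\HHH}u_0+iG_1+iG_2$ via Duhamel, where $G_1$ collects the distant past $[0,T-l]$ and $G_2$ the recent window $[T-l,T]$ with $l=\eps^{-1/8}$: the term $G_1$ is made small in $L_t^4L_x^{10/3}([T,\infty))$ by interpolating against an $L_t^4L_x^\infty$ bound that decays like $l^{-5/4}$ thanks to the dispersive estimate (Lemma \ref{dispersive}), while $G_2$ is small because the spatial smallness is propagated over the finite window and fed through a finite-interval Strichartz bootstrap (Lemma \ref{continuity}). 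Without the dispersive decay of $G_1$ there is no small quantity on $[T,\infty)$ from which to start.

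A secondary remark: the spatial ingredients you assembled are essentially the right ones and correspond to the paper's Step one --- interior mass controlled by \eqref{lim} plus interpolation, exterior controlled by the radial bound $|u(x)|\lesssim|x|^{-2}\mathcal{E}$ from Lemma \ref{rse} --- but the paper deploys them only on the finite window $[T-l,T]$, using the hypothesis at a single time $T_1$ and propagating it by the almost-conservation of localized mass, $\bigl|\tfrac{d}{dt}\int\eta_R^2|u|^2dx\bigr|\lesssim R^{-1}\mathcal{E}^2$, and the four convolution pairings you anticipate are absorbed in one stroke by estimating $\|u\|_{L_x^{5/2}}$ (interpolating $L_x^2$ inside and the weighted $L_x^\infty$ bound outside) before applying H\"older and Hardy--Littlewood--Sobolev. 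So the pieces are correct but must be run on a finite time window and then combined with the dispersive estimate, not applied directly on $[T,\infty)$.
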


\begin{proof}
	Our proof is divided into four steps.
	
	$\mathit{Step \ one}$. In this step, we will prove the Strichartz norm of $u(t,x)$ is very small in $[T-l,T]$, where $l$ will be decided later. That is to prove
	\begin{equation}\label{52eps}
		||u(t,x)||_{L_t^{\infty}L_x^{\frac{5}{2}}([T-l,T]\times\R^5)} \lesssim \eps^{\frac{1}{4}+}.
	\end{equation}
	
According to the assumption \eqref{lim}, there exists $T_1>0$  such that
\begin{equation}
\int_{B(0,R)} |u(T_1,x)|^2 dx \leq \eps^{1+}. 
\label{BR}
\end{equation}
Let $\eta_R \in C^{\infty}(\R^5)$, define the radial cut-off function:
\begin{equation}\label{eta}
\eta_R(x)=\left\{
\begin{aligned}
& 1, \qquad & |x| \leq \frac{R}{2},\\
& 0, \qquad & |x| \geq R, \\
\end{aligned}
\right .
\end{equation}
with $ 0 \le \eta_R (x) \le 1 $ and $\frac{2}{R}<|\nabla \eta_R|<\frac{3}{R}$ for $ \frac{R}2 < \vert x \vert < R $. 
Then, by \eqref{BR}, we have 
\[
\int_{\R^5} |\eta_R(x)u(T_1,x)|^2 dx 
\le \Vert \eta_R \Vert_{L^{\infty}} \int_{B(0,R)} |u(T_1,x)|^2 dx 
\leq \eps^{1+}. 
\]
To establish a connection between $u(T_1,x)$ and $\Vert u \Vert_{H_x^1(\R^5)}$, we use the identity 
\[
\partial_t|u|^2 = 2 \Re (\bar{u} u_t) = -2 \Im (\bar{u} \Delta u), 
\] 
derived from \eqref{NLH}, together with integration by parts and Cauchy-Schwarz inequality, we deduce
	\begin{align*}
		& ~\Big\vert \frac{d}{dt} \int_{\R^5}|\eta_R(x)|^2|u(T_1,x)|^2dx \Big\vert \\
		\leq & ~ 2 \int_{\R^5} \eta_R(x) |\nabla\eta_R(x)| |\Im(\bar{u}\Delta u)| dx 
		\leq \frac{6}{R}||u||_{H_x^1(\R^5)}^2,
		\nonumber
	\end{align*}
   which satisfies $\frac{2}{R}<|\nabla \eta_R|<\frac{3}{R}$ when $\frac{R}{2}<|x|<R$.

   On one hand, we can find
   \begin{align*}
  		& ~ ||\eta_R(x)u(t,x)||_{L_t^{\infty}L_x^2([T-l,T]\times\R^5)}
  		\leq  \biggl\| \biggl (\int_{\R^5}|\eta_R(x)|^2|u(t,x)|^2dx\biggl)^\frac{1}{2}\biggl\|_{L_t^{\infty}([T-l,T])}\\
  		\leq & ~ \biggl\| \biggl (\int_{\R^5}|\eta_R(x)|^2|u(T_1,x)|^2dx +\biggl | \frac{d}{dt} \int_{\R^5}|\eta_R(x)|^2|u(T_1,x)|^2dx\biggr| |t-T_1| \biggl)^\frac{1}{2}  \biggl\|_{L_t^{\infty}([T-l,T])}\\
  		\leq & ~ \biggl\| \biggl (\int_{\R^5}|\eta_R(x)|^2|u(T_1,x)|^2dx + \frac{6}{R}||u||_{H_x^1(\R^5)}^2 |t-T_1| \biggl)^\frac{1}{2}  \biggl\|_{L_t^{\infty}([T-l,T])}\\
  		\leq & ~ \biggl\| \biggl (\eps^{1+} + \frac{6}{R}||u||_{H_x^1(\R^5)}^2 |t-T_1| \biggl)^\frac{1}{2}  \biggl\|_{L_t^{\infty}([T-l,T])} \leq 2  \eps^{\frac{1}{2}+},
   	\nonumber
   \end{align*}
   where $l=\eps^{-\frac{1}{8}}$, $R\geq 2||u||_{L_t^{\infty}H_x^1(\R \times \R^5)}\eps^{-\frac{9}{8}-}$, and $t,T_1 \in [T-l,T]$.
   Using Lemma \ref{interpolation} and Sobolev embedding, we have
   \begin{equation}
	\begin{aligned}
		&~||\eta_R(x)u(t,x)||_{L_t^{\infty}L_x^{\frac{5}{2}}([T-l,T]\times\R^5)}\\
		\leq & ~ ||\eta_R(x)u(t,x)||_{L_t^{\infty}L_x^2([T-l,T]\times\R^5)}^{\frac{1}{2}} ||\eta_R(x)u(t,x)||_{L_t^{\infty}L_x^{\frac{10}{3}}([T-l,T]\times\R^5)}^{\frac{1}{2}}\\
		\leq & ~ \sqrt{2}\eps^{\frac{1}{4}+}||u(t,x)||_{L_t^{\infty}H_x^1(\R \times\R^5)}^{\frac{1}{2}}.
	\end{aligned}
	\nonumber
   \end{equation}
   On the other hand, we also have
   \begin{align*}
		&~||(1-\eta_R(x))u(t,x)||_{L_t^{\infty}L_x^{\frac{5}{2}}([T-l,T]\times\R^5)}\\
		\leq & ~ ||(1-\eta_R(x))u(t,x)||_{L_t^{\infty}L_x^{\infty}([T-l,T]\times\R^5)}^{\frac{1}{5}} ||(1-\eta_R(x))u(t,x)||_{L_t^{\infty}L_x^2([T-l,T]\times\R^5)}^{\frac{4}{5}}\\
		\leq & ~ (\frac{4}{R^2})^{\frac{1}{5}}||u(t,x)||_{L_t^{\infty}H_x^1(\R \times\R^5)}^{\frac{4}{5}}
		\leq  \sqrt{2}\eps^{\frac{1}{4}+}||u(t,x)||_{L_t^{\infty}H_x^1(\R \times\R^5)}^{\frac{1}{2}},
   \end{align*}
   where we have used Lemma \ref{rse}. 
   
   Combining the above two parts, we can find
   \begin{equation*}
   		||u(t,x)||_{L_t^{\infty}L_x^{\frac{5}{2}}([T-l,T]\times\R^5)} \leq 2 \sqrt{2}\eps^{\frac{1}{4}+}||u(t,x)||_{L_t^{\infty}H_x^1(\R \times\R^5)}^{\frac{1}{2}} \lesssim \eps^{\frac{1}{4}+}
   \end{equation*}
   for $l=\eps^{-\frac{1}{8}}$.

    $\mathit{Step \ two}$.  In this step, we will prove
    \begin{equation}\label{fh1}
    	\lim_{T \to \infty}\int_{T-l}^{T}\Vert f(u(s)) \Vert_{H_x^1(\R^5)}ds=0
    \end{equation}
    if \eqref{52eps} holds, where $f(u)=- (|\cdot|^{-3} \ast |u|^2)u$.

    Firstly, we can find
    \begin{align*}
    		\nabla f(u)
            =- \bigl(|\cdot|^{-3} \ast (2\Re \bar{u}\nabla u)\bigr)u- \bigl(|\cdot|^{-3} \ast (|u|^2)\bigr)\nabla u.
    \end{align*}
    Using Lemma \ref{holder} and Lemma \ref{hls}, together with Sobolev embedding, we have
    \begin{align*}
   		\Vert f(u) \Vert_{H_x^1(\R^5)} & \leq \Vert |\cdot|^{-3} \ast |u|^2 \Vert_{L_x^2(\R^5)}\Vert u \Vert_{W_x^{1,\frac{10}{3}}(\R^5)} +\Vert |\cdot|^{-3} \ast (2\Re \bar{u}\nabla u) \Vert_{L_x^5(\R^5)}\Vert u \Vert_{L_x^{\frac{10}{3}}(\R^5)}\\
   		&\leq \Vert u \Vert_{L_x^{\frac{10}{3}}(\R^5)}^2 \Vert u \Vert_{W_x^{1,\frac{10}{3}}(\R^5)}+\Vert 2\Re \bar{u}\nabla u \Vert_{L_x^{\frac{5}{3}}(\R^5)}\Vert u \Vert_{L_x^{\frac{10}{3}}(\R^5)}\\
   		&\leq 3\Vert u \Vert_{L_x^{\frac{10}{3}}(\R^5)}^2 \Vert u \Vert_{W_x^{1,\frac{10}{3}}(\R^5)}.
    \end{align*}

    Secondly, we consider the integral with the time in $[T-l,T]$,
    \begin{align}\label{3uuu}
   		\int_{T-l}^{T}  \Vert f(u(s)) \Vert_{H_x^1(\R^5)}ds
   		&\leq 3\int_{T-l}^{T} \Vert u \Vert_{L_x^{\frac{10}{3}}(\R^5)}^2 \Vert u \Vert_{W_x^{1,\frac{10}{3}}(\R^5)}ds\\
   		&\leq 3\Vert u \Vert_{L_t^{\infty}L_x^{\frac{5}{2}}([T-l,T] \times \R^5)}\Vert u \Vert_{L_t^2L_x^5([T-l,T] \times \R^5)} \Vert u \Vert_{L_t^2W_x^{1,\frac{10}{3}}([T-l,T] \times \R^5)}. \notag
    \end{align}

    Thirdly, to find the limit of \eqref{3uuu} when $T \to \infty$, we will  analyze $\Vert u \Vert_{L_t^{\infty}L_x^{\frac{5}{2}}([T-l,T] \times \R^5)}$, $\Vert u \Vert_{L_t^2L_x^5([T-l,T] \times \R^5)}$, $\Vert u \Vert_{L_t^2W_x^{1,\frac{10}{3}}([T-l,T] \times \R^5)}$ respectively. We have already proved \eqref{52eps}, then we only need to prove the other two terms. According to the Strichartz estimates and Sobolev embedding, we can find that
    \begin{equation*}
    	\begin{aligned}
    		&\Vert e^{it\HHH} u \Vert_{L_t^2L_x^5(\R \times \R^5)} \lesssim \Vert e^{it\HHH} u \Vert_{L_t^2W_x^{1,\frac{10}{3}}(\R \times \R^5)} \lesssim \Vert u \Vert_{H_x^1(\R^5)},\\
    		&\Vert e^{it\HHH} u \Vert_{L_t^4L_x^5(\R \times \R^5)} \lesssim \Vert e^{it\HHH} u \Vert_{L_t^4W_x^{1,\frac{5}{2}}(\R \times \R^5)} \lesssim \Vert u \Vert_{H_x^1(\R^5)}.
    	\end{aligned}
    \end{equation*}
    We deduce $\Vert u \Vert_{L_t^qL_x^r([T-l,T] \times \R^5)}\lesssim \langle l \rangle$, where $\langle l \rangle=(1+|l|^2)^{\frac{1}{2}}$ for any $l \in \R$ and
    $$
    \Vert u \Vert_{L_t^qL_x^r(I \times \R^5)}:= \max \bigl\{\Vert u \Vert_{L_t^2W_x^{1,\frac{10}{3}}(I \times \R^5)}, \Vert u \Vert_{L_t^2L_x^5(I \times \R^5)}, \Vert u \Vert_{L_t^4L_x^5(I \times \R^5)} \bigr\}
    $$
    for $I=[T-l,T]$.
    By  Duhamel formula
    \begin{equation}\label{duhamel}
    	u(t)=e^{i(t-T)\HHH}u(T)-i\int_{T}^{t}e^{i(t-s)\HHH}f(u(s))ds
    \end{equation}
    and Lemma \ref{holder}, together with Lemma \ref{equivalence},  we have
    \begin{align*}
    	\Upsilon &:= \Vert u \Vert_{L_t^2L_x^5([T-l,T] \times \R^5)}+\Vert u \Vert_{L_t^4L_x^5([T-l,T] \times \R^5)}+\Vert u \Vert_{L_t^2W_x^{1,\frac{10}{3}}([T-l,T] \times \R^5)}\\
    	& \ \leq 3\biggl( \Vert u(T) \Vert_{H_x^1(\R^5)}+\int_{T-l}^{T}\Vert f(u(s)) \Vert_{H_x^1(\R^5)}ds\biggl)\\
    	& \ \leq 3\bigl( \Vert u(T) \Vert_{H_x^1(\R^5)}+l^{\frac{1}{2}} \Vert u \Vert_{L_t^4L_x^5([T-l,T] \times \R^5)}^2 \Vert u \Vert_{L_t^{\infty}H_x^1([T-l,T] \times \R^5)} \bigl)\\
    	& \ \leq 3\Vert u \Vert_{H_x^1(\R^5)}(1+l^{\frac{1}{2}}\Upsilon^2).
    \end{align*}
    From Lemma \ref{continuity}, we can find that \[\Upsilon \lesssim \langle l \rangle^{\frac{1}{2}} \lesssim \eps^{-\frac{1}{16}},\ for \ l=\eps^{-\frac{1}{8}}.\]
    Combining this with \eqref{52eps} and \eqref{3uuu}, then
    \begin{equation}
    	\int_{T-l}^{T}\Vert f(u(s)) \Vert_{H_x^1(\R^5)}ds \lesssim \eps^{\frac{1}{4}+}\eps^{-\frac{1}{16}}\eps^{-\frac{1}{16}}=\eps^{\frac{1}{8}+},
    \end{equation}
    which means \eqref{fh1} holds.

    $\mathit{Step \ three}$. In this step, we will prove
    \begin{equation} \label{uin4103}
    	u(t,x) \in L_t^4L_x^{\frac{10}{3}}(\left[T,+\infty\right) \times \R^5)
    \end{equation}
    when \eqref{fh1} holds.

    In light of Duhamel formula \eqref{duhamel}, we have
    \begin{equation}
    	\begin{aligned}
    		\Vert u \Vert_{L_t^4L_x^{\frac{10}{3}}(\left[T,+\infty\right) \times \R^5)}
    		&\leq \biggl\Vert e^{i(t-T)\HHH}u(T) \biggl\Vert_{L_t^4L_x^{\frac{10}{3}}(\left[T,+\infty\right) \times \R^5)}\\
    		& \quad +\biggl\Vert \int_{T}^{t}e^{i(t-T)\HHH}(|\cdot|^{-3}\ast |u|^2)u(s)ds \biggl\Vert_{L_t^4L_x^{\frac{10}{3}}(\left[T,+\infty\right) \times \R^5)}.
    	\end{aligned}
    \end{equation}

    We can separate the proof into two parts. Firstly, we need to prove
    \begin{equation}\label{(t-T)H=0}
    	\lim_{T \to +\infty } \bigl \Vert e^{i(t-T)\HHH}u(T) \bigr \Vert_{L_t^4L_x^{\frac{10}{3}}(\left[T,+\infty\right) \times \R^5)} =0.
    \end{equation}
    Using Duhamel formula again, we have
    \begin{equation}\label{0+g+g}
    	e^{i(t-T)\HHH}u(T)=e^{it\HHH}u_0+iG_1+iG_2,
    \end{equation}
    where $I_1:=[0,T-l],I_2:=[T-l,T]$, and
    \begin{equation*}
    	G_j(t):=\int_{I_j}e^{i(t-s)\HHH}(|\cdot|^{-3}\ast |u|^2)u(s)ds, j=1,2.
    \end{equation*}

    \textbf{Estimate on} $e^{it\HHH}u_0$. Owing to \eqref{sup} and Strichartz estimates, we have
    \begin{equation}
    	\lim_{T \to +\infty } \bigl \Vert e^{it\HHH}u_0 \bigr \Vert_{L_t^4L_x^{\frac{10}{3}}(\left[T,+\infty\right) \times \R^5)} =0.
    \end{equation}

    \textbf{Estimate on} $G_1$. For
    \begin{equation}
    	\Vert G_1 \Vert_{L_t^4L_x^{\frac{10}{3}}(\left[T,+\infty\right) \times \R^5)} \leq \Vert G_1 \Vert_{L_t^4L_x^{\frac{5}{2}}(\left[T,+\infty\right) \times \R^5)}^{\frac{3}{4}} \Vert G_1 \Vert_{L_t^4L_x^{\infty}(\left[T,+\infty\right) \times \R^5)}^{\frac{1}{4}},
    \end{equation}
    On the one hand, using Strichartz estimates and Sobolev embedding, we have
    \begin{align*}
   		\Vert G_1 \Vert_{L_t^4L_x^{\frac{5}{2}}(\left[T,+\infty\right) \times \R^5)} &= \biggl\Vert  e^{i(t-T+l)\HHH} \int_{0}^{T-l} e^{i(t-T-s)\HHH}f(u(s))ds \biggr\Vert_{L_t^4L_x^{\frac{5}{2}}(\left[T,+\infty\right) \times \R^5)}\\
   		&\leq \bigl\Vert  e^{i(t-T+l)\HHH} u(T-l) \bigr\Vert_{L_t^4L_x^{\frac{5}{2}}(\left[T,+\infty\right) \times \R^5)}+\bigl\Vert  e^{it\HHH} u_0 \bigr\Vert_{L_t^4L_x^{\frac{5}{2}}(\left[T,+\infty\right) \times \R^5)}\\
   		& \leq 2\Vert u \Vert_{L_t^{\infty}H_x^1(\left[T,+\infty\right) \times \R^5)}.
    \end{align*}
    On the other hand, using  Lemma \ref{holder}  Lemma \ref{hls} and Lemma  \ref{dispersive}, we have
    \begin{align*}
   		&~ \Vert  G_1 \Vert_{L_t^4L_x^{\infty}(\left[T,+\infty\right) \times \R^5)}\\
   		 = & ~ \biggl\Vert \biggl\Vert \int_{0}^{T-l}e^{i(t-s)\HHH}(|\cdot|^{-3}\ast |u|^2)u(s)ds \biggr\Vert_{L_x^{\infty}(\R^5)} \biggr\Vert_{L_t^4(\left[T,+\infty\right) )}\\
   		\leq & ~ \biggl\Vert \int_{0}^{T-l}|t-s|^{-\frac{5}{2}} \Vert (|\cdot|^{-3}\ast |u|^2)u\Vert_{L_x^1(\R^5)}ds \biggr\Vert_{L_t^4(\left[T,+\infty\right) )}\\
   		\leq & ~ \biggl\Vert \int_{0}^{T-l}|t-s|^{-\frac{5}{2}} \Vert u\Vert_{L_x^{\frac{20}{9}}(\R^5)}^2 \Vert u \Vert_{L_x^2(\R^5)} ds \biggr\Vert_{L_t^4(\left[T,+\infty\right) )}\\
   		\leq & ~ \bigl\Vert ~ |t-T+l|^{-\frac{3}{2}}  \bigr\Vert_{L_t^4(\left[T,+\infty\right) )}\Vert u \Vert_{L_t^{\infty}H_x^1(\left[T,+\infty\right) \times \R^5)}^3\\
   		\leq & ~ l^{-\frac{5}{4}}\Vert u \Vert_{L_t^{\infty}H_x^1(\left[T,+\infty\right) \times \R^5)}^3.
    \end{align*}
    Combining the above two parts, we get
    \begin{equation}
    	\lim_{T \to +\infty } \Vert G_1 \Vert_{L_t^4L_x^{\frac{10}{3}}(\left[T,+\infty\right) \times \R^5)} =0,\  for\  l = \eps^{-\frac{1}{8}}.
    \end{equation}

    \textbf{Estimate on} $G_2$. Using Duhamel formula once again, we have
    \begin{align*}
        \biggl\Vert \int_{T-l}^{T} e^{i(t-s)\HHH}f(u(s))ds  \biggr\Vert_{L_t^4L_x^{\frac{10}{3}}}
   		\lesssim \int_{T-l}^{T}  \Vert f(u(s))\Vert_{\dot{W}_x^{\frac{1}{2},2}}ds
   		\leq {\frac{1}{2}} \int_{T-l}^{T}  \Vert f(u(s)) \Vert_{H_x^1}ds.
    \end{align*}
    We have proved \eqref{fh1}, that is to say
    \begin{equation}\label{g2=0}
    	\begin{aligned}
    		\lim_{T \to +\infty }  \Vert G_2 \Vert_{L_t^4L_x^{\frac{10}{3}}(\left[T,+\infty\right) \times \R^5)} =0.
    	\end{aligned}
    \end{equation}
    Combining with \eqref{0+g+g}-\eqref{g2=0}, we can obtain that \eqref{(t-T)H=0} holds.

    Secondly, we need to prove
    \begin{equation}\label{tT4103=0}
    	\lim_{T \to +\infty }\biggl\Vert \int_{T}^{t}e^{i(t-T)\HHH}(|\cdot|^{-3}\ast |u|^2)u(s)ds \biggl\Vert_{L_t^4L_x^{\frac{10}{3}}(\left[T,+\infty\right) \times \R^5)}=0.
    \end{equation}
    Using Sobolev embedding and Strichartz estimates, together with Lemma \ref{equivalence}, we can find
    \begin{align*}
   		& ~ \biggl\Vert  \int_{T}^{t}e^{i(t-T)\HHH}(|\cdot|^{-3}\ast |u|^2)u(s)ds \biggl\Vert_{L_t^4L_x^{\frac{10}{3}}(\left[T,+\infty\right) \times \R^5)}\\
   		 \lesssim & ~ \Vert  (|\cdot|^{-3}\ast |u|^2)u \Vert_{L_t^2\dot{W}_{x(V)}^{\frac{1}{2},\frac{10}{7}}(\left[T,+\infty\right) \times \R^5)}\sim \Vert  (|\cdot|^{-3}\ast |u|^2)u \Vert_{L_t^2\dot{W}_x^{\frac{1}{2},\frac{10}{7}}(\left[T,+\infty\right) \times \R^5)}.
    \end{align*}
    Then we estimate the space part by Lemma \ref{holder}, inequality of arithmetic-geometric mean, Lemma \ref{hls} and Lemma \ref{gn}. It shows that
    \begin{align*}
   		& ~ \Vert  (|\cdot|^{-3}\ast |u|^2)u \Vert_{\dot{W}_x^{\frac{1}{2},\frac{10}{7}}(\R^5)}\\
   		\leq & ~ \Vert (|\cdot|^{-3}\ast |u|^2)u \Vert_{L_x^{\frac{10}{7}}(\R^5)}^{\frac{1}{2}} \Vert(|\cdot|^{-3}\ast |u|^2)u \Vert_{\dot{W}_x^{1,\frac{10}{7}}(\R^5)}^{\frac{1}{2}}\\
   		\leq & ~ \frac{1}{2} \Vert (|\cdot|^{-3}\ast |u|^2)u \Vert_{W_x^{1,\frac{10}{7}}(\R^5)}\\
   		\leq & ~  \frac{1}{2} \bigl( \Vert |u|^2 \Vert_{L_x^{\frac{5}{3}}(\R^5)} \Vert u \Vert_{H_x^1(\R^5)} + \Vert (|\cdot|^{-3}\ast 2\Re (\bar{u} \nabla u))u \Vert_{L_x^{\frac{10}{7}}(\R^5)} \bigr)\\
   		\leq  & ~ \frac{3}{2} \Vert u \Vert_{L_x^{\frac{10}{3}}(\R^5)}^2 \Vert u \Vert_{H_x^1(\R^5)}, \,
    \end{align*}
 which yields 
    \begin{equation*}
    	\begin{aligned}
    		\biggl \Vert \frac{3}{2} \Vert u \Vert_{L_x^{\frac{10}{3}}(\R^5)}^2 \Vert u \Vert_{H_x^1(\R^5)} \biggr \Vert_{L_t^2(\left[T,+\infty\right))}
    		\leq \frac{3}{2} \Vert u \Vert_{L_t^4L_x^{\frac{10}{3}}(\left[T,+\infty\right) \times \R^5)}^2 \Vert u \Vert_{L_t^{\infty}H_x^1(\left[T,+\infty\right) \times \R^5)}.
    	\end{aligned}
    \end{equation*}
    According to \eqref{sup}, we know that \eqref{tT4103=0} holds. Thus \eqref{uin4103} holds.

    $\mathit{Step \ four}$. In this step, we will prove that there exists $u_{+}(x) \in H_x^1(\R^5), s.t.$
    \begin{equation}\label{ut-u+=0}
    	\lim_{t \to +\infty }\Vert u(t)-e^{it\HHH}u_{+} \Vert_{H_x^1(\R^5)}=0,
    \end{equation}
	when \eqref{uin4103} holds. Then, for $(\mu,\gamma,d)=(-1,3,5)$ in \eqref{NLHv}, the solution $u$ is global and scatters.

To prove this completely, we will firstly prove the existence of Cauchy Sequences. 
Using the Strichartz estimates, Lemma \ref{hls} and Lemma \ref{si}, we have
	\begin{align*}
		&~\biggl\Vert \int_{t_1}^{t_2}e^{-is\HHH}(|\cdot|^{-3}\ast |u|^2)u(s)ds \biggl\Vert_{H_x^1(\R^5)}\\
		=& ~ \biggl\Vert \int_{t_1}^{t_2}e^{i(t_2-s)\HHH}(|\cdot|^{-3}\ast |u|^2)u(s)ds \biggl\Vert_{H_x^1(\R^5)}\\
		\lesssim & ~ \Vert  (|\cdot|^{-3}\ast |u|^2)u \Vert_{L_t^2W_{x(V)}^{1,\frac{10}{7}}([t_1,t_2] \times \R^5)}\\
		 \sim & ~ \Vert  (|\cdot|^{-3}\ast |u|^2)u \Vert_{L_t^2W_x^{1,\frac{10}{7}}([t_1,t_2] \times \R^5)}\\
		\leq &~ 3\Vert u \Vert_{L_t^4L_x^{\frac{10}{3}}(\left[t_1,+\infty\right) \times \R^5)}^2 \Vert u \Vert_{L_t^{\infty}H_x^1(\left[t_1,+\infty\right) \times \R^5)},
	\end{align*}
	where we have used the fact that $e^{it\HHH}$ is a unitary group for any time $t$. 	
	Thus
	\begin{equation}\label{t1t2}
		\lim_{t_1,t_2 \to +\infty}\biggl\Vert \int_{t_1}^{t_2}e^{-is\HHH}(|\cdot|^{-3}\ast |u|^2)u(s)ds \biggl\Vert_{H_x^1(\R^5)}=0
	\end{equation}
	holds when \eqref{uin4103} holds.
	
	Set
	\begin{equation*}
		u_{+}=e^{-iT\HHH}u(T)-i\int_{T}^{+\infty}e^{-is\HHH}f(u(s))ds.
	\end{equation*}
	According to
	\begin{equation*}
		\Vert e^{-iT\HHH}u(T) \Vert_{H_x^1(\R^5)} = \Vert u(T) \Vert_{H_x^1(\R^5)} \leq \Vert u \Vert_{L_t^{\infty}H_x^1(\R \times \R^5)} < +\infty,
	\end{equation*}
	and \eqref{t1t2} we have
	\begin{equation*}
		\biggl\Vert \int_{T}^{+\infty}e^{i(t-s)\HHH}f(u(s))ds \biggl\Vert_{H_x^1(\R^5)} = \biggl\Vert \int_{T}^{+\infty}e^{-is\HHH}f(u(s))ds \biggl\Vert_{H_x^1(\R^5)} < +\infty.
	\end{equation*}
	Thus  $u_{+} \in H_x^1(\R^5)$.
	
	Similarly, by the Duhamel formula we have
	\begin{equation*}
		\begin{aligned}
			\Vert u(t)-e^{it\HHH}u_{+} \Vert_{H_x^1(\R^5)}&=\biggl\Vert \int_{t}^{+\infty}e^{i(t-s)\HHH}f(u(s))ds \biggl\Vert_{H_x^1(\R^5)}\\
			&= \biggl\Vert \int_{t}^{+\infty}e^{-is\HHH}f(u(s))ds \biggl\Vert_{H_x^1(\R^5)}
			& \to 0(t \to +\infty).
		\end{aligned}
	\end{equation*}
	Thus \eqref{ut-u+=0} holds.
	
	Finally, combining the above four steps, the proof of Theorem \ref{scattercriterion} is completed only if $\eps>0$ is arbitrarily small.
\end{proof}

\subsection{Morawetz estimate}
 In this subsection, we demonstrate Morawetz estimate. As is well-known, the decay estimates for our solutions can be described by applying the Morawetz estimate. 
Let us start with the following virial identity.
\begin{lemma}[Virial identity, \cite{meng}]\label{viriallemma}
       Let $V:\R^5 \to \R$,
       $\ph:\R^5 \to \R$ be a real function to be chosen later,
       and $u$ be a solution to \eqref{NLHv} with $(\mu,\gamma,d)=
 (-1,3,5)$. Define the function
	\begin{align}\label{vt}
		V_{\ph}(t):=\int \ph|u(t)|^2dx
	\end{align}
	and
	\begin{equation}\label{virial}
		\frac{d}{dt}V_{\ph}(t)=2\Im \int_{\R^5} \nabla \ph \cdot (\bar{u}\nabla u) dx:=M_{\ph}(t).
 	\end{equation}
	Then it holds that
	\begin{equation*}
		\begin{aligned}
			\frac{d}{dt} M_{\ph}(t) = &4 \sum_{k=1}^{5} \sum_{k=1}^5 \Re \int_{\R^5}\overline{u_k} u_j \ph_{kj}dx-\int_{\R^5} \vert u \vert^2\Delta^2\ph dx-2\int_{\R^5}\nabla \ph \cdot \nabla V|u|^2dx\\ &-3\int_{\R^5}\int_{\R^5}\frac{x-y}{|x-y|^5}|u(x)|^2|u(y)|^2 \cdot (\nabla \ph(x)-\nabla \ph (y))dxdy
		\end{aligned}
	\end{equation*}
	where we have used the symmetry of functional displacement.
\end{lemma}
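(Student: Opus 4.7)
The plan is to differentiate $M_\ph(t)$ in time by directly substituting the equation, and then integrate by parts to collect the four terms on the right-hand side. The four terms correspond to the four pieces on the right of \eqref{NLHv}: the kinetic operator $\Delta$ produces the Hessian pairing $4\sum_{j,k}\Re\int \overline{u_k}u_j\ph_{jk}\,dx$ together with the bi-Laplacian error $-\int|u|^2\Delta^2\ph\,dx$; the potential $V$ produces $-2\int\nabla\ph\cdot\nabla V|u|^2\,dx$; and the Hartree nonlinearity produces the nonlocal double integral. No step requires more than repeated integration by parts, so the only real content is bookkeeping.

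First, from $i\partial_tu=-\Delta u+Vu-(|\cdot|^{-3}\ast|u|^2)u$ (case $\mu=-1$), I would write $\partial_tu=i(\Delta u-Vu+(|\cdot|^{-3}\ast|u|^2)u)$ and compute
\begin{equation*}
\frac{d}{dt}M_\ph(t)=2\Im\int_{\R^5}\nabla\ph\cdot(\partial_t\bar u\,\nabla u+\bar u\,\nabla\partial_tu)\,dx.
\end{equation*}
Splitting $\partial_tu$ into its three contributions produces three pieces $K,P,N$. For the kinetic piece $K$, I would use the identity $\Im(\bar u\,\Delta u)=\partial_j\Im(\bar u\,\partial_j u)$ to move one derivative, then integrate by parts a second time against $\nabla\ph$. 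After symmetrization this yields
\begin{equation*}
K=4\sum_{j,k=1}^{5}\Re\int_{\R^5}\ph_{jk}\,\overline{u_k}u_j\,dx-\int_{\R^5}|u|^2\Delta^2\ph\,dx,
\end{equation*}
which is the standard Morawetz kinetic identity.

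For the potential piece $P$, the substitution gives a term of the form $-2\Im\int\nabla\ph\cdot\bar u\,\nabla(Vu)\,dx+\text{c.c.}$; using the Leibniz rule to split $\nabla(Vu)=V\nabla u+u\nabla V$, the terms containing $V\nabla u$ cancel against their complex conjugates (they contribute only to the $\Im(V|\nabla u|^2)$ type expression, which vanishes), leaving $P=-2\int\nabla\ph\cdot\nabla V\,|u|^2\,dx$. For the Hartree piece $N$, writing the nonlinearity as $N(u)(x)=\bigl(\int|u(y)|^2|x-y|^{-3}dy\bigr)u(x)$, the same manipulation yields an expression of the form $-2\Im\int\nabla\ph(x)\cdot\bar u(x)\,\nabla_x(N(u)(x))\,dx+\text{c.c.}$, and only the derivative falling on the convolution kernel survives. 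Using
\begin{equation*}
\nabla_x|x-y|^{-3}=-3\frac{x-y}{|x-y|^5}
\end{equation*}
and then symmetrizing in $(x,y)$ (which produces the factor $\nabla\ph(x)-\nabla\ph(y)$) gives exactly the claimed double integral.

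The only delicate step is the Hartree term: one must verify that after combining the contribution from $\partial_tu$ and its conjugate $\partial_t\bar u$, the derivative that falls on the factor $u(x)$ in $N(u)$ cancels, leaving only the kernel-derivative contribution, and then carry out the $x\leftrightarrow y$ symmetrization without sign error. Everything else is a routine double integration by parts, and the boundary terms vanish under the $H^1$ regularity provided by Proposition \ref{lwp} together with a standard truncation-and-limit argument (which I would either spell out briefly or justify by approximating $u$ by Schwartz data and passing to the limit).
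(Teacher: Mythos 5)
Your computation is correct and is exactly the standard direct argument: substitute the equation into $\frac{d}{dt}M_\ph$, and the claimed cancellations do occur (the $V\nabla u$ terms cancel in the potential piece, and in the Hartree piece only $|u|^2\nabla_x\bigl(|\cdot|^{-3}\ast|u|^2\bigr)$ survives, which after antisymmetrizing the kernel $\frac{x-y}{|x-y|^5}$ in $x\leftrightarrow y$ gives the factor $\nabla\ph(x)-\nabla\ph(y)$ with the coefficient $-3$). The paper itself offers no proof of this lemma --- it simply cites \cite{meng}, where the identity is established by the same bookkeeping for $V\equiv 0$ --- so your write-up, which also supplies the extra term $-2\int\nabla\ph\cdot\nabla V|u|^2\,dx$ coming from the potential, is consistent with and slightly more complete than what the paper provides.
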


Under Lemma \ref{viriallemma}, we can further find Morawetz estimate, which is fundamental to the proof of scattering.

\begin{proposition}[Morawetz estimate]\label{morawetz}
	For $(\mu,\gamma,d)=(-1,3,5)$ in \eqref{NLHv}, let $V:\R^5 \to \R$ satisfy  $\mathbf{(H1)}$ and $\mathbf{(H2)}$, $V \geq 0$, $x \cdot \nabla V \leq 0$, $x \cdot \nabla V \in L^{\frac{5}{2}}$. Let $u_0 \in H^1$ be radically symmetric and  satisfy \eqref{u0q} and \eqref{lambda0q}. Then for any $T>0$ and any $R\geq R_0$ with $R_0$ as in \eqref{coer3}, the corresponding global solution to \eqref{NLHv} 
 with  satisfies
	\begin{equation*}
		\frac{1}{T}\int_{0}^{T}P(\chi_Ru)dt\lesssim_{\delta,u} \frac{1}{T}+\frac{1}{R}+o_R(1).
	\end{equation*}
\end{proposition}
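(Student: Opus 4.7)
My plan is to apply the virial identity from Lemma \ref{viriallemma} with a Morawetz-type radial cutoff weight $\phi_R$ adapted to the ball $B(0,R)$, and then extract a pointwise lower bound of the form $\tfrac{d}{dt}M_{\phi_R}(t)\gtrsim P(\chi_R u)-\mathcal E_R(t)$ using the coercivity in Property \ref{coer}. A natural choice is $\phi_R\in C^\infty(\R^5)$ radial with $\phi_R(x)=|x|^2/2$ for $|x|\leq R$, $\phi_R$ constant for $|x|\geq 2R$, radial Hessian $\phi_R''(r)\in[0,1]$ and $\phi_R'(r)\leq r$ everywhere. On $B(R)$ this yields $\partial_j\partial_k\phi_R=\delta_{jk}$, $\nabla\phi_R=x$, and $\Delta^2\phi_R=0$; the identity $(x-y)\cdot(\nabla\phi_R(x)-\nabla\phi_R(y))=|x-y|^2$ on $B(R)\times B(R)$ makes the Hartree contribution on the diagonal collapse to $-3P(\chi_R u)$.

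Plugging $\phi_R$ into Lemma \ref{viriallemma} I would obtain
\begin{equation*}
\tfrac{d}{dt}M_{\phi_R}(t)=4\!\int_{|x|\leq R}\!|\nabla u|^2\,dx-3P(\chi_Ru)-2\!\int_{|x|\leq R}\!x\cdot\nabla V|u|^2\,dx-\mathcal E_R(t),
\end{equation*}
where $\mathcal E_R(t)$ gathers all annular/exterior contributions: the bi-Laplacian term $\int|u|^2\Delta^2\phi_R$, the outer potential term $\int_{|x|\geq R}\nabla\phi_R\cdot\nabla V|u|^2$, and the Hartree off-diagonal piece involving configurations with at least one of $x,y$ outside $B(R)$. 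I would then rewrite $4\int_{B(R)}|\nabla u|^2\simeq 4\|\nabla_V(\chi_R u)\|_{L^2}^2-4\!\int V|\chi_R u|^2$ modulo a cutoff error absorbed into $\mathcal E_R$, and invoke the third bullet of Property \ref{coer}: there is $\delta'>0$ uniform in $t$ with $\|\nabla_V(\chi_R u)\|_{L^2}^2-\tfrac34 P(\chi_R u)\geq \delta'P(\chi_R u)$. The hypotheses $V\geq 0$ and $x\cdot\nabla V\leq 0$ enter here in two distinct places: $V\geq 0$ is exactly what guarantees the coercivity above (through Property \ref{coer}), while $x\cdot\nabla V\leq 0$ renders the virial potential contribution non-negative; combined with an integration by parts that balances $-4\int V|\chi_R u|^2$ against $-2\int x\cdot\nabla V|u|^2$, these hypotheses produce the clean pointwise bound $\tfrac{d}{dt}M_{\phi_R}(t)\geq 4\delta'P(\chi_R u)-\mathcal E_R(t)$.

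The three components of $\mathcal E_R$ are then handled in turn. The bi-Laplacian piece is $O(R^{-2}\|u\|_{L^2}^2)$ because $|\Delta^2\phi_R|\lesssim R^{-2}$ is supported in the annulus $R\leq|x|\leq 2R$. The outer potential piece is bounded via H\"older and Sobolev (Lemma \ref{si}) by $\|x\cdot\nabla V\|_{L^{5/2}(|x|\geq R)}\|u\|_{L^{10/3}}^2$, which is $o_R(1)$ thanks to the hypothesis $x\cdot\nabla V\in L^{5/2}(\R^5)$. The Hartree off-diagonal piece, which is the central ingredient, is handled using the radial Sobolev decay $|x|^2|u(x)|\lesssim\|u\|_{H^1}$ from Lemma \ref{rse} in combination with Lemma \ref{hls}, yielding an $O(R^{-1})$ bound. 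Integrating the pointwise lower bound over $[0,T]$, estimating the telescoping term via Cauchy--Schwarz and the uniform $H^1$ bound from \eqref{coer1}, and dividing by $T$ produces the announced estimate.

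The principal technical obstacle is the Hartree off-diagonal contribution to $\mathcal E_R$. Unlike for a local power nonlinearity, the integrand in Lemma \ref{viriallemma} couples pairs of points $(x,y)$, so one must control configurations where one variable lies in the transition annulus $R\leq|x|\leq 2R$ or outside while the other stays interior. The radial Sobolev decay of Lemma \ref{rse} is indispensable here, playing the role that compact support would play for a pure-power equation. A secondary subtlety is reconciling the ``raw'' virial quantity $4\int|\nabla u|^2-3P(\chi_R u)$ with the $\nabla_V$-coercivity form $\|\nabla_V\cdot\|_{L^2}^2-\tfrac34 P$, which forces the simultaneous use of both sign hypotheses on $V$ and a careful integration by parts on the potential virial term.
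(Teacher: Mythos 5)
Your overall architecture (truncated virial identity, coercivity from Property \ref{coer} applied to $\chi_R u$, radial Sobolev embedding for the off-diagonal Hartree errors, $\Vert x\cdot\nabla V\Vert_{L^{5/2}(|x|>R)}=o_R(1)$ for the potential tail, then the fundamental theorem of calculus) is the same as the paper's. But your choice of weight contains a fatal flaw. You take $\phi_R=|x|^2/2$ on $B(R)$ and \emph{constant} for $|x|\ge 2R$, while also asserting $\phi_R''(r)\in[0,1]$; these are incompatible, since $\phi_R'(R)=R>0=\phi_R'(2R)$ forces $\phi_R''\approx -1$ somewhere on the annulus. This is not a cosmetic issue: for radial $u$ the Hessian contribution to $\tfrac{d}{dt}M_{\phi_R}$ reduces to $4\int\phi_R''(r)|\nabla u|^2\,dx$, so a weight that flattens out produces a \emph{negative} term of size $\int_{R<|x|<2R}|\nabla u|^2\,dx$. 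There is no spatial decay of the gradient available (radial Sobolev decays $u$, not $\nabla u$), so this term is $O(1)$ uniformly in $t$ and $R$, not $o_R(1)$; it cannot be absorbed into your $\mathcal{E}_R$, and the time-averaged estimate degenerates. This is exactly why the paper takes $\ph(x)=3R|x|$ for $|x|\ge 2R$: the weight grows linearly, the radial Hessian can be kept nonnegative throughout the transition region, and the exterior Hessian term becomes $\tfrac{R}{|x|}|\slashed{\nabla}u|^2$, which vanishes identically for radial data. (The price — $\sup_t|M_\ph(t)|\lesssim R$ rather than $O(1)$ — only affects the $1/T$ term.)

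A second, smaller gap is your treatment of the potential on $B(R)$. You rewrite $4\int_{B(R)}|\nabla u|^2\simeq 4\Vert\nabla_V(\chi_Ru)\Vert_{L^2}^2-4\int V|\chi_Ru|^2$ and propose to cancel the leftover $-4\int V|\chi_Ru|^2$ against $-2\int x\cdot\nabla V\,|u|^2$ by integration by parts. Under the stated hypotheses this balance fails: $V\ge 0$ with $x\cdot\nabla V\le 0$ permits $V$ to be essentially constant on $B(R)$, in which case $x\cdot\nabla V\approx 0$ there while $\int V|\chi_Ru|^2>0$, so the good term cannot dominate the bad one. The correct (and simpler) route is to keep the plain gradient: since $V\ge 0$ gives $\Vert\nabla f\Vert_{L^2}\le\Vert\nabla_V f\Vert_{L^2}$, the smallness hypothesis \eqref{coer3} transfers to the unperturbed quantity $\Vert\chi_Ru\Vert_{L^2}\Vert\nabla(\chi_Ru)\Vert_{L^2}$, and the Gagliardo--Nirenberg coercivity then yields $\Vert\nabla(\chi_Ru)\Vert_{L^2}^2-\tfrac34P(\chi_Ru)\ge\delta'P(\chi_Ru)$ directly, with no leftover $\int V|\chi_Ru|^2$ to dispose of. With those two corrections — the linearly growing exterior weight and the gradient-only coercivity — your argument lines up with the paper's proof.
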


\begin{proof}
	Let $\ph(x)$ in \eqref{virial} be a hybrid function for fixed $R \gg 1$
	\begin{equation*}
		\ph(x)=\left\{
		\begin{aligned}
			&|x|^2, \quad |x|\leq R,\\
			&3R|x|, \quad |x|\geq 2R,\\
		\end{aligned}
		\right .
	\end{equation*}
	and $\partial_r \ph = \nabla \ph \cdot \frac{x}{|x|}$, which satisfies $|\partial_{\alpha} \ph(x)| \leq C_{\alpha}R|x|^{-|\alpha|+1}$ when $R<|x|<2R$, $\partial_r \ph \geq 0$ and $\partial_r^2 \ph \geq 0$. Under these conditions, the matrix $\ph_{jk}$ is nonnegative.Then we have
	\begin{align}\label{d/dt}
		\frac{d}{dt}M_{\ph}(t) \notag \geq & ~ \biggl( 8\sum_{k=1}^{5}\sum_{j=1}^{5}\Re\int_{|x|\leq R}\overline{u_k}u_j\delta_{kj}dx-6\int_{|x|\leq R}\int_{\R^5}\frac{|u(x)|^2|u(y)|^2}{|x-y|^3}dydx\\ 
          \notag
		&-4\int_{|x|\leq R}x\cdot\nabla V|u(t)|^2dx    \biggr)-\biggl( 2\int_{|x|\geq 2R}\nabla \ph \cdot \nabla V|u(t)|^2dx  \biggr)\\ 
		&+12\sum_{k=1}^{5}\sum_{j=1}^{5}\Re\int_{|x|\geq 2R}\frac{R}{|x|}\biggl[\delta_{jk}-\frac{x_j}{|x|}\frac{x_k}{|x|} \biggr] \overline{u_k}u_jdx\\    \notag 
		&-9\int_{|x|\geq 2R}\int_{\R^5}\frac{|x-y|}{|x-y|^5}|u(x)|^2|u(y)|^2\cdot \biggl(\frac{Rx}{|x|}-\frac{Ry}{|y|}\biggr)dydx\\ \notag
		&+24\int_{|x|\geq 2R}\frac{|u|^2R}{|x|^3}dx- \biggl( 2\int_{R<|x|<2R}\nabla \ph \cdot \nabla V|u(t)|^2dx  \biggr)\\	 \notag		
		&+\int_{R<|x|<2R}\mathcal{O}\biggl(\frac{|u|^2R}{|x|^3}\biggr)dx+\int_{R<|x|<2R}\int_{\R^5}\mathcal{O}\biggl( \frac{|u(x)|^2|u(y)|^2}{|x-y|^3} \biggr)dydx. \notag
	\end{align}
	On one hand, from Proposition \ref{morawetz}, we know the fact $x \cdot \nabla V \leq 0$. Then it holds that
	\begin{equation*}
		-4\int_{|x|\leq R}x\cdot\nabla V|u(t)|^2dx \geq 0.
	\end{equation*}
	On the other hand, owing to the continuity of the hybrid function $\ph(x)$, we can separate the interval $(R,+\infty)$ into $(R,2R)$ and $\left[2R,+\infty\right)$. Using the fact $|\nabla \ph \cdot \nabla V|\leq \dfrac{5}{2} |x\cdot \nabla V|$ and $x \cdot \nabla V \in L^{\frac{5}{2}}$,  the Sobolev embedding implies that
	\begin{equation*}
		\begin{aligned}
			& ~ \biggl| \int_{|x|> R}\nabla \ph \cdot \nabla V|u(t)|^2dx \biggr| = \biggl| \Big( \int_{R<|x|<2R}+\int_{|x|\geq 2R} \Big) \nabla \ph \cdot \nabla V|u(t)|^2dx \biggr|\\
			\lesssim & ~ \int_{|x|> R}|x \cdot \nabla V||u(t)|^2dx 
			\lesssim \Vert x \cdot \nabla V \Vert_{L_{|x|>R}^\frac{5}{2}} \Vert u(t) \Vert_{L^\frac{10}{3}}^2\\
			\lesssim & ~ \Vert x \cdot \nabla V \Vert_{L_{|x|>R}^\frac{5}{2}} \Vert \nabla_V u(t) \Vert_{L^2}^2=o_R(1).
		\end{aligned}
	\end{equation*}
	Then we can simplify \eqref{d/dt} into the following equation:
		\begin{align}
			\frac{d}{dt}M_\ph(t) \geq &  \biggl( 8\sum_{k=1}^{5}\sum_{j=1}^{5}\Re\int_{|x|\leq R}\overline{u_k}u_j\delta_{kj}dx-6\int_{|x|\leq R}\int_{\R^5}\frac{|u(x)|^2|u(y)|^2}{|x-y|^3}dydx \biggr) \nonumber \\
			&  + 12 \sum_{k=1}^{5}\sum_{j=1}^{5}\Re\int_{|x|\geq 2R}\frac{R}{|x|}\biggl[\delta_{jk}-\frac{x_j}{|x|}\frac{x_k}{|x|} \biggr] \overline{u_k}u_jdx+24\int_{|x|\geq 2R}\frac{|u|^2R}{|x|^3}dx \nonumber \\
			& -9\int_{|x|\geq 2R}\int_{\R^5}\frac{|x-y|}{|x-y|^5}|u(x)|^2|u(y)|^2\cdot \biggl(\frac{Rx}{|x|}-\frac{Ry}{|y|}\biggr)dydx \nonumber \\
			& +\int_{R<|x|<2R}\mathcal{O}\biggl(\frac{|u|^2R}{|x|^3}\biggr)dx+\int_{R<|x|<2R}\int_{\R^5}\mathcal{O}\biggl( \frac{|u(x)|^2|u(y)|^2}{|x-y|^3} \biggr)dydx + o_R(1) \nonumber \\ 
                := & ~ (I) + (II) + (III) + (IV) + (V) + (VI) + o_R(1). \label{d/dtnew}
		\end{align}
	We will make full use of Lemma \ref{rse} to estimate every terms in \eqref{d/dtnew}.
	\item[$\bullet$] For $ (I) $, according to Property \ref{coer}, we decompose the  integral region and get
	\begin{align*}
		(I)&=8\sum_{k=1}^{5}\sum_{j=1}^{5}\Re\int_{|x|\leq R}\overline{u_k}u_j\delta_{kj}dx-6\int_{|x|\leq R}\int_{\R^5}\frac{|u(x)|^2|u(y)|^2}{|x-y|^3}dydx\\
		&=8\int_{|x|\leq R}|\nabla u|^2dx-6\int_{|x|\leq \frac{R}{2}}\int_{|y|\leq \frac{R}{2}}\frac{|u(x)|^2|u(y)|^2}{|x-y|^3}dydx\\
		&\quad -6\int_{\frac{R}{2}<|x|\leq R}\int_{|y|\leq \frac{R}{2}}\frac{|u(x)|^2|u(y)|^2}{|x-y|^3}dydx-6\int_{|x|\leq R}\int_{|y|> \frac{R}{2}}\frac{|u(x)|^2|u(y)|^2}{|x-y|^3}dydx\\
		&\leq 8 \biggl[  \Vert \nabla_V \chi_Ru(t) \Vert_{L_x^2(\R^5)}^2-\frac{3}{4}P(\chi_Ru(t)) \biggr]\\
		&\quad -6\int_{\frac{R}{2}<|x|\leq R}\int_{|y|\leq \frac{R}{2}}\frac{|u(x)|^2|u(y)|^2}{|x-y|^3}dydx-6\int_{|x|\leq R}\int_{|y|> \frac{R}{2}}\frac{|u(x)|^2|u(y)|^2}{|x-y|^3}dydx\\
		&\lesssim \delta'P(\chi_Ru)-(I)'-(I)'',
	\end{align*}
	where
	\[(I)'=6\int_{\frac{R}{2}<|x|\leq R}\int_{|y|\leq \frac{R}{2}}\frac{|u(x)|^2|u(y)|^2}{|x-y|^3}dydx\]
	and
	\[(I)''=6\int_{|x|\leq R}\int_{|y|> \frac{R}{2}}\frac{|u(x)|^2|u(y)|^2}{|x-y|^3}dydx.\]
	We can compute
	\begin{align*}
		(I)'&=6\int_{\frac{R}{2}<|x|\leq R}\int_{|y|\leq \frac{R}{2}}\frac{|u(x)|^2|u(y)|^2}{|x-y|^3}dydx\\
		&\lesssim \frac{C}{R^2}\Vert u \Vert_{L_x^2(\R^5)}^\frac{1}{2}\Vert \nabla u \Vert_{L_x^2(\R^5)}^\frac{1}{2}\int_{\frac{R}{2}<|x|\leq R}\int_{|y|\leq \frac{R}{2}}\frac{|u(x)||u(y)|^2}{|x-y|^3}dydx\\
		&\lesssim \frac{C}{R^2}\Vert u \Vert_{L_x^2(\R^5)}^\frac{3}{2}\Vert \nabla u \Vert_{L_x^2(\R^5)}^\frac{1}{2}\biggl \Vert \int_{\R^5}\frac{|u(y)|^2}{|x-y|^3}dy\biggr \Vert_{L_x^2(\R^5)}\\
		&\lesssim \frac{C}{R^2}\Vert u \Vert_{L_x^2(\R^5)}^\frac{3}{2}\Vert \nabla u \Vert_{L_x^2(\R^5)}^\frac{1}{2}\Vert u \Vert_{L^{\frac{20}{9}}(\R^5)}^2\\
		&\lesssim \frac{C}{R^2}\Vert u \Vert_{L_x^2(\R^5)}^3\Vert \nabla u \Vert_{L_x^2(\R^5)}.
	\end{align*}
	Likewise,
	\[(I)''	\leq \frac{C}{R^2}\Vert u \Vert_{L_x^2(\R^5)}^3\Vert \nabla u \Vert_{L_x^2(\R^5)}.\]
	
	\item[$\bullet$] For $(II)$, because of $\slashed{\nabla}u=\nabla u-\frac{x}{|x|}\partial_ru$, and $\partial_ru=\frac{x}{|x|}\nabla u$, we have
	\[ \slashed{\nabla}u=\nabla u-\frac{x}{|x|}\frac{x}{|x|}\nabla u .\]
	Thus
	\[|\slashed{\nabla}u|^2=|\nabla u|^2 - \sum_{j=1}^{5}\sum_{k=1}^{5}\frac{x_j}{|x|}\frac{x_k}{|x|}u_j\overline{u_k}.\]
	Considering that $u$ is radial, then
	\begin{equation*}
		\begin{aligned}
			(II)&=4\sum_{k=1}^{5}\sum_{j=1}^{5}\Re\int_{|x|\geq 2R}\frac{R}{|x|}\biggl[\delta_{jk}-\frac{x_j}{|x|}\frac{x_k}{|x|} \biggr] \overline{u_k}u_jdx\\
			&=4\Re\int_{|x|\geq 2R}\frac{R}{|x|}\biggl[ |\nabla u|^2 - \sum_{j=1}^{5}\sum_{k=1}^{5}\frac{x_j}{|x|}\frac{x_k}{|x|}u_j\overline{u_k} \biggr]dx\\
			&=4\int_{|x|\geq 2R}\frac{R}{|x|}|\slashed{\nabla}u|^2=0.
		\end{aligned}
	\end{equation*}
	
	\item[$\bullet$] For $(III)$, $(IV)$, $(V)$, $(VI)$, we can similarly find 
 \begin{align*}
    & (III) =  \int_{|x|\geq 2R}\frac{|u|^2R}{|x|^3}dx \geq 0,\\
    & |(IV)|\leq  \frac{C}{R}\Vert u \Vert_{L_x^2(\R^5)}^2 \Vert \nabla u \Vert_{L_x^2(\R^5)}^2,\\
      &(V) =  \int_{R<|x|<2R}\mathcal{O}\biggl(\frac{|u|^2R}{|x|^3}\biggr)dx \lesssim \frac{1}{R^2}\Vert u \Vert_{L_x^2(\R^5)},\\
       & |(VI)| \leq\frac{C}{R^2}\Vert u \Vert_{L_x^2(\R^5)}^3 \Vert \nabla u \Vert_{L_x^2(\R^5)}.
 \end{align*}
	
	Combining above inequalities and \eqref{d/dtnew}, we discard nonnegative terms and deduce
	\begin{equation*}
		\begin{aligned}
			\frac{d}{dt}M_\ph(t)&=(I) + (II) + (III) + (IV) + (V) + (VI) + o_R(1)\\
			&\gtrsim \delta'P(\chi_Ru)-(I)'-(I)''+(IV)+(V)+(VI)+o_R(1),
		\end{aligned}
	\end{equation*}
	which implies
	\begin{equation*}
		\begin{aligned}
			\delta'P(\chi_Ru) &\lesssim \frac{d}{dt}(M_\ph(t))+(I)'+(I)''+|(IV)|+|(V)|+|(VI)|+o_R(1)\\
			&\lesssim \frac{d}{dt}(M_\ph(t))+\frac{1}{R^2}+\frac{1}{R}+o_R(1)\\
			&\lesssim \frac{d}{dt}(M_\ph(t))+\frac{1}{R}+o_R(1).
		\end{aligned}
	\end{equation*}
	The fundamental theorem of calculus tells us
	\[\delta'\int_{0}^{T}P(\chi_Ru)dt \lesssim \sup_{t \in [0,T]}|M(t)|+\frac{T}{R}+T \cdot o_R(1),\]
	that is
	\[\frac{1}{T} \int_{0}^{T}P(\chi_Ru)dt \lesssim_{\delta,u} \frac{1}{T}+\frac{1}{R}+o_R(1).\]
\end{proof}

\begin{remark}
    Based on the condition that $u_0$ is radial, we find the classical Morawetz estimate to prove the boundness of Strichartz norm, which is crucial for the proof of scattering. Actually, we can employ the interaction Morawetz estimate to the proof when considering the non-radial case. 
\end{remark}

\section{Proof of Blow Up}
In this section, we will prove blow up result.

Firstly, using Hardy inequality
\[\int_{0}^\infty\biggl[\frac{1}{x}\int_{0}^xf(\xi)d\xi\biggr]^pdx\leq \biggl(\frac{p}{p-1}\biggr)^p\int_{0}^\infty f^p(x)dx, \quad f(x)\geq 0, \quad p>1  \]
and the conservation of mass, we have
\begin{equation*}
	\begin{aligned}
		\Vert u_0 \Vert_{L^2(\R^5)}^2=\int_{\R^5}|u(x,t)|^2dx=\int_{\R^5}|x||u|\cdot \frac{|u|}{|x|}dx\leq \Vert xu(t) \Vert_{L^2}\Vert u(t)\Vert_{\dot{H}^1}.
	\end{aligned}
\end{equation*}
It is worth mention that if $u_0 \in L^2(|x|^2dx)$, then the corresponding solution belongs to $L^2(|x|^2dx)$. If we want to prove the solution blow up in finite time,  thus we only need to prove
\begin{equation}\label{blowup1}
	\Vert xu(t) \Vert_{L^2}^2 \to 0 \quad(t\to T^*).
\end{equation}
According to  the classical argument of Glassey \cite{glassey}, if we want to prove \eqref{blowup1},  then we only need to prove\begin{equation}\label{blowup2} \frac{d^2}{dt^2} \Vert xu(t) \Vert_{L^2}^2<0\end{equation} for all $t \in \left[0,T^*\right)$.

Thus, before to prove the blow up part in Theorem \ref{conclusion}, we should consider the quantity $\frac{d^2}{dt^2} \Vert xu(t) \Vert_{L^2}^2$. Therefore, we first give the following lemma.

\begin{lemma}\label{kulemma}
	In case of $\ph (x) =|x|^2$ in \eqref{virial}, we have
	\begin{equation}
		\frac{d}{dt} \Vert xu(t) \Vert_{L^2}^2= M_{|x|^2}(t),\quad \frac{d^2}{dt^2} \Vert xu(t) \Vert_{L^2}^2=\frac{d}{dt} M_{|x|^2}(t)=8K(u(t)),
	\end{equation}
where 	
\begin{equation}\label{ku}
		K(u(t))=\Vert \nabla u(t) \Vert_{L^2}^2-\frac{1}{2} \int_{\R^5}x\cdot \nabla V|u(t)|^2dx-\frac{3}{4} P(u),
\end{equation}
where $P(u)=\int_{\R^5}\int_{\R^5}\frac{|u(x)|^2|u(y)|^2}{|x-y|^3}dxdy$.
Thus there exists some $\delta>0$ satisfying
\begin{equation}\label{kdelta}
		\sup_{t \in \left[0,T^*\right)} K(u(t)) \leq - \delta
	\end{equation} for all t in the existence time.
\end{lemma}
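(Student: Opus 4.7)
The proof splits cleanly into two independent pieces: first, deriving the two identities for the time derivatives of $\|xu(t)\|_{L^2}^2$; second, establishing the uniform negativity $K(u(t)) \leq -\delta$.

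For the identities, I would simply specialize the general virial computation in Lemma~\ref{viriallemma} to the choice $\varphi(x) = |x|^2$. Then $\nabla\varphi = 2x$, $\varphi_{jk} = 2\delta_{jk}$, and $\Delta^2\varphi = 0$, so the definition \eqref{vt} gives $V_{|x|^2}(t) = \|xu(t)\|_{L^2}^2$, and \eqref{virial} directly yields $\tfrac{d}{dt}\|xu(t)\|_{L^2}^2 = M_{|x|^2}(t)$. Substituting the same choice in the formula for $\tfrac{d}{dt}M_\varphi(t)$ from Lemma~\ref{viriallemma} produces, term by term, $8\|\nabla u\|_{L^2}^2$ (from the Hessian term), $0$ (biharmonic term), $-4\int x\cdot\nabla V|u|^2\,dx$ (potential term), and $-6P(u)$ (nonlocal term, using $\nabla\varphi(x)-\nabla\varphi(y) = 2(x-y)$). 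Collecting these gives $\tfrac{d}{dt}M_{|x|^2}(t) = 8K(u(t))$.

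For the coercivity estimate \eqref{kdelta}, the plan is to first rewrite $K(u)$ purely in terms of $\|\nabla_V u\|_{L^2}^2$ and $E(u)$, then exploit both hypotheses \eqref{u0q} and \eqref{lambda0>q} through a trapping argument mirroring the one in Property~\ref{coer}. Using $\|\nabla_V u\|_{L^2}^2 = \|\nabla u\|_{L^2}^2 + \int V|u|^2\,dx$ together with $E(u) = \tfrac12\|\nabla_V u\|_{L^2}^2 - \tfrac14 P(u)$, one computes
\begin{equation*}
K(u) = -\tfrac12 \|\nabla_V u\|_{L^2}^2 - \tfrac12\int(2V + x\cdot\nabla V)|u|^2\,dx + 3E(u).
\end{equation*}
The hypothesis $2V + x\cdot\nabla V \geq 0$ discards the middle term, yielding the pointwise bound $K(u) \leq -\tfrac12\|\nabla_V u\|_{L^2}^2 + 3E(u)$, which multiplied by the conserved mass $M(u_0)$ becomes
\begin{equation*}
K(u(t)) M(u_0) \leq -\tfrac12\|\nabla_V u(t)\|_{L^2}^2 M(u_0) + 3E(u_0)M(u_0).
\end{equation*}

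The main obstacle is the trapping step supplying a uniform lower bound on $\|\nabla_V u(t)\|_{L^2}^2 \|u(t)\|_{L^2}^2$. I would repeat the convexity computation from the proof of Property~\ref{coer}: with $f(t) := \|u(t)\|_{L^2}\|\nabla_V u(t)\|_{L^2}/(\|Q\|_{L^2}\|\nabla Q\|_{L^2})$, the sharp Gagliardo--Nirenberg identity \eqref{cgn} together with the Pohozaev relations \eqref{pq4}--\eqref{mqeq} give $g(f(t)) \leq M(u)E(u)/(M(Q)E(Q))$ where $g(f) = 3f^2 - 2f^3$. By \eqref{u0q} there exists $\delta_0 > 0$ with $M(u)E(u) \leq (1-\delta_0)M(Q)E(Q)$. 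Since $g$ is strictly decreasing on $(1,\infty)$ with $g(1) = 1$, the hypothesis \eqref{lambda0>q} places $f(0)$ on this decreasing branch, and continuity of $f$ together with $g(f(t)) \leq 1-\delta_0 < 1$ prevents $f(t)$ from crossing $1$. Hence $f(t) \geq f_0 > 1$ uniformly, where $f_0$ solves $g(f_0) = 1-\delta_0$. Translating back, $\|\nabla_V u(t)\|_{L^2}^2 M(u_0) \geq 6 f_0^2 M(Q)E(Q)$, and plugging this into the bound on $K$ gives
\begin{equation*}
K(u(t))M(u_0) \leq 3M(Q)E(Q)\bigl[(1-\delta_0) - f_0^2\bigr] < 0
\end{equation*}
uniformly in $t$, which is the desired \eqref{kdelta} after dividing by $M(u_0)$. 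The delicate part is verifying that the strict initial inequality \eqref{lambda0>q}, combined with the energy deficit from \eqref{u0q}, truly produces a strict gap $f_0 > 1$; this is where the monotonicity of $g$ on the appropriate branch is essential.
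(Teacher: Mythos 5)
Your proposal is correct and follows essentially the same route as the paper: the same specialization of the virial identity to $\ph(x)=|x|^2$, the same algebraic bound $K(u)M(u)\le 3E(u_0)M(u_0)-\tfrac12\bigl(\Vert\nabla_V u\Vert_{L^2}\Vert u\Vert_{L^2}\bigr)^2$ obtained from $2V+x\cdot\nabla V\ge 0$, and the same conclusion via the energy gap \eqref{u0q} and the lower bound \eqref{lambdat>q} on $\Vert\nabla_V u(t)\Vert_{L^2}\Vert u(t)\Vert_{L^2}$. The only difference is that you carry out the $g(f)=3f^2-2f^3$ trapping argument explicitly inside the proof (obtaining the slightly sharper constant $f_0^2$ in place of $1$), whereas the paper invokes the persistence statement \eqref{lambdat>q} established separately via the remark after Property \ref{coer}.
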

\begin{proof}
	In order to define $\delta$, we need to find the relation between $K(u(t))$ and the conservation of mass and energy. Multiplying $K(u(t))$ with $M(u(t))$ and using the assumption $2V+x\cdot \nabla V \geq 0$, we have
	\begin{equation}
		\begin{aligned}
			K(u(t))M(u(t))&=\biggl(\Vert \nabla u(t) \Vert_{L^2}^2-\frac{1}{2} \int_{\R^5}x\cdot \nabla V|u(t)|^2dx-\frac{3}{4} P(u)\biggr)\Vert u(t) \Vert_{L^2}^2\\
			&\leq \biggl(\Vert \nabla u(t) \Vert_{L^2}^2+ \int_{\R^5} V|u(t)|^2dx-\frac{3}{4} P(u)\biggr)\Vert u(t) \Vert_{L^2}^2\\
			&=\biggl(\Vert \nabla_V u(t) \Vert_{L^2}^2-\frac{3}{4} P(u)\biggr)\Vert u(t) \Vert_{L^2}^2\\
			&=3E(u)M(u)-\frac{1}{2}\Vert \nabla_V u(t) \Vert_{L^2}^2\Vert u(t) \Vert_{L^2}^2\\
			&=3E(u_0)M(u_0)-\frac{1}{2}\biggl(\Vert \nabla_V u(t) \Vert_{L^2}\Vert u(t) \Vert_{L^2} \biggr)^2
		\end{aligned}
	\end{equation}
	for all $t$ in the existence time. By \eqref{u0q}, there exists $\theta=\theta(u_0,Q)>0$ such that \[ E(u_0)M(u_0) <(1-\theta)E(Q)M(Q). \]
	Combining with \eqref{lambda0>q} and \eqref{mqeq}, we have
	\begin{equation}
		\begin{aligned}
			K(u(t))M(u(t))
			&\leq 3E(u_0)M(u_0)-\frac{1}{2}\biggl(\Vert \nabla_V u(t) \Vert_{L^2}\Vert u(t) \Vert_{L^2} \biggr)^2\\
			&\leq 3(1-\theta)\cdot \frac{1}{6}||\nabla u_0||_{L^2}^2||u_0||_{L^2}^2-\frac{1}{2}||\nabla u_0||_{L^2}^2||u_0||_{L^2}^2\\
			&=-\frac{1}{2}\theta||\nabla u_0||_{L^2}^2||u_0||_{L^2}^2
		\end{aligned}
	\end{equation}
	 for all $t$ in the existence time. Then we obtain
	 \begin{equation*}
	 	K(u(t))\leq -\frac{1}{2}\theta||\nabla u_0||_{L^2}^2\biggl(\frac{M(Q)}{M(u_0)}\biggr) =:-\delta
	 \end{equation*}
     for all $t$ in the existence time  which proves \eqref{kdelta}.
\end{proof}
\begin{remark}
    It follows from Lemma \ref{kulemma} that if $u_0 \in L^2(|x|^2dx)$, then  \eqref{blowup2} holds naturally. Thus, the corresponding solution must blow up in finite time.  However,   \eqref{blowup2} 
 can not be used directly if $u_0 \in H^1$.  Therefore, we need to think matters over carefully. In fact, we have to change the choice of  the function  $\ph (x)$. 
\end{remark}
 Now, we give the blow up criteria under the initial data $u_0 \in H^1$ instead of  $u_0 \in L^2(|x|^2dx)$.

\begin{proposition}[Blow up criteria]\label{blowupcriteria}
	For $(\mu,\gamma,d)=(-1,3,5)$ in \eqref{NLHv}, let $V: \R^5 \rightarrow \R$ satisfies  $\mathbf{(H1)}$ and $\mathbf{(H2)}$, and in addition $V \geq 0, \  x \cdot  \nabla V \in L^{\frac{5}{2}}, \  2V+ x\cdot \nabla V \geq 0$. Let $u:\left[0,T^*\right) \times \R^5 \to C$ be a $H^1$ maximal solution to \eqref{NLHv}. Assume that there exists $\delta >0 $ such that
	$		\sup_{t \in \left[0,T^*\right)} K(u(t)) \leq - \delta ,$
	where $K(u(t))$ will be proved later. Then either $T^*<+\infty$ or $T^*=+\infty$, there exists a time sequence $t_n \to +\infty$ such that $$\lim_{n \to +\infty } \Vert \nabla u(t_n)\Vert_{L^2}=\infty.$$
\end{proposition}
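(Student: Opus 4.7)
The plan is to extend the classical Glassey variance argument to initial data without finite variance by means of a localized second moment. The statement is vacuous if $T^* < +\infty$, so suppose $T^* = +\infty$ and argue by contradiction, assuming in addition that $\sup_{t \geq 0} \Vert \nabla u(t) \Vert_{L^2} < +\infty$; mass conservation then yields a uniform bound $\sup_t \Vert u(t) \Vert_{H^1} \leq C_0$. I would replace the formal weight $|x|^2$ by a smooth nonnegative radial truncation $\varphi_R(x) = R^2 \psi(|x|/R)$ with $\psi \in C^\infty([0,\infty))$ satisfying $\psi(r) = r^2$ for $r \leq 1$, $\psi$ bounded and concave on $[1,\infty)$, arranged so that $D^2 \varphi_R \leq 2\, \mathrm{Id}$ globally, $|\nabla \varphi_R| \lesssim R$, and $|\Delta^2 \varphi_R| \lesssim R^{-2}$ supported on $\{R \leq |x| \leq 2R\}$. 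The quantity $V_{\varphi_R}(t) := \int \varphi_R |u(t)|^2 dx$ is then finite and nonnegative.

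Applying Lemma \ref{viriallemma} to $\varphi_R$ and comparing with the formal $\varphi(x) = |x|^2$ case, which produces exactly $8K(u)$, I decompose
\begin{equation*}
\frac{d^2}{dt^2} V_{\varphi_R}(t) = 8 K(u(t)) + A_R(t) + B_R(t) + C_R(t) + D_R(t),
\end{equation*}
where $A_R = 4 \sum_{j,k} \Re \int \bar u_j u_k ((\varphi_R)_{jk} - 2\delta_{jk}) dx$, $B_R = -\int |u|^2 \Delta^2 \varphi_R dx$, $C_R = 2 \int (2x - \nabla \varphi_R) \cdot \nabla V |u|^2 dx$, and $D_R$ is the nonlocal Hartree remainder from replacing $\nabla \varphi_R(x) - \nabla \varphi_R(y)$ with $2(x-y)$. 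Three of these are routine under the contradiction hypothesis: the convexity condition $D^2 \varphi_R \leq 2\, \mathrm{Id}$ forces $A_R \leq 0$ and may be discarded; $|B_R| \lesssim R^{-2} M(u_0) = o_R(1)$; and for $C_R$ I use $|2x - \nabla \varphi_R| \lesssim |x| \mathbf{1}_{|x|>R}$ together with the radiality of $\varphi_R$ to reduce $|x||\nabla V|$ to $|x \cdot \nabla V|$, followed by H\"older's inequality, the Sobolev embedding from Lemma \ref{si}, and the hypothesis $x \cdot \nabla V \in L^{5/2}$, producing $|C_R(t)| \lesssim \Vert x \cdot \nabla V \Vert_{L^{5/2}(|x|>R)} \Vert u \Vert_{H^1}^2 = o_R(1)$ uniformly in $t$.

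The main obstacle is to bound $D_R$ uniformly in $t$ with explicit decay in $R$. Via the Taylor identity $\nabla \varphi_R(x) - \nabla \varphi_R(y) = \bigl(\int_0^1 D^2 \varphi_R(y + s(x-y)) ds\bigr)(x-y)$, together with $D^2 \varphi_R \leq 2\, \mathrm{Id}$ and $D^2 \varphi_R = 2\, \mathrm{Id}$ on $B_R$ (so the integrand vanishes when the segment $[y,x]$ is contained in $B_R$, which by convexity of $B_R$ is equivalent to $|x|, |y| \leq R$), one obtains
\begin{equation*}
|D_R(t)| \lesssim \iint \frac{|u(x)|^2 |u(y)|^2}{|x-y|^3} (\mathbf{1}_{|x|>R} + \mathbf{1}_{|y|>R}) dx dy.
\end{equation*}
To extract a uniform $R^{-\eps}$ rate, I insert $\mathbf{1}_{|x|>R} \leq R^{-\eps} |x|^\eps$ for small $\eps \in (0,1)$, use $|x|^\eps \lesssim |x-y|^\eps + |y|^\eps$, and estimate the two resulting integrals $\iint \frac{|u|^2 |u|^2}{|x-y|^{3-\eps}} dx dy$ and $\iint \frac{|y|^\eps |u|^2 |u|^2}{|x-y|^3} dx dy$ by Hardy--Littlewood--Sobolev; both reduce to $C_\eps \Vert u \Vert_{L^{q(\eps)}}^4$ with $q(\eps) < \tfrac{10}{3}$, hence to $C_\eps \Vert u \Vert_{H^1}^4$ by Sobolev embedding. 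This yields $|D_R(t)| \lesssim R^{-\eps} \Vert u \Vert_{H^1}^4 = o_R(1)$ uniformly in $t$.

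With all errors controlled, choose $R$ so large that $|B_R| + |C_R| + |D_R| \leq 4 \delta$ for all $t \geq 0$; since $A_R \leq 0$ this gives
\begin{equation*}
\frac{d^2}{dt^2} V_{\varphi_R}(t) \leq -8 \delta + 4 \delta = -4 \delta.
\end{equation*}
Integrating twice, $V_{\varphi_R}(t) \leq V_{\varphi_R}(0) + M_{\varphi_R}(0) t - 2 \delta t^2$, which tends to $-\infty$ as $t \to +\infty$, contradicting $V_{\varphi_R}(t) \geq 0$. Thus $\Vert \nabla u(t) \Vert_{L^2}$ cannot stay uniformly bounded on $[0, +\infty)$, and extracting a sequence $t_n \to +\infty$ along which $\Vert \nabla u(t_n) \Vert_{L^2} \to \infty$ completes the proof.
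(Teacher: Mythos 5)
Your overall architecture (localized virial weight equal to $|x|^2$ on $B_R$, decomposition into $8K(u)+\text{errors}$, sign of the Hessian term, $o_R(1)$ bounds for the bi-Laplacian and potential terms via $x\cdot\nabla V\in L^{5/2}$) matches the paper's proof, but there is a genuine gap in your treatment of the nonlocal remainder $D_R$. The claimed uniform-in-time bound $|D_R(t)|\lesssim R^{-\eps}\Vert u\Vert_{H^1}^4$ is false as an inequality about $H^1$ functions: for a fixed bump $v$ with $\Vert v\Vert_{H^1}=1$ translated to $v(\cdot-x_0)$ with $|x_0|\to\infty$, the quantity $\iint \frac{|u(x)|^2|u(y)|^2}{|x-y|^3}\mathbf{1}_{\{|x|>R\}}\,dx\,dy$ converges to the fixed positive constant $P(v)$ for every $R$, so no bound of the form $o_R(1)\cdot\Vert u\Vert_{H^1}^4$ uniform over the $H^1$ ball can hold. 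Concretely, your chain breaks at the second integral produced by the splitting $|x|^{\eps}\lesssim|x-y|^{\eps}+|y|^{\eps}$: the term $\iint \frac{|y|^{\eps}|u(x)|^2|u(y)|^2}{|x-y|^3}\,dx\,dy$ carries an unbounded weight $|y|^{\eps}$ that Hardy--Littlewood--Sobolev cannot absorb, since $u\in H^1(\R^5)$ provides no spatial decay. Because the differential inequality $\frac{d^2}{dt^2}V_{\ph_R}\leq-4\delta$ then fails to hold for all $t\geq0$, your final ``integrate twice and send $t\to\infty$'' step collapses as well.

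The missing idea is precisely the point of the Du--Wu--Zhang-type argument the paper uses: the Hartree error must be bounded by $C\Vert u(t)\Vert_{L^2(|x|>R)}\Vert u(t)\Vert_{H^1}^3$ (this step is a correct HLS/interpolation estimate), and the exterior mass $\Vert u(t)\Vert_{L^2(|x|>R)}$ is then controlled \emph{only on a finite time window} $[0,T]$ with $T=\eps R/C$, via the almost-conservation of localized mass, giving $\Vert u(t)\Vert_{L^2(|x|>R)}^2\leq o_R(1)+\eps$ there. The contradiction is then reached at time $T\sim\eps R$ by the quantitative comparison $V_{\ph_R}(T)\leq V_{\ph_R}(0)+V_{\ph_R}'(0)T-2\delta T^2$ together with $V_{\ph_R}(0)=o_R(1)R^2$ and $V_{\ph_R}'(0)=o_R(1)R$, which is why the window length must grow linearly in $R$. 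Your proof needs this additional mechanism; the rest of your error estimates ($A_R\leq0$, $B_R$, $C_R$) are sound and agree with the paper.
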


\begin{proof}

	If $T^* <+\infty$, by alternative theory,  we are done. If $T^*=+\infty$, then assume by contradiction that
	\begin{equation*}
		\sup_{t \in \left[0,+\infty\right)}\Vert \nabla u(t) \Vert_{L^2} <\infty.
	\end{equation*}
	 Let $\theta: \left[0,+\infty\right) \to [0,1]$ be a smooth function satisfying
	 \begin{equation}\label{thetar}
	 	\theta(r)=\left\{
	 	\begin{aligned}
	 		&0, \quad 0\leq r \leq \frac{1}{2},\\
	 		&1, \quad r\geq 1.\\
	 	\end{aligned}
	 	\right .
	 \end{equation}
	 For $R>0$, we define the radial function that $\ph_R(x)=\ph_R(r):=\theta(r/R)$ with $ r=|x|$ and satisfies $\nabla \ph_R(x)=\frac{x}{rR}\theta'(r/R)$ and $\Vert \nabla \ph_R \Vert_{L^\infty} \lesssim R^{-1}$.
	 Owing to Lemma \ref{viriallemma} and Lemma \ref{kulemma}, we have
	 \begin{align}\label{d2dt2}
 		& ~ \frac{d^2}{dt^2}V_{\ph_R}(t)=\frac{d}{dt}M_{\ph_R}(t)\notag \\
 		=& ~ -\int_{\R^5} \Delta^2\ph_R|u(t)|^2dx + 4\int_{\R^5}  \frac{\ph_R'(r)}{r}|\nabla u|^2dx \notag \\
 		&+4\int_{\R^5} \biggl(\frac{\ph''_R(r)}{r^2}-\frac{\ph_R'(r)}{r^3}\biggr)|x\cdot \nabla u|^2dx-2\int_{\R^5}  \frac{\ph_R'(r)}{r}x\cdot \nabla V|u|^2dx \notag \\
 		&-3\int_{\R^5}  \int_{\R^5} \frac{x-y}{|x-y|^5}|u(x)|^2|u(y)|^2\cdot \biggl(\frac{\ph_R'(r)}{r}x-\frac{\ph_R'(r)}{r}y\biggr)dxdy \notag \\
 		= & ~ 8K(u(t))-8\Vert \nabla u\Vert_{L^2(\R^5)}^2 + 4\int_{\R^5}x\cdot \nabla V|u(t)|^2dx \\
 		&+ 6\int_{\R^5} \int_{\R^5} \frac{|u(x)|^2|u(y)|^2}{|x-y|^3}dxdy-\int_{\R^5} \Delta^2\ph_R|u(t)|^2dx + 4\int_{\R^5}  \frac{\ph_R'(r)}{r}|\nabla u|^2dx \notag \\
 		&+4\int_{\R^5} \biggl(\frac{\ph''_R(r)}{r^2}-\frac{\ph_R'(r)}{r^3}\biggr)|x\cdot \nabla u|^2dx-2\int_{\R^5}  \frac{\ph_R'(r)}{r}x\cdot \nabla V|u|^2dx \notag \\
 		&-3\int_{\R^5}  \int_{\R^5} \frac{x-y}{|x-y|^5}|u(x)|^2|u(y)|^2\cdot \biggl(\frac{\ph_R'(r)}{r}x-\frac{\ph_R'(r)}{r}y\biggr)dxdy, \notag
	 \end{align}
	 where we have used
	 \begin{gather*}
	 	\partial_j=\frac{x_j}{r}\partial_r, \ \partial_{jk}^2=\biggl(\frac{\delta_{jk}}{r}-\frac{x_jx_k}{r^3}\biggr)\partial_r+\frac{x_jx_k}{r^2}\partial_r^2,\\
	 	\begin{aligned}
	 		&\sum_{j,k=1}^{5}\int \partial_{jk}^2\ph_R \Re(\partial_j\bar{u}\partial_ku)dx=\int\frac{\ph_R'(r)}{r}|\nabla u|^2dx
	 	+\int \biggl(\frac{\ph_R''(r)}{r^2}-\frac{\ph_R'(r)}{r^3}\biggr)|x\cdot \nabla u|^2dx.
	 	\end{aligned}
	 \end{gather*}
	 According to the fact that $\ph_R(r)=r^2$ on $r=|x|<R$, we have
	 \begin{align*}
 		&-8\Vert \nabla u\Vert_{L^2(|x|<R)}^2 + 4\int_{|x|<R}x\cdot \nabla V|u(t)|^2dx + 6\int_{|x|<R} \int_{|y|<R} \frac{|u(x)|^2|u(y)|^2}{|x-y|^3}dxdy\\
 		&-\int_{|x|<R} \Delta^2\ph_R|u(t)|^2dx + 4\int_{|x|<R}  \frac{\ph_R'(r)}{r}|\nabla u|^2dx\\
 		&+4\int_{|x|<R} \biggl(\frac{\ph''_R(r)}{r^2}-\frac{\ph_R'(r)}{r^3}\biggr)|x\cdot \nabla u|^2dx-2\int_{|x|<R}  \frac{\ph_R'(r)}{r}x\cdot \nabla V|u|^2dx\\
 		&-3\int_{|x|<R}  \int_{|x|<R} \frac{x-y}{|x-y|^5}|u(x)|^2|u(y)|^2\cdot \biggl(\frac{\ph_R'(r)}{r}x-\frac{\ph_R'(r)}{r}y\biggr)dxdy\\
 		=&-8\Vert \nabla u\Vert_{L^2(|x|<R)}^2 + 4\int_{|x|<R}x\cdot \nabla V|u(t)|^2dx + 6\int_{|x|<R} \int_{|y|<R} \frac{|u(x)|^2|u(y)|^2}{|x-y|^3}dxdy\\
 		&+4\int_{|x|<R}2|\nabla u|^2dx+ 4\int_{|x|<R}\biggr(\frac{2}{r^2}-\frac{2r}{r^3}\biggl)|x\cdot \nabla V|^2dx-2\int_{|x|<R}\frac{2r}{r}x\cdot \nabla V|u(t)|^2dx\\
 		&-3\int_{|x|<R}\int_{|y|<R}\frac{x-y}{|x-y|^5}|u(x)|^2|u(y)|^2\cdot \biggl(\frac{2r}{r}x-\frac{2r}{r}y\biggr)dxdy\\
 		= & ~ 0.	 		
	 \end{align*}
	 Then we can simplify \eqref{d2dt2} into the following equality
	 \begin{align*}
 		\frac{d^2}{dt^2}V_{\ph_R}(t)=& ~ 8K(u(t))-8\Vert \nabla u\Vert_{L^2(|x|>R)}^2\\ 
        &+ 4\int_{|x|>R}x\cdot \nabla V|u(t)|^2dx + 6\int_{|x|>R} \int_{\R^5} \frac{|u(x)|^2|u(y)|^2}{|x-y|^3}dxdy\\
        &-\int_{|x|>R} \Delta^2\ph_R|u(t)|^2dx + 4\int_{|x|>R}  \frac{\ph_R'(r)}{r}|\nabla u|^2dx\\
 		&+4\int_{|x|>R} \biggl(\frac{\ph''_R(r)}{r^2}-\frac{\ph_R'(r)}{r^3}\biggr)|x\cdot \nabla u|^2dx-2\int_{|x|>R}  \frac{\ph_R'(r)}{r}x\cdot \nabla V|u|^2dx\\
 		&-3\int_{|x|>R} \int_{\R^5} \frac{x-y}{|x-y|^5}|u(x)|^2|u(y)|^2\cdot \biggl(\frac{\ph_R'(r)}{r}x-\frac{\ph_R'(r)}{r}y\biggr)dxdy.
	 \end{align*}
	 Using the Cauchy-Schwarz inequality $|x \cdot \nabla u | \geq |x||\nabla u|=r|\nabla u|$ and $\ph_R''(r)\geq 2$, we get that
	 \begin{align*}
        & ~ 4\int_{|x|>R}  \frac{\ph_R'(r)}{r}|\nabla u|^2dx +4\int_{|x|>R} \biggl(\frac{\ph''_R(r)}{r^2}-\frac{\ph_R'(r)}{r^3}\biggr)|x\cdot \nabla u|^2dx-8\Vert \nabla u\Vert_{L^2(|x|>R)}^2\\
	  \leq & ~ 4\int_{|x|> R}\biggl( \frac{\ph_R'(r)}{r}-2 \biggr)|\nabla u(t)|^2dx + 4\int_{|x|> R}\frac{1}{r^2}\biggl(2-\frac{\ph_R'(r)}{r}\biggr)|x \cdot \nabla u(t)|^2dx \leq 0.	 	
    \end{align*}
	 Using the fact $\frac{\ph_R'(r)}{r} \leq 2$ and  Sobolev embedding, we get that
	 \begin{align*}
	   & ~ \biggr| 4 \int_{|x|> R} x\cdot \nabla V|u(t)|^2dx - 2\int_{|x|>R}\frac{\ph_R'(r)}{r} x \cdot \nabla V|u(t)|^2dx \biggl|\\
	   \leq & ~ 8\int_{|x|>R} |x \cdot \nabla V||u(t)|^2dx
	  \leq 8\Vert x \cdot \nabla V \Vert_{L^{\frac{5}{2}}(|x|> R)}\Vert u(t) \Vert_{L^{\frac{10}{3}}}^2\\
	  \lesssim  & ~ \Vert x\cdot \nabla V \Vert_{L^{\frac{5}{2}}(|x|> R)} \Vert u(t) \Vert_{H^1}^2  =  o_R(1)\Vert u(t) \Vert_{H^1}^2.
	 \end{align*}
	 Moreover, using Lemma \ref{holder}, Lemma \ref{hls} and Sobolev embedding, we have
	 \begin{align*}\label{6pu-3pu}
 		&~ \biggl| 6\int_{|x|>R} \int_{\R^5}\frac{|u(x)|^2|u(y)|^2}{|x-y|^3}dxdy - 3\int\int \frac{x-y}{|x-y|^5}|u(x)|^2|u(y)|^2\cdot \frac{\ph_R'(r)}{r}(x-y)dxdy \biggr|\\
 		\leq & ~ 12\biggl|\int_{|x|>R} \int_{\R^5}\frac{|u(x)|^2|u(y)|^2}{|x-y|^3}dxdy \biggr|
            \lesssim  \Vert u \Vert_{L^{\frac{5}{2}}(|x|>R)}^2 \Vert u \Vert_{H^1}^2 
            \lesssim \Vert u \Vert_{L^2(|x|>R)} \Vert u \Vert_{H^1}^3	. 	
	 \end{align*}
	 Thus, we have
	 \begin{equation}\label{d2v<u}
	 	\begin{aligned}
	 		\frac{d^2}{dt^2}V_{\ph_R}(t) \leq 8K(u(t))+o_R(1)\Vert u(t) \Vert_{H^1}^2 + C\Vert u(t) \Vert_{L^2(|x|>R)}\Vert u(t) \Vert_{H^1}^3.
	 	\end{aligned}
	 \end{equation}
	 For $(\mu,\gamma,d)=(-1,3,5)$ in \eqref{NLHv}. Assume that $u\in C(\left[0,+\infty\right),H^1)$ is a solution to \eqref{NLHv} satisfying
	 \begin{equation}\label{supdelta<infty}
	 	\sup_{t \in \left[0,+\infty\right)}\Vert \nabla u(t) \Vert_{L^2} < \infty.
	 \end{equation}
	 Then for $V_{\ph_R}(t)=\int \ph_R|u(t)|^2dx$  and $\theta$ in \eqref{thetar}, we have 
  \begin{align*}
\int_{|x|\geq R} |u(x,t)|^2 \, dx &\leq \int_{|x|\geq R} |u(x,0)|^2 \, dx + \int_0^t \left| 2 \int_{|x|\geq R} \nabla u(x,s) \cdot \nabla \overline{u(x,s)} \, dx \right| \, ds \\
&\leq \int_{|x|\geq R} |u(x,0)|^2 \, dx + 2 \int_0^t \left( \int_{|x|\geq R} |\nabla u(x,s)|^2 \, dx \right) \, ds \\
&\leq \int_{|x|\geq R} |u(x,0)|^2 \, dx + \frac{C}{R} t\\
& \leq  V_{\ph_R}(0) + \frac{C}{R} t
\end{align*}
where we have used the fact that $\int_{|x| \geq R} |\nabla u|^2 \, dx \lesssim R^{-1} $.
	 Since
	 \begin{equation*}
	 	\lim_{R \to \infty}V_{\ph_R}(0)=\lim_{R \to \infty}\int \ph_R(x)|u_0(x)|^2dx \leq \lim_{R \to \infty}\int_{|x|> \frac{R}{2}}|u_0(x)|^2dx=0,
	 \end{equation*}
	 we have  $V_{\ph_R}(0)=o_R(1)$. Hence
	 \begin{equation}\label{u<or1}
	 	\Vert u(t) \Vert_{L^2(|x|>R)}^2 \leq o_R(1)+\eps
	 \end{equation}
	 for any $\eps>0$, $R>0$ and all $t\in[0,T]$ with $T=\frac{\eps R}{C}$.
	 By \eqref{kdelta},\eqref{d2v<u}, \eqref{supdelta<infty} and \eqref{u<or1},  can be simplified as the following equation
	 \begin{equation*}
	 	\begin{aligned}
	 		\frac{d^2}{dt^2}V_{\ph_R}(t) \leq -8\delta+o_R(1) +C (o_R(1)+\eps)^{\frac{1}{2}}
	 	\end{aligned}
	 \end{equation*}
	 for all $t\in[0,T]$. Choosing $\eps>0$ small enough and $R>0$ large enough so that $$(o_R(1)+\eps)^{\frac{1}{2}}\leq 4\delta,$$
	 then we have
	 \begin{equation*}
	 	\frac{d^2}{dt^2}V_{\ph_R}(t) \leq -8\delta+4\delta=-4\delta
	 \end{equation*}
	 for all $t\in[0,T]$. It follows that
	 \begin{align*}
 		V_{\ph_R}(T)& \leq V_{\ph_R}(0)+V_{\ph_R}'(0)T-2\delta T^2 \notag \\
 		&\leq V_{\ph_R}(0)+V_{\ph_R}'(0)\frac{\eps R}{C}-2\delta (\frac{\eps R}{C})^2 \notag \\
 		&\leq o_R(1)R^2+o_R(1)\frac{\eps R^2}{C}-2\delta (\frac{\eps^2 R^2}{C^2}) \notag \\
 		&\leq 2 (o_R(1)-\frac{\delta \eps^2}{C^2})R^2 
        \leq-2 \frac{\delta \eps^2}{C^2} R^2
	 \end{align*}
	 where we have used $V_{\ph_R}(0)=o_R(1)R^2$ and $V_{\ph_R}'(0)=o_R(1)R$.
	 Taking $R>0$ large enough, we get $V_{\ph_R}(T)<0$ which is contradicted with the definition of $V_{\ph_R}(t)$ in \eqref{vt}. The proof is completed.
\end{proof}

\textbf{Statement}:
The authors have no competing interests to declare that are relevant to the content of this article.
The authors confirm that the data supporting the findings of this study are available within the article.

%
%
%
%

\end{document}